\documentclass[12pt]{amsart}
\usepackage{amsmath,amscd,amsbsy,amssymb,amsthm,amsfonts}
\usepackage{latexsym,url,bm}
\usepackage{cite,enumitem}
\usepackage{fullpage}
\usepackage{color}
\usepackage[colorlinks=true]{hyperref}
\usepackage{chngcntr}
\usepackage{apptools}

\newtheorem{theorem}{Theorem}[section]
\newtheorem{proposition}[theorem]{Proposition}
\newtheorem{lemma}[theorem]{Lemma}

\theoremstyle{remark}

\theoremstyle{definition}

\allowdisplaybreaks

\numberwithin{equation}{section}

\newcommand{\p}{\partial}

\newcommand{\R}{\mathbb{R}}
\newcommand{\N}{\mathbb{N}}
\newcommand{\Z}{\mathbb{Z}}
\newcommand{\C}{\mathbb{C}}

\renewcommand{\H}{{\mathcal H }}
\newcommand{\al}{\alpha}
\newcommand{\f}{\frac}

\newcommand{\la}{\lambda}

\newcommand{\bu}{\mathbf{u}}

\newcommand{\T}{{\mathbb T}}

\newcommand{\mA}{\mathcal{A}}
\newcommand{\mB}{\mathcal{B}}

\newcommand{\mD}{\mathcal{D}}

\newcommand{\mH}{\mathcal{H}}
\newcommand{\mK}{\mathcal{K}}
\newcommand{\mL}{\mathcal{L}}

\newcommand{\mS}{\mathcal{S}}

\newcommand{\mR}{\mathcal{R}}
\newcommand{\mM}{\mathcal{M}}

\newcommand{\mU}{\mathcal{U}}

\newcommand{\mZ}{\mathcal{Z}}

\renewcommand{\a}{\alpha}
\renewcommand{\b}{\beta}

\newcommand{\inv}{^{-1}}

\keywords{Hydroelastic waves, controllability, Nash-Moser-H\"ormander theorem. }
\subjclass[2020]{76B15, 74F10, 76B07}
\author{Lizhe Wan}
\address{Beijing International Center for Mathematical Research, Peking University}
\curraddr{}
\email{wanlizhe@pku.edu.cn}

\author{Jiaqi Yang}
\address{School of Mathematics and Statistics, Northwestern Polytechnical University}
\curraddr{}
\email{yjqmath@nwpu.edu.cn, yjqmath@163.com}

\begin{document}

\title{Exact controllability of two-dimensional hydroelastic waves}

\begin{abstract}
We prove the exact controllability of two-dimensional hydroelastic waves in the periodic setting.
We show that if the initial data and the final data are small, for exterior pressure  whose support is any non-empty open set $\omega$, the two-dimensional hydroelastic wave system is exactly controllable in arbitrary short time. 
\end{abstract}

\maketitle

\section{Introduction} \label{s:Intro}
We consider two-dimensional irrotational, inviscid and incompressible fluid, with the top surface being in contact with a frictionless thin elastic sheet.
The fluid occupies a time-dependent domain $\Omega_t \subseteq \mathbb{R}/2\pi \mathbb{Z} \times \mathbb{R}$ with a free periodic upper boundary $\Gamma_t$ and a fixed flat lower boundary $\Gamma_b = \{(x,y)|y= -h\}$ ($\Gamma_b$ is empty in the infinite depth case).
Denoting the fluid velocity by $(u(t,x,y), v(t,x,y))$, the pressure by $p(t,x,y)$, the equations inside the domain $\Omega_t$ are given by the following incompressible irrotational Euler equations:
\begin{equation*}
\left\{
             \begin{array}{lr}
            u_t +uu_x +vu_y = -p_x,&  \\
            v_t + uv_x +vv_y = -g-p_y,& \\
            u_x +v_y =0, \quad u_y -v_x = 0.&
             \end{array}
\right.
\end{equation*}
On the boundary $\Gamma_t$, we assume no surface tension. 
Instead, the dynamic boundary condition involves a nonlinear elastic restoring force together with an exterior pressure $P_{ext}$,
\begin{equation*}
    p = - \sigma\Delta_{\Gamma_t} \kappa -\f{\sigma}{2} \kappa^3+ P_{ext} \quad \text{on }\Gamma_t,
\end{equation*}
and the kinematic boundary condition is
\begin{equation*}
    \partial_t +(u,v)\cdot \nabla_{x,y} \text{ is tangent to }\Gamma_t.
\end{equation*}
Here $\kappa$ is the mean curvature of the surface $\Gamma_t$, $g> 0$ represents the gravitational acceleration, and $\sigma>0$ is the coefficient of flexural rigidity. 
In the finite depth case we also impose a boundary condition at the bottom:
\begin{equation*}
    v = 0 \quad \text{on } \Gamma_b.
\end{equation*}

This free boundary problem, commonly referred to as the two-dimensional \textbf{hydroelastic wave} system, was first introduced by Toland \cite{MR2413099} in the study of steady periodic waves.
The Cauchy problem of hydroelastic waves was studied by Ambrose and Siegel \cite{MR3656704}, Ambrose and Liu \cite{MR3608168}, as well as Wang and Yang \cite{MR4104949,MR4846707}.
More recently, a low-regularity well-posedness result was established in \cite{WanY}. 
In addition, authors \cite{WanY26} proved the well-posedness of the Muskat problem with an elastic interface. For results on solitary waves in hydroelastic waves, we refer to the works of Groves \textit{et al.} \cite{MR4733013,MR3566507}.

In this paper, we study the internal controllability of two-dimensional hydroelastic waves.
The problem can be written in the Zakharov-Craig-Sulem formulation.
Suppose the fluid domain $\Omega_t$ at time $t$ can be represented by
\begin{equation*}
    \Omega_t = \{ (x,y) \in \mathbb{T}\times\R; -h< y<\eta(t,x) \},
\end{equation*}
where $\eta(t,x): \R_t \times \R_x \rightarrow \R$ parametrizes the free surface $\Gamma_t$, and $h$ is the depth of the domain (which may also be $\infty$ in the case of infinite depth).
Without loss of generality, we assume that $\eta$ has zero mean.
We define the velocity potential $\phi$ such that $(u,v) = \nabla \phi$, where $\phi$ solves the equation
\begin{equation*}
\Delta_{x,y}\phi = 0, \quad \p_t \phi + \f12 |\nabla_{x,y} \phi|^2 + gy = p, \quad \p_y \phi |_{y = -h} = 0.
\end{equation*}
By the classical theory of elliptic equations, the problem reduces to the evolution of the velocity potential on the boundary $\Gamma_t$. 
Let \( \psi(t, x) = \phi(t, x, \eta(t, x)) \) denote the trace of the velocity potential on the free surface. 
The system can then be reformulated as:
\begin{equation}\label{HydroWave}
\begin{cases}
 \eta_t = G(\eta)\psi\,, \\
\psi_t  + g\eta +\frac{1}{2} \psi_x^2- \f12\f{(\eta_x\psi_x-G(\eta)\psi)^2}{1+\eta^2_x} +\sigma \mathbf{E}(\eta) = P_{ext}\,,
\end{cases}
\end{equation}
where $G(\eta)$ is the Dirichlet–Neumann operator defined by
\begin{equation*}
 G(\eta)\psi := \sqrt{1+\eta_x^2}\phi_n |_{y=\eta(t,x)} = (\phi_y - \eta_x \phi_x)|_{y=\eta(t,x)}\,,   
\end{equation*}
and the elastic term $\mathbf{E}(\eta)$ is given by
\begin{equation} \label{Elastic}
\begin{aligned}
    \mathbf{E}(\eta) :=&\sigma\left\{\frac{1}{\sqrt{1+\eta_x^2}} 
    \left[ \frac{1}{\sqrt{1+\eta_x^2}} 
    \left( \frac{\eta_{xx}}{(1+\eta_x^2)^{3/2}} \right)_x \right]_x
    + \frac{1}{2} \left( \frac{\eta_{xx}}{(1+\eta_x^2)^{3/2}} \right)^3\right\} \\
    =&\sigma\left\{\left(\f{\eta_{xx}}{(1+\eta^2_x)^{\f52}}\right)_{xx}+\f52\left(\f{\eta_x\eta_{xx}^2}{(1+\eta^2_x)^{\f72}}\right)_x \right\}.
\end{aligned}
\end{equation}

The hydroelastic wave system is a special kind of water wave problem with nonlinear elastic boundary condition.
The control theory for related water wave models is by now well developed, and we briefly recall the relevant results.
Reid \& Russell \cite{MR0774033} and Reid \cite{MR0846383, MR1348122} studied the controllability of linearized water wave equations at the origin.
Later, Alazard, Baldi and Han-Kwan showed in \cite{MR3776276} the controllability of the full gravity-capillary water wave system in two space dimensions using paradifferential calculus and an Ingham type inequality.
 Zhu in \cite{MR4072685} extended their result in higher dimensions using a different semiclassical approach.
Alazard  also proved the boundary observability of gravity water waves in two and three dimensions using the multiplier method \cite{MR3778651}.
In addition, Alazard obtained the stabilization results for both gravity water waves \cite{MR3801750} and gravity-capillary water waves \cite{MR3708590}.

Before stating the controllability result of the system \eqref{HydroWave}, we first briefly examine the control theory for the linearized hydroelastic waves around the null solution. 
The system is given by 
\begin{equation}
\left\{
             \begin{array}{lr}
             \partial_t \eta  = G(0) \psi, &  \\
             \p_t \psi + g\eta  + \sigma \p_x^4 \eta = P_{ext},&
             \end{array}
\right.\label{e:ZeroLinear}
\end{equation}
where $G(0) = |D|tanh(h|D|)$ ($G(0) = |D|$ in the infinite depth case). 
When $g = P_{ext} = 0$ and $h = +\infty$, the linear system \eqref{e:ZeroLinear} is well-posed in $H^{s+\f32}(\mathbb{T}) \times H^s(\mathbb{T})$ for any $s \in \R$.
We then define the product Sobolev space
\begin{equation*}
    \H^s : = H^{s+\f32}(\mathbb{T}) \times H^s(\mathbb{T}).
\end{equation*}
The inverse $G(0)^{-1}$ is well-defined for periodic functions with zero mean.
Introducing the linear operator 
\begin{equation*}
    L : = ((g+\sigma\p_x^4)G(0))^{\f12},
\end{equation*}
then $u = \psi -i LG(0)^{-1} \eta$ solves the linear dispersive equation
\begin{equation} \label{ToyModel}
    (\p_t + iL) u = P_{ext}.
\end{equation}
From Section $4.1$ of Micu and Zuazua \cite{zbMATH05046349}, we have the following \textbf{Ingham inequality} for a variant of \eqref{ToyModel}.
\begin{lemma}[\hspace{1sp}\cite{zbMATH05046349}] \label{t:Ingham}
For every $T>0$, there exists a positive constant $C_1(T)$ such that, for all $(w_n)_{n\in \Z} \in \ell^2(\Z, \C)$, $m\geq c$ for some constant $c>0$, one has the following Ingham inequality: 
Let $\mathfrak{t}(\xi)$ be a symbol that is defined in \eqref{SymbolTau}, then
\begin{equation*} 
    \int_0^T \Big| \sum_{n\in \N} w_n e^{-i\mathfrak{t}(n)t}\Big|^2 dt \geq C_1(T)\sum_{n\in \N} |w_n|^2.
\end{equation*}
\end{lemma}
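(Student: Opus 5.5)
This is an Ingham-type inequality for a superquadratic frequency sequence. I would derive it from the classical Ingham theorem together with Haraux's finite-perturbation argument, in the form used by Micu and Zuazua. The symbol $\mathfrak{t}(\xi)$ from \eqref{SymbolTau} is presumably a variant of $L$ with the parameter $m\ge c$ built in, for instance $\mathfrak{t}(\xi)=((m+\sigma\xi^4)|\xi|\tanh(h|\xi|))^{1/2}$. Such a symbol behaves like $\sqrt{\sigma}\,|\xi|^{5/2}$ at high frequency. The plan has three steps:
\begin{enumerate}
\item prove strict monotonicity and a uniform gap for the sequence $\mathfrak{t}(n)$;
\item obtain an asymptotic gap $\gamma_N\to\infty$ as $N\to\infty$;
\item combine Ingham's theorem for the tail with Haraux's argument for the finitely many low modes.
\end{enumerate}

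\emph{Step 1: monotonicity and uniform gap.} I first check that $n\mapsto \mathfrak{t}(n)$ is strictly increasing on $\N$. The map $\xi\mapsto \xi\tanh(h\xi)$ is increasing, and so is $m+\sigma\xi^4$. Hence
\[
\inf_{n\in\N}\bigl(\mathfrak{t}(n+1)-\mathfrak{t}(n)\bigr)=\gamma>0 .
\]
The condition $m\ge c$ is used here to make $\gamma$ (and the constants below) independent of $m$. Since the gap only increases with the $\xi^4$ term, a lower bound at $m=c$ should propagate.

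\emph{Step 2: asymptotic gap.} By the mean value theorem, $\mathfrak{t}(n+1)-\mathfrak{t}(n)=\mathfrak{t}'(\theta_n)$ for some $\theta_n\in(n,n+1)$. Moreover $\mathfrak{t}'(\xi)\gtrsim \xi^{3/2}$ for large $\xi$, uniformly in $m\ge c$: the term $m$ only adds a positive contribution to $\mathfrak{t}^2$, so I will estimate $(\mathfrak{t}^2)'$ from below and $\mathfrak{t}$ from above. Consequently the gaps satisfy $\gamma_N:=\inf_{n\ge N}\bigl(\mathfrak{t}(n+1)-\mathfrak{t}(n)\bigr)\to\infty$.

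\emph{Step 3: Ingham plus Haraux.} Given $T>0$, choose $N$ with $\gamma_N>2\pi/T$. Ingham's theorem then gives the inequality on $[0,T]$ for sequences supported on $n\ge N$; its constant depends only on $T$ and $\gamma_N$. To add the finitely many frequencies $\mathfrak{t}(n)$, $n<N$, I use Haraux's argument, one frequency at a time. Let $\lambda=\mathfrak{t}(k)$ be the frequency being added, and apply the averaging operator
\[
f\mapsto f(t)-\frac1\delta\int_0^\delta e^{i\lambda s}f(t+s)\,ds
\]
with $\delta$ small. It kills the new exponential and multiplies each coefficient $w_n$ by
\[
1-\frac{e^{i(\lambda-\mathfrak{t}(n))\delta}-1}{i(\lambda-\mathfrak{t}(n))\delta},
\]
a factor bounded below because all frequencies are distinct. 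The inequality on $[0,T-\delta]$ for the old family then yields it on $[0,T]$ for the enlarged family. The coefficient $w_k$ is recovered by projecting onto $e^{-i\lambda t}$, which is controlled by the left-hand side plus the already established bound. Since $\delta$ can be taken as small as $T/(2N)$, arbitrarily short $T$ is allowed.

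The main obstacle is uniformity: showing $C_1(T)$ does not depend on $m\ge c$ (and on $h$). The Haraux step gives constants depending on the low frequencies $\mathfrak{t}(0),\dots,\mathfrak{t}(N)$, which move with $m$. I would handle this in two regimes.
\begin{itemize}
\item For $m$ in a compact range, the constants depend continuously on $m$, so a uniform constant follows by compactness.
\item For large $m$, the low gaps stay at least $\gamma(c)$ while $\mathfrak{t}'$ only grows. I would try to show that $N$ can be chosen independently of $m$, which reduces the Haraux constants to a bound depending only on a lower bound for the gaps.
\end{itemize}
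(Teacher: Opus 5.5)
Your argument is correct. It is exactly the standard proof of the asymptotic-gap Ingham inequality: Ingham's theorem on a high-frequency tail with large gap, then Haraux's averaging trick to add the finitely many low frequencies one at a time. The paper does not reprove this result but imports it from Section 4.1 of Micu--Zuazua, so your route is the same.

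Two small adjustments are needed.

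\begin{itemize}
\item The symbol in \eqref{SymbolTau} is $\mathfrak{t}(n)=\sqrt{m\,n(1+\sigma n^4)}$ for $n\ge 1$: there is no $\tanh$, and $m$ is an overall factor. So every gap scales by $\sqrt{m}\ge\sqrt{c}$. The Ingham and Haraux constants depend only on $T$, $\delta$, $N$ and lower bounds for the gaps, not on where the low frequencies sit. Hence uniformity in $m\ge c$ is immediate, and your two-regime compactness discussion is unnecessary.
\item The $N$ Haraux steps consume $N\delta=T/2$ of the time interval, so the tail inequality is only available on an interval of length $T/2$. You should therefore choose $N$ with $\gamma_N>4\pi/T$ rather than $2\pi/T$.
\end{itemize}
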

The exact controllability of \eqref{ToyModel} can be established by the above Ingham inequality and the \textbf{Hilbert Uniqueness Method} (HUM).

Compared to the linearized system around the null solution, the main difficulties in proving the controllability of the full hydroelastic waves are:
\begin{enumerate}
\item The hydroelastic wave system is a quasi-linear system rather than a single dispersive equation with constant coefficients.
One cannot expect a simple step of symmetrization that turns \eqref{HydroWave} into \eqref{ToyModel}.

\item  The linearized system \eqref{e:ZeroLinear} does not have the sub-leading term of order $\f32$ and first-order transport term after symmetrization.

\item The classical Ingham inequality and harmonic analysis method only work for the linear dispersive equation with constant coefficients.
\end{enumerate}
In this paper, we overcome the above difficulties and prove the exact controllability of the full 2D hydroelastic waves.

\subsection{The main result}
Our main result asserts that, the hydroelastic wave system \eqref{HydroWave} is exactly controllable in arbitrary short time.
More precisely, we have the following result.
\begin{theorem}  \label{t:MainResult}
 Let $T>0$ be an arbitrary (possibly small) time and let $\omega\subset \T $ be a non-empty open set.
 There exist a positive universal constant $s_*$, and a small enough positive constant $\delta_*$ depending on $T$ and $\omega$ such that if $(\eta_{in}, \psi_{in}), (\eta_{end}, \psi_{end}) \in \H^{s_*}(\mathbb{T}; \R)$ with
 \begin{equation*}
     \|(\eta_{in}, \psi_{in})\|_{\H^{s_*}} +  \|(\eta_{end}, \psi_{end})\|_{\H^{s_*}} < \delta_* \,,
 \end{equation*}
 then there exists an exterior pressure $P_{ext}(t, x)$ supported on $[0,T]\times\omega$ with the following control properties:
 \begin{enumerate}  
\item The hydroelastic wave system \eqref{HydroWave} admits a unique solution
$$(\eta, \psi) \in C([0,T]; \H^{s_*})\cap C^1([0,T]; \H^{s_*-\f52})\cap C^2([0,T]; \H^{s_*-5}),$$ and $(\eta, \psi)|_{t=T} = (\eta_{end}, \psi_{end})$.
\item $P_{ext}(t, x)$ is in the function space $C([0,T]; H^{s_*})\cap C^1([0,T]; H^{s_*-\f52})\cap C^2([0,T]; H^{s_*-5})$, and satisfies the control estimates
\begin{align*}
   & \sum_{k=0}^2 \left(\|P_{ext}\|_{C^k([0,T]; H^{s_*-\f52 k})}  + \|(\eta, \psi)\|_{C^k([0,T]; \H^{s_*-\f52 k})}\right)  \lesssim    \|(\eta_{in}, \psi_{in})\|_{\H^{s_*}} +  \|(\eta_{end}, \psi_{end})\|_{\H^{s_*}}.
\end{align*}
 \end{enumerate}
\end{theorem}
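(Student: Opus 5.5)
We follow the strategy of Alazard, Baldi and Han-Kwan \cite{MR3776276} for gravity-capillary waves. The plan is to reduce Theorem \ref{t:MainResult} to the surjectivity of a nonlinear map, which we invert with the Nash--Moser--H\"ormander implicit function theorem. The nonlinear map is
\[
\Phi(\eta,\psi,P_{ext}) := \big(\mathcal{F}(\eta,\psi) - \chi_\omega P_{ext},\ (\eta,\psi)|_{t=0},\ (\eta,\psi)|_{t=T}\big),
\]
where $\mathcal{F}$ collects the left-hand sides of \eqref{HydroWave} and $\chi_\omega$ is a smooth cutoff supported in $\omega$. Since $\Phi(0)=0$, it suffices to show two things for every small $(\eta,\psi)$ in a $C^k([0,T];\H^{s-\frac52 k})$ scale. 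First, the linearized operator $\Phi'(\eta,\psi,P)$ admits a right inverse. Second, this right inverse obeys tame estimates with a fixed loss of derivatives. Nash--Moser then yields a solution with $\Phi=(0,(\eta_{in},\psi_{in}),(\eta_{end},\psi_{end}))$, together with the stated bounds. Uniqueness and the regularity statement (1) follow from the Cauchy theory of \cite{WanY}.

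\textbf{Step 1 (paralinearization and symmetrization).} We linearize \eqref{HydroWave} at a small state using Alinhac's good unknown $\omega=\psi-B\eta$, where $B$ is the vertical trace velocity. Next we paralinearize $G(\eta)$ and the elastic term $\mathbf{E}(\eta)$; the latter has principal part $\sigma T_{(1+\eta_x^2)^{-5/2}}\partial_x^4$. We then symmetrize the resulting $2\times2$ system by para-multipliers of orders $\frac32$ and $0$, as in the gravity-capillary case. The outcome is a scalar complex equation for $u$ of the form
\[
\partial_t u + T_V\partial_x u + i T_\gamma u + R u = \text{control term},
\]
with $\gamma\sim \sigma^{1/2}(1+\eta_x^2)^{-5/4}|\xi|^{5/2}$ plus a lower-order correction of order $\frac32$, and $R$ a bounded remainder.

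\textbf{Step 2 (conjugation to constant coefficients).} This is the main obstacle, corresponding to difficulties (2) and (3) in the introduction. We first apply a change of variables $x\mapsto x+\beta(t,x)$, which makes the principal symbol $x$-independent. This can be done because a suitable power $(1+\eta_x^2)^{-1/2}$ acts as a density, so the problem is equivalent to solving a transport equation for the diffeomorphism. We then apply successive pseudodifferential conjugations (Egorov-type, via flows of operators of order $\frac52-1$, $\frac52-\frac32$, \dots) to remove the $x$-dependence of the transport term and of the terms of order $\frac32$ and $1$, down to order $0$. The remaining time dependence of the constant-coefficient symbol is eliminated by a reparametrization of time. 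The result is $\partial_t v+i\mathfrak{t}(D)v=$ bounded terms, where $\mathfrak{t}(\xi)$ is the symbol \eqref{SymbolTau}: a perturbation of $\sqrt{(g+\sigma\xi^4)|\xi|\tanh(h|\xi|)}$ by a small multiple of lower order. The delicate point is keeping all conjugations tame in $(\eta,\psi)$ while controlling the loss of derivatives.

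\textbf{Step 3 (observability and HUM).} For $\partial_t+i\mathfrak{t}(D)$, the gap condition $\mathfrak{t}(n+1)-\mathfrak{t}(n)\gtrsim n^{3/2}\to\infty$ allows Lemma \ref{t:Ingham} to be used for every $T>0$. Combined with the fact that $\chi_\omega$ is nonzero on an open set, this gives observability of the adjoint problem on $[0,T]\times\omega$. We handle the order-zero remainders and the low-frequency modes (including the mean mode, where $\eta$ has zero mean) by compactness and unique continuation, which upgrades the estimate to the perturbed operator. HUM then produces the control for the linearized problem. Undoing the conjugations gives a right inverse of $\Phi'$, with tame estimates in the $C^k([0,T];H^{s-\frac52k})$ scale. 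Finally, Nash--Moser--H\"ormander applied with $s_*$ large concludes the proof.
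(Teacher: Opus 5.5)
Your overall architecture matches the paper's: Nash--Moser--H\"ormander, with a right inverse of $\Phi'$ obtained by reducing the linearized operator to $\p_t+i\mathfrak{t}(D)$ plus a remainder, then Ingham and HUM. However, your Step 2 does not reach the operator you claim. Removing the $x$-dependence of a lower-order coefficient by conjugation means solving an equation of the form $\p_x b = c-\frac{1}{2\pi}\int_\T c$. So each coefficient keeps its spatial mean, and these means are independent functions of $t$. You are left with $\p_t + i\,c_{5/2}(t)|D|^{5/2} + c_1(t)\p_x + i\,c_{1/2}(t)|D|^{1/2}$, plus any surviving order-$\frac32$ mean. A single time reparametrization normalizes only $c_{5/2}$. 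The other terms are unbounded, so they cannot go into the remainder. With them the Fourier modes evolve like $e^{-i(\mathfrak{t}(n)t+nC_1(t)+|n|^{1/2}C_{1/2}(t))}$, to which Lemma \ref{t:Ingham} does not apply. The paper handles each of these separately:
\begin{enumerate}
\item the order-$\frac32$ terms are eliminated exactly by the lower-order symmetrizer $M$ (the choice $a_{29}=0$);
\item the mean of the transport coefficient is removed by the time-dependent translation $\mathcal{T}$, $z=y+p(\tau)$;
\item the mean at order $\frac12$ is absorbed by the $\tau$-dependent phase $|j|^{1/2}\gamma_0(\tau)$ in the Fourier integral operator $A^{(1)}$.
\end{enumerate}
You need some version of these devices before Ingham can be invoked.

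Step 3 also does not go through as you set it up. Since you reduce only down to order $0$, your remainders are bounded but not compact, so a compactness--uniqueness argument does not apply. It would also require a unique continuation theorem for the variable-coefficient linearized hydroelastic operator, which you do not have. The paper uses smallness instead (Lemma \ref{t:ObserveEight}). It splits $\mathbf{h}=\mathbf{h}_1+\mathbf{h}_2$ and observes the constant-coefficient part $\mathbf{h}_1$ by Ingham's inequality. It handles the double frequencies $\mathfrak{t}(n)=\mathfrak{t}(-n)$ with an elementary estimate on a subinterval $\omega_0\subset\omega$. It then bounds $\|\mathbf{h}_2\|\lesssim e^{CT}\|\mR_8\|\,\|\mathbf{h}_T\|$, which is absorbed once $\|\mR_8\|$ is small; this is why $\delta_*$ depends on $T$ and $\omega$. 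No separate treatment of low frequencies is needed. Finally, HUM alone yields only an $\H^0$ control, whereas Theorem \ref{t:Nash} needs tame $\H^s$ bounds on the right inverse. That requires an additional regularity argument, which the paper supplies via the Dehman--Lebeau commutator/induction argument in Lemma \ref{t:ControlHs}.
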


Although one could in principle adapt the strategy of \cite{MR3776276} to establish null controllability for the system \eqref{HydroWave}, from which exact controllability would follow, we instead employ a direct Nash-Moser H\"ormander theorem approach, developed by Baldi, Haus and Montalto \cite{BHM} in the context of Hamiltonian nonlinear Schr\"odinger (NLS) equations.
As observed in \cite{WanY}, the system \eqref{HydroWave} can be rewritten as a quasilinear paradifferential equation of order $\frac{5}{2}$.
Compared to gravity–capillary water waves  of order $\f52$, the symbolic calculus and symmetrization for hydroelastic waves are substantially more involved and present additional technical challenges.
The Nash–Moser–H\"ormander approach avoids a direct reliance on paradifferential formulations and circumvents the need for a full paradifferential symbolic calculus.

The small-data assumption in Theorem  \ref{t:MainResult} is inherent to the quasilinear nature of the problem.
We expect that a suitable stabilization result for two-dimensional hydroelastic waves would allow the extension of controllability to large data, following the argument in Section 2.2 of \cite{MR3708590}.

\subsection{Ideas of the proof}
We now briefly outline the method for proving the controllability of 2D hydroelastic waves.
We rewrite the system \eqref{HydroWave} as 
\begin{equation} \label{e:hydro}
    \Phi (\mathbf{u}, f): =  \begin{pmatrix} P_1(\mathbf{u}) \\
    P_2(\mathbf{u})- \chi_\omega f \\
    \mathbf{u}(0) \\
     \mathbf{u}(T)
    \end{pmatrix} =  \begin{pmatrix} 0 \\
   0 \\
    \mathbf{u}_{in} \\
     \mathbf{u}_{end}
    \end{pmatrix},
\end{equation}
where $ \mathbf{u} = (\eta, \psi)$, $P_{ext} = \chi_\omega f$, and 
\begin{align*}
&P_1(\mathbf{u}) = \eta_t - G(\eta)\psi\,, \\
&P_2(\mathbf{u}) = \psi_t  + g\eta +\frac{1}{2} \psi_x^2- \f12\f{(\eta_x\psi_x-G(\eta)\psi)^2}{1+\eta^2_x} +\sigma \mathbf{E}(\eta).
\end{align*}
The linearized operator $\Phi'(\mathbf{u}, f )[\tilde{\mathbf{u}}, \tilde{f}]$ of $\Phi(\mathbf{u}, f)$ at $(\mathbf{u}, f)$ in the direction $(\tilde{\mathbf{u}}, \tilde{f})$ is
\begin{equation} \label{LinearDef}
\begin{aligned}
   \Phi'(\mathbf{u}, f )[\tilde{\mathbf{u}}, \tilde{f}] =  \begin{pmatrix} P_1'(\mathbf{u})[\tilde{\mathbf{u}}] \\
    P_2'(\mathbf{u})[\tilde{\mathbf{u}}]- \chi_\omega  \tilde{f} \\
    \tilde{\mathbf{u}}(0) \\
     \tilde{\mathbf{u}}(T)
    \end{pmatrix}.
\end{aligned}  
\end{equation}
We first obtain the existence of a right inverse of the linearized operator.
More precisely, we prove that, given any $(\mathbf{u}, f)$ and any $\mathbf{g} := (g_1, g_2, g_3, g_4)^T$ in suitable function spaces, there exist $(\tilde{\mathbf{u}}, \tilde{f})$ such that
\begin{equation}
    \Phi'(\mathbf{u}, f)[\tilde{\mathbf{u}}, \tilde{f}] =  \mathbf{g}.
\end{equation}
Moreover, we have to obtain appropriate estimates of $(\tilde{\mathbf{u}}, \tilde{f})$ in terms of $\mathbf{u}, f , \mathbf{g}$. 

To obtain the desired existence of the linearized control, we will first need to reduce and symmetrize the operator matrix $\mathbf{P}'(\mathbf{u}) = (P_1'(\mathbf{u}), P_2'(\mathbf{u}))^T$ in \eqref{PuTildeu} to differential operators with constant coefficients.
This is carried out in the following steps:
\begin{enumerate}
\item Conjugating $\mathbf{P}'(\mathbf{u})$ by the matrix $\mZ$ defined in \eqref{ZZeroDef}, so that by \eqref{ZConjugation}, one has $\mL_0 = \mZ^{-1} \mathbf{P}'(\mathbf{u}) \mZ $.

\item  Conjugating the matrix $\mL_0$ by the transformation $\mB$ defined in \eqref{BBInverseDef}, so that the matrix $\mL_0$ is reduced to $\mL_1$ in \eqref{LOneMatrix}.

\item  Conjugating the matrix $\mL_1$ by the transformation $\mA$ defined in \eqref{AAInverseDef}, so that the matrix $\mL_1$ is reduced to $\mL_2$ in \eqref{LTwoMatrix}.

\item Choosing matrices $P,Q$ in \eqref{PQDef}, then the matrix $\mL_2$ is reduced to $\mL_3$ in \eqref{LThreeMatrix}, which has constant coefficients at the leading $\f{5}{2}$ order terms.

\item Conjugating the matrix $\mL_3$ by the matrix operator $\mS$ defined in \eqref{SDef}, so that the matrix $\mL_4$ is symmetrized at the leading $\f{5}{2}$ order.

\item Conjugating the matrix $\mL_4$ by the matrix $M$ defined in \eqref{MDef}, so that the matrix $\mL_4$ is reduced to an anti-symmetric matrix $\mL_5$ up to order zero.
This also eliminates the sub-leading terms of order $\f32$.

\item Performing a translation of space variable $\mathcal{T}$ defined in \eqref{mTDef} to eliminate frequency zero component of order $1$ transport term.
The matrix $\mL_5$ is further reduced to $\mL_6$.

\item Conjugating the matrix $\mL_6$ by a unitary matrix $\mathcal{O}$ defined in \eqref{ODef}, so that the matrix $\mL_6$ becomes a diagonal matrix $\mL_7$ up to order zero.

\item  Using Fourier integral operators $A^{(1)}$ and $A^{(2)}$ in \eqref{UDef} to eliminate the order $1$ and $\f12$ terms in $\mL_7$.
We finally obtain the diagonal matrix $\mL_8$.
The remainder matrix $\mR_8$ is treated  perturbatively.
\end{enumerate}

Using these reductions, it is convenient for us to obtain the well-posedness and the observability of the linearized hydroelastic wave system by means of an Ingham-type inequality.
They can be obtained in a backward way in the above reduction steps.
Furthermore, one can obtain the controllability of the linearized system from the observability, the well-posedness of the Cauchy problem and HUM.
Finally, the controllability of the full nonlinear system can then be obtained from the linearized controllability by applying the \textbf{Nash-Moser-H\"ormander} Theorem \ref{t:Nash}.

We remark that using the above reduction procedure, one can also prove the local well-posedness of the two-dimensional hydroelastic waves \eqref{HydroWave} in the periodic setting as in the perturbation of Korteweg–de Vries (KdV) equation in \cite{BFH}.
However, this well-posedness result is at high regularity, and is much weaker compared to previous results in \cite{MR4104949, WanY}.
We do not prove this well-posedness result for the two-dimensional hydroelastic waves here.

The above symmetrization and reduction procedure for the linearized operator adapts the method used by Feola \& Procesi \cite{MR3360677} in the context of KAM theory for quasi-linear NLS equations. 
Similar reduction techniques were also  developed and used in the periodic setting for problems such as water waves, quasi-linear KdV, Benjamin-Ono, Kirchhoff, and incompressible Euler equations, see for instance \cite{MR2187619,MR3011291, MR3201904, MR3569244, MR3502158, MR3356988, MR3776276, MR4062430, MR3603787, MR4242904, MR4246389, MR4228858, MR4658635}.

Concerning the controllability for other quasi-linear dispersive PDEs except water waves, Baldi, Floridia \& Haus \cite{BFH,MR3711883} 
studied the internal controllability of quasi-linear perturbations of the KdV equation. 
Baldi, Haus \& Montalto \cite{BHM} and Iandoli \& Niu \cite{MR4700171} proved exact controllability for quasi-linear Hamiltonian Schrödinger equations in dimensions $d=1$ and $d\geq 2$ respectively.

The rest of this paper is organized as follows. 
In Section \ref{s:Linearized}, we compute the linearized hydroelastic wave system \eqref{PuTildeu}, in which the Dirichlet-Neumann operator is written as the leading contribution and the remainder term.
In the next two sections, we reduce the linearized hydroelastic wave system \eqref{PuTildeu} to the diagonal matrix operator $\mL_8$ with constant coefficients plus lower order remainder matrix $\mR_8$.
In Section \ref{s:Top}, the reduction is performed at the leading $\f52$ order and sub-leading $\f32$ order.
Then in Section \ref{s:Lower}, the reduction is performed until order $0$.
Using these reduction for the linearized hydroelastic waves, we prove the well-posedness of the Cauchy problem for the linearized hydroelastic waves in Section \ref{s:Well}, and obtain the observability of the linearized hydroelastic waves in Section \ref{s:Observe}.
As a consequence of above result, we prove in Section \ref{s:Control} the controllability of the linearized hydroelastic waves.
Finally, in Section \ref{s:Full}, we prove the controllability of the full hydroelastic waves by using the controllability of the linearized hydroelastic waves and the Nash-Moser-H\"ormander Theorem \ref{t:Nash}.
All auxiliary results are collected in Appendix \ref{s:Appendix}.
These include the definition of function spaces in the periodic setting, some estimates for the pseudo-differential operators, and the statement of the Nash-Moser-H\"ormander theorem.

\section{Linearized hydroelastic waves} \label{s:Linearized}
Recall that the hydroelastic waves \eqref{HydroWave} can be recast as \eqref{e:hydro}.
In this section, we compute the linearized operator $\Phi'(\mathbf{u}, f)[\tilde{\mathbf{u}}, \tilde{f}]$, and obtain an estimate for the second order derivative $\Phi''(\mathbf{u},f)[(\tilde{\mathbf{u}}_1, \tilde{f}_1), (\tilde{\mathbf{u}}_2, \tilde{f}_2)]$.

The linearized operator $\Phi'(\mathbf{u}, f )[\tilde{\mathbf{u}}, \tilde{f}]$ at the point $(\mathbf{u}, f)$ in the direction $(\tilde{\mathbf{u}}, \tilde{f})$ is given in \eqref{LinearDef}.
It suffices to compute $P_1'(\mathbf{u})[\tilde{\mathbf{u}}]$ and $P_2'(\mathbf{u})[\tilde{\mathbf{u}}]$.
We first recall the following result for the \textbf{shape derivative} of the Dirichlet-Neumann operator in Lannes \cite{MR2138139,MR3060183}.
\begin{lemma}[\hspace{1sp}\cite{MR2138139,MR3060183}]
Let $s> 1$, and $\psi \in H^\f32$.
Then the map $G(\cdot)\psi: H^{s+\f12} \rightarrow H^\f12$ is differentiable, and for any $\tilde{\eta}\in H^{s+\f12}$, 
\begin{equation} \label{ShapeDerivative}
G'(\eta)[\tilde{\eta}]\psi = \lim_{\varepsilon\rightarrow 0} \frac{1}{\varepsilon}\{G(\eta+\varepsilon\tilde{\eta})\psi - G(\eta)\psi\} = -G(\eta)(B\tilde{\eta}) - \p_x(V\tilde{\eta}),
\end{equation}
where $B$ and $V$ are the vertical and horizontal components of the velocity on $\Gamma_t$ given by
\begin{equation}\label{BVDef}
B(\eta,\psi) := \dfrac{\eta_x \psi_x + G(\eta)\psi}{1+\eta_x^2}, \quad V(\eta, \psi):=\psi_x - B\eta_x. 
\end{equation}
$B$ and $V$ satisfy the estimates, for $\tau > \f32$, 
\begin{equation*}
 \|B(\eta,\psi)\|_{H^{\tau-1}} + \|V(\eta,\psi)\|_{H^{\tau-1}} \leq C(\| \eta\|_{H^\tau})\|\psi\|_{H^\tau}.
\end{equation*}
\end{lemma}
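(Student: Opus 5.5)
The plan is to prove the formula \eqref{ShapeDerivative} by the classical variational argument, working directly with the harmonic extension and its shape derivative.

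\emph{Setup.} Let $\phi$ be the harmonic extension of $\psi$ in $\Omega = \{-h<y<\eta(x)\}$, with $\phi|_{y=\eta}=\psi$ and $\p_y\phi|_{y=-h}=0$. Let $\phi_\varepsilon$ be the analogous extension in $\Omega_\varepsilon = \{-h<y<\eta+\varepsilon\tilde\eta\}$.

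\emph{Step 1: differentiability.} To avoid moving domains, I would flatten both problems onto the fixed strip $\mathbb{T}\times(-1,0)$ via the diffeomorphism $(x,z)\mapsto (x, z(\eta+h)+\eta z\ldots)$ (any standard straightening). This turns the Dirichlet problem into a variable-coefficient elliptic problem whose coefficients depend smoothly, in fact analytically, on $\eta\in H^{s+\f12}$ with $s>1$. Elliptic regularity in $H^{s+\f12}$-coefficient spaces then shows that $\eta\mapsto\phi\circ$(flattening) is $C^1$ into $H^{2}$ of the strip, at least at the $H^{\f32}$ trace level for $\psi\in H^{\f32}$. Composing with the trace gives differentiability of $G(\cdot)\psi$ into $H^{\f12}$.

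\emph{Step 2: compute the derivative.} Write $\phi_\varepsilon = \phi + \varepsilon\dot\phi + o(\varepsilon)$ in the interior. Then $\dot\phi$ is harmonic with $\p_y\dot\phi=0$ at the bottom. Differentiating the boundary condition $\phi_\varepsilon(x,\eta+\varepsilon\tilde\eta)=\psi$ gives
\[
\dot\phi|_{y=\eta} = -\tilde\eta\,\p_y\phi|_{y=\eta} = -B\tilde\eta,
\]
using the standard identities $\p_y\phi|_{y=\eta}=B$ and $\p_x\phi|_{y=\eta}=V$, which follow from $\psi_x=\phi_x+\eta_x\phi_y$ and $G(\eta)\psi=\phi_y-\eta_x\phi_x$. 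Next, differentiate $G(\eta+\varepsilon\tilde\eta)\psi=(\p_y\phi_\varepsilon-(\eta_x+\varepsilon\tilde\eta_x)\p_x\phi_\varepsilon)|_{y=\eta+\varepsilon\tilde\eta}$ in $\varepsilon$. This produces
\[
G(\eta)(-B\tilde\eta) + \tilde\eta\,(\phi_{yy}-\eta_x\phi_{xy}) - \tilde\eta_x\phi_x .
\]
Using $\phi_{yy}=-\phi_{xx}$, the middle terms combine to $-\tilde\eta(\phi_{xx}+\eta_x\phi_{xy}) = -\tilde\eta\,\p_x(\phi_x|_{y=\eta}) = -\tilde\eta V_x$. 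Together with $-\tilde\eta_x V$, this gives $-\p_x(V\tilde\eta)$, which is \eqref{ShapeDerivative}.

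\emph{Step 3: the estimates on $B,V$.} These follow from $\|G(\eta)\psi\|_{H^{\tau-1}}\le C(\|\eta\|_{H^\tau})\|\psi\|_{H^\tau}$ for $\tau>\f32$, a standard Dirichlet–Neumann bound obtained through the same flattening. Combined with the algebra property of $H^{\tau-1}$, which holds since $\tau-1>\f12$, and composition estimates for $(1+\eta_x^2)^{-1}$, this bounds $B$. The bound for $V=\psi_x-B\eta_x$ follows the same way.

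\emph{Main obstacle.} The step I expect to be delicate is Step 1 at low regularity. The computation in Step 2 uses second derivatives of $\phi$ at the boundary, so it should first be done for smooth $\eta,\psi$ and then extended to the stated spaces by density together with the uniform bounds from Step 1.
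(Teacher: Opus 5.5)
Your argument is correct and is essentially the standard proof in the works of Lannes that the paper cites; the paper itself quotes the lemma without proof. As there, you straighten the domain to get $C^1$ dependence on $\eta$, identify the derivative through the Eulerian variation $\dot\phi$ with $\dot\phi|_{y=\eta}=-B\tilde\eta$, and your computation $\tilde\eta(\phi_{yy}-\eta_x\phi_{xy})-\tilde\eta_x\phi_x=-\p_x(V\tilde\eta)$ is right. One point to make explicit in the density step: for $\psi\in H^{\f32}$ the terms $G(\eta)(B\tilde\eta)$ and $\p_x(V\tilde\eta)$ are each only in $H^{-\f12}$, so pass the identity to the limit in $H^{-\f12}$, with the $H^{\f12}$ regularity of their sum supplied by Step 1.
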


We also have the following paralinearization for Dirichlet-Neumann operator.
\begin{lemma}[\hspace{1sp}\cite{MR3776276}] \label{t:DNParalinear}
Let $s \geq s_0$ with $s_0$ large enough.
Then there exists $\theta \in (0,1]$ such that 
\begin{equation*}
    G(\eta) = |D|(\psi - T_{B}\eta) - \p_x(T_V \eta) + F(\eta)\psi,
\end{equation*}
where $T_{B}\eta$ and $T_{V}\eta$ are the low-high paraproducts of $B\eta$ and $V\eta$, and the remainder term $F(\eta)\psi$ satisfies
\begin{equation} \label{DNRemainderEst}
\|F(\eta)\psi \|_{H^{s+\f32}} \leq C(\|\eta\|_{H^s}) \|\eta\|^\theta_{H^s} \|\psi\|_{H^s}.
\end{equation}
\end{lemma}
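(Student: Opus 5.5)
The plan is to follow the Alazard--M\'etivier paralinearization scheme, as carried out in \cite{MR3776276}. I would work in the infinite depth case first and treat the bottom afterwards.

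\textbf{Flattening and paralinearization of the interior equation.} First flatten the fluid domain with the change of variables $(x,z)\mapsto (x,z+\eta(x))$, $z<0$, and set $v(x,z)=\phi(x,z+\eta(x))$. Then $v|_{z=0}=\psi$ and $v$ solves
\[
(1+\eta_x^2)\p_z^2 v+\p_x^2 v-2\eta_x\p_x\p_z v-\eta_{xx}\p_z v=0 .
\]
In these variables
\[
G(\eta)\psi=\big((1+\eta_x^2)\p_z v-\eta_x\p_x v\big)\big|_{z=0}.
\]
Next introduce the good unknown $w=v-T_{\p_z v}\eta$. Its trace at $z=0$ is $\psi-T_B\eta$, since $\p_z v|_{z=0}=B$ by \eqref{BVDef}. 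I would then paralinearize the elliptic equation using Bony's decomposition. All products of the coefficients ($\eta_x$, $\eta_{xx}$) with derivatives of $v$ are written as paraproducts plus smoother remainders, controlled by the elliptic regularity estimate $\|\nabla_{x,z}v\|_{L^2_z H^{s-1/2}}\lesssim C(\|\eta\|_{H^s})\|\psi\|_{H^s}$. The outcome is a paradifferential equation
\[
T_{1+\eta_x^2}\p_z^2 w+\p_x^2 w-2T_{\eta_x}\p_x\p_z w-T_{\eta_{xx}}\p_z w=f,
\]
in which $f$ is smoother by roughly $3/2$ derivatives than the naive count.

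\textbf{Factorization and reading off $G(\eta)$.} I would factor the paradifferential operator as $T_{1+\eta_x^2}(\p_z-T_a)(\p_z-T_A)$, modulo smoothing errors. Here $a,A$ are the symbols
\[
a^{(1)},A^{(1)}=\frac{i\eta_x\xi\mp|\xi|}{1+\eta_x^2}
\]
at principal order, with order-zero corrections determined by symbolic calculus. The factor $\p_z-T_a$ is a backward parabolic evolution in $z<0$, so $(\p_z-T_A)w|_{z=0}$ is as smooth as $f$ allows. This yields $\p_z w|_{z=0}=T_A(\psi-T_B\eta)$ plus a remainder. Substituting into the formula for $G(\eta)\psi$, and using that $T_{\p_x v}$ at $z=0$ produces $T_V$, gives
\[
G(\eta)\psi=T_\lambda(\psi-T_B\eta)-T_V\p_x\eta+\text{remainder}.
\]
In one space dimension the principal symbol is $\lambda^{(1)}=(1+\eta_x^2)A^{(1)}-i\eta_x\xi=|\xi|$ exactly. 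The lower-order part $\lambda^{(0)}$ involves $\eta_{xx}$ and the commutator $[\p_x,T_V]$ is smoothing, so both can be absorbed into $F(\eta)\psi$ once $s\ge s_0$ is large. This gives the stated form $|D|(\psi-T_B\eta)-\p_x(T_V\eta)+F(\eta)\psi$.

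\textbf{Remainder estimate and finite depth.} The remainder estimate \eqref{DNRemainderEst} follows by tracking every remainder. Each one is a paraproduct or symbolic-calculus error that is at least linear in $\eta$ (through $\eta_x$, $\eta_{xx}$, or through $\lambda^{(0)}$) and gains derivatives by the standard bounds for $T_aT_b-T_{a\# b}$. Interpolating between a crude bound and the smoothing bound produces the factor $\|\eta\|_{H^s}^\theta$ with some $\theta\in(0,1]$. For finite depth, the bottom contributes an operator with $C^\infty$ kernel, since $|D|\tanh(h|D|)-|D|$ is infinitely smoothing; I would either take the paper's normalization as absorbing this into the constant-coefficient part, or handle it by the same elliptic argument in a strip.

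\textbf{Main obstacle.} I expect the main difficulty to be the factorization step, together with the parabolic estimate in $z$ that yields the sharp $3/2$-derivative gain with tame dependence on $\|\eta\|_{H^s}$. This step requires the elliptic regularity of $v$ uniformly up to $z=0$. I would try first to import it directly from Alazard--M\'etivier, Proposition 3.14 type, specialized to dimension one.
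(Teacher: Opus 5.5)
The paper gives no proof of this lemma. It imports it from \cite{MR3776276}, which rests on the Alazard--M\'etivier paralinearization, so your overall route (flattening, good unknown, paradifferential factorization, parabolic estimate in $z$) is the right one. The gap is in how you reach the stated form. Estimate \eqref{DNRemainderEst} requires $F(\eta)\psi$ to gain $3/2$ derivatives while $\eta$ is measured only in $H^s$. Consequently no term of order zero in $\eta$ (or in $\psi-T_B\eta$) may be put into $F(\eta)\psi$, however large $s_0$ is.

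Your claim that $[\p_x,T_V]$ is smoothing breaks this rule: $[\p_x,T_V]\eta=T_{V_x}\eta$ has exactly the regularity of $\eta$. Take $\psi$ a fixed trigonometric polynomial with $\psi_{xx}\not\equiv 0$ and $\eta=\epsilon N^{-s}\cos(Nx)$. Then $\|T_{V_x}\eta\|_{H^{s+\f32}}\sim \epsilon N^{\f32}$, while the right-hand side of \eqref{DNRemainderEst} is $O(\epsilon^\theta)$. So $-T_V\p_x\eta$ and $-\p_x(T_V\eta)$ are not interchangeable here. It also follows that the ``remainder'' in your intermediate identity $G(\eta)\psi=T_\lambda(\psi-T_B\eta)-T_V\p_x\eta+\text{remainder}$ is not smooth.

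What is missing is careful bookkeeping of every term in which $\eta$ carries the high frequency. From $v=w+T_{\p_z v}\eta$ you get $\p_z v=\p_z w+T_{\p_z^2 v}\eta$, and this last term cannot be discarded. The interior equation at $z=0$ gives $(1+\eta_x^2)\p_z^2 v|_{z=0}=-\psi_{xx}+2\eta_xB_x+\eta_{xx}B=-V_x+\eta_xB_x$. Paralinearizing $(1+\eta_x^2)\p_z v-\eta_x\p_x v$ produces the other high-$\eta$ contributions $T_{2\eta_x B}\eta_x-T_{\psi_x}\eta_x-T_{\eta_x}T_B\eta_x-T_{\eta_x}T_{B_x}\eta$. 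Adding everything yields exactly $-T_V\p_x\eta-T_{V_x}\eta=-\p_x(T_V\eta)$, up to terms that gain many derivatives once $s$ is large. This is why the lemma is written with $\p_x(T_V\eta)$.

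The same objection applies to your treatment of $\lambda^{(0)}$. If it were nonzero, $T_{\lambda^{(0)}}(\psi-T_B\eta)$ would lie only in $H^s$, and even an order $-1$ correction would gain just one derivative. The real reason it disappears is structural. The flattened operator factors exactly:
$(1+\eta_x^2)\p_z^2+\p_x^2-2\eta_x\p_x\p_z-\eta_{xx}\p_z=(\p_z+iX)(\p_z-iX)$ with $X=\p_x-\eta_x\p_z$, and $X$ commutes with $\p_z$. As a result, in your factorization the whole correction $\eta_{xx}/(1+\eta_x^2)$ goes into the symbol $a$. Meanwhile $A=A^{(1)}$ exactly, so $(1+\eta_x^2)A-i\eta_x\xi=|\xi|$ with no lower-order part (in particular $\lambda^{(0)}\equiv 0$; it does not ``involve $\eta_{xx}$''). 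You need to state and use this, rather than appeal to $s$ being large.

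Finally, finite depth cannot be absorbed as you suggest. At $\eta=0$ one has $F(0)\psi=(G(0)-|D|)\psi\neq 0$, which is incompatible with the factor $\|\eta\|_{H^s}^\theta$. As written, the lemma is therefore an infinite-depth statement; otherwise $|D|$ must be replaced by $G(0)$.
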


We then recall the estimates for second order derivative of the Dirichlet-Neumann operator from Proposition $3.25$ in Lannes \cite{MR2138139}.

\begin{lemma} [\hspace{1sp}\cite{MR2138139}] \label{t:Lannes}
Let $s\geq2$. Suppose that $\eta\in H^{s+\f32}\cap H^{\f92}$, $\psi_x\in H^{s+\f12}$. Then, for all $\tilde{\eta}_1,\tilde{\eta}_2\in H^{s+\f32}$,
\begin{align*}
\|\mathcal{G}''(\eta)[\tilde{\eta}_1,\tilde{\eta}_2]\psi\|_{H^{s+\f12}}\leq&C\left(s,\|\eta\|_{H^{\f92}},\|\psi_x\|_{H^{\f32}}\right)\Big(\|\tilde{\eta}_1\|_{H^{s+\f32}}\|\tilde{\eta}_2\|_{H^{\f52}}\\
&+\|\tilde{\eta}_2\|_{H^{s+\f32}}\|\tilde{\eta}_1\|_{H^{\f52}}+\|\tilde{\eta}_1\|_{H^{\f52}}\|\tilde{\eta}_2\|_{H^{\f52}}\left(\|\eta\|_{H^{s+\f32}}+\|\psi_x\|_{H^{s+\f12}}\right)\Big).
\end{align*}
\end{lemma}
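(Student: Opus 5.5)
The estimate in Lemma \ref{t:Lannes} concerns the second shape derivative of the Dirichlet--Neumann operator. The plan is to obtain it by differentiating the first-order formula \eqref{ShapeDerivative} once more, rather than working directly with the elliptic problem in $\Omega_t$.

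\textbf{Step 1: a formula for $\mathcal{G}''$.} Write $G'(\eta)[\tilde\eta_1]\psi = -G(\eta)(B\tilde\eta_1) - \p_x(V\tilde\eta_1)$, where $B = B(\eta,\psi)$ and $V = V(\eta,\psi)$ are given by \eqref{BVDef}. Differentiate this identity in $\eta$ in the direction $\tilde\eta_2$. This gives
\begin{equation*}
G''(\eta)[\tilde\eta_1,\tilde\eta_2]\psi = -G'(\eta)[\tilde\eta_2](B\tilde\eta_1) - G(\eta)\big(B'[\tilde\eta_2]\tilde\eta_1\big) - \p_x\big(V'[\tilde\eta_2]\tilde\eta_1\big).
\end{equation*}
Apply \eqref{ShapeDerivative} once more to the first term, with $\psi$ replaced by $B\tilde\eta_1$. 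From \eqref{BVDef}, the derivatives $B'[\tilde\eta_2]$ and $V'[\tilde\eta_2]$ are explicit rational expressions in $\eta_x,\psi_x,G(\eta)\psi$ and $\tilde\eta_{2,x}$, together with the term $G'(\eta)[\tilde\eta_2]\psi$. After these substitutions every term is a product of functions of $(\eta,\psi)$ with first derivatives of $\tilde\eta_j$, to which at most one $G(\eta)$ or $\p_x$ is applied.

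\textbf{Step 2: tame bounds on each term.} Each term is then estimated with three ingredients.
\begin{itemize}
\item The tame bound $\|G(\eta)f\|_{H^{s+\f12}} \le C(\|\eta\|_{H^{\f92}})\big(\|f\|_{H^{s+\f32}} + \|\eta\|_{H^{s+\f32}}\|f\|_{H^{\f52}}\big)$. This follows from the paralinearization in Lemma \ref{t:DNParalinear} together with \eqref{DNRemainderEst}.
\item Tame product estimates in $H^{s+\f12}$.
\item The bounds on $B$ and $V$ from the shape-derivative lemma, applied at both the high index $s+\f32$ and the low index $\f52$.
\end{itemize}
In every product, place the high Sobolev norm on exactly one factor and control the others in $H^{\f52}$ or $W^{1,\infty}$. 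The structure of the right-hand side comes from this distribution. If the high norm falls on $\tilde\eta_1$, we get $\|\tilde\eta_1\|_{H^{s+\f32}}\|\tilde\eta_2\|_{H^{\f52}}$, and symmetrically for $\tilde\eta_2$. If it falls on $\eta$ or on $\psi_x$, through $B$, $V$, or the $G(\eta)$-bound, we get $\|\tilde\eta_1\|_{H^{\f52}}\|\tilde\eta_2\|_{H^{\f52}}(\|\eta\|_{H^{s+\f32}}+\|\psi_x\|_{H^{s+\f12}})$.

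\textbf{Step 3: rigorous justification.} The formal differentiation is made rigorous by the same difference-quotient argument used for \eqref{ShapeDerivative}, applied to $G'(\cdot)[\tilde\eta_1]\psi$.

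\textbf{Main obstacle.} The hard step is the term $G'(\eta)[\tilde\eta_2](B\tilde\eta_1)$. It produces $G(\eta)\big(B(\eta,B\tilde\eta_1)\tilde\eta_2\big)$, which contains a composition of two Dirichlet--Neumann operators, and one must check that no derivative is lost. The count works as follows. The inner $G(\eta)(B\tilde\eta_1)$ costs one derivative, and the outer $G(\eta)$ costs another, so two derivatives are lost from $\tilde\eta_1$. This is consistent with the target, since passing from $\tilde\eta_1\in H^{s+\f32}$ to $H^{s+\f12}$ after multiplying by $\tilde\eta_{2}$ and by $B$-type coefficients costs exactly one net derivative.

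To make the count rigorous, the first thing to try is to use the cancellation in Lannes' formula. One writes $G(\eta)(Bf)-\p_x(Vf)$-type combinations as a commutator $[G(\eta),B]$ plus lower order terms, and bounds the commutator by one derivative less using Lemma \ref{t:DNParalinear}. This requires the $H^{\f92}$ control on $\eta$ and the $H^{\f32}$ control on $\psi_x$ that appear in the constant $C(s,\|\eta\|_{H^{\f92}},\|\psi_x\|_{H^{\f32}})$.
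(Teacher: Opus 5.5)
Your plan has a genuine gap in Step~2. Bounding each term from Step~1 separately, with the high norm on one factor, cannot give the stated estimate. Individual terms lose two derivatives of whichever $\tilde\eta_j$ is at high frequency, and three of $\psi$; only suitable sums lose one.

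Your own count in the ``Main obstacle'' already shows this. Two derivatives lost from $\tilde\eta_1\in H^{s+\frac32}$ land in $H^{s-\frac12}$, not $H^{s+\frac12}$, so the loss is not consistent with the target. The Step~1 claim that each term carries at most one $G(\eta)$ or $\partial_x$ is also false: $B'[\tilde\eta_2]$ contains $G(\eta)(B\tilde\eta_2)$.

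Nor is the problem confined to the composition you flag. At $\eta=0$ in infinite depth, the two terms are
\[
-G(\eta)(B'[\tilde\eta_2]\tilde\eta_1)=|D|\big(\tilde\eta_1|D|(\tilde\eta_2|D|\psi)\big)+|D|(\tilde\eta_1\tilde\eta_2\psi_{xx}),\qquad -\partial_x(V'[\tilde\eta_2]\tilde\eta_1)=\partial_x(\tilde\eta_1\tilde\eta_{2,x}|D|\psi).
\]
Take $\tilde\eta_1\equiv1$, $\psi=\cos x$ and $\tilde\eta_2=N^{-s-\frac32}\cos Nx$. The first term has $H^{s+\frac12}$ norm of order $N$, while the right-hand side of the lemma stays $O(1)$; only the sum of the two terms is $O(1)$. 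Likewise, take $\tilde\eta_2\equiv1$ and $f=\tilde\eta_1|D|\psi$. The two pieces $G(\eta)(B(\eta,B\tilde\eta_1)\tilde\eta_2)$ and $\partial_x(V(\eta,B\tilde\eta_1)\tilde\eta_2)$ of $-G'(\eta)[\tilde\eta_2](B\tilde\eta_1)$ are then $|D|^2 f$ and $\partial_x^2 f$. They cancel exactly, but bounded separately they need $\tilde\eta_1\in H^{s+\frac52}$ and $\psi_x\in H^{s+\frac52}$. The paper itself gives no proof here; it imports the estimate from Lannes, so your argument has to supply these cancellations itself.

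The missing ingredient is the mechanism behind the cancellations. If $\phi$ is the harmonic extension of $f$, then $B(\eta,f)=\phi_y|_{y=\eta}$ and $V(\eta,f)=\phi_x|_{y=\eta}$. Hence
\[
G(\eta)B(\eta,f)+\partial_xV(\eta,f)=(\phi_{yy}-\eta_x\phi_{xy})|_{y=\eta}+(\phi_{xx}+\eta_x\phi_{xy})|_{y=\eta}=0 .
\]
This holds exactly in infinite depth, and up to a smoothing remainder in finite depth. Commuting the direction out gives
\[
G'(\eta)[h]f=-[G(\eta),h]B(\eta,f)-h_xV(\eta,f).
\]
Differentiating the identity in $\eta$ gives $G(\eta)B'[\tilde\eta_2]+\partial_xV'[\tilde\eta_2]=-G'(\eta)[\tilde\eta_2]B$, and therefore
\[
-G(\eta)(\tilde\eta_1B'[\tilde\eta_2])-\partial_x(\tilde\eta_1V'[\tilde\eta_2])=\tilde\eta_1G'(\eta)[\tilde\eta_2]B-[G(\eta),\tilde\eta_1]B'[\tilde\eta_2]-\tilde\eta_{1,x}V'[\tilde\eta_2].
\]
These rewritings remove the extra loss in $\tilde\eta_j$. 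The relevant commutators are with the directions $\tilde\eta_j$, not the $[G(\eta),B]$ you suggest.

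To keep only $\|\psi_x\|_{H^{s+\frac12}}$, you further need $G'(\eta)[h]$ to act with order zero, not one, on $B\tilde\eta_1$ and on $B$. In one dimension this is true, because $G(\eta)$ has principal symbol $|\xi|$ for every $\eta$. Consequently the principal parts of $[G(\eta),h]B(\eta,\cdot)$ and $h_xV(\eta,\cdot)$ cancel. Proving this with tame constants requires symbolic calculus for $G(\eta)$, not just product estimates.

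Finally, the tame bound for $G(\eta)f$ you list in Step~2 does not follow from Lemma~\ref{t:DNParalinear} as stated. Its remainder constant $C(\|\eta\|_{H^s})$ depends on the high norm of $\eta$, so it is not tame.
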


According to the computation for Muskat problem with an elastic interface, in Lemma $3.1$ of \cite{WanY26}, the linearization of the elastic term is given by 
\begin{align*}
&\mathbf{E}'(\eta)[\tilde{\eta}]= \sum _{\al = 1}^4 E_{5-\al}(\eta)\p_x^{5-\al}\tilde{\eta} : = 
\f{\p_x^4\tilde{\eta}}{(1+\eta^2_x)^{\f52}}+2\big((1+\eta^2_x)^{-\f52}\big)_x\p^3_x\tilde{\eta}\\
&+\left\{((1+\eta^2_x)^{-\f52})_{xx}-\left(\f{5\eta_x\eta_{xx}}{(1+\eta_x^2)^{\f72}}\right)_x+\f{5(1-6\eta_x^2)\eta^2_{xx}}{2(1+\eta_x^2)^{\f92}}\right\}\tilde{\eta}_{xx}\\
&-\left[\left(\f{5\eta_x\eta_{xx}}{(1+\eta_x^2)^{\f72}}\right)_{xx}-\left(\f{5(1-6\eta_x^2)\eta^2_{xx}}{2(1+\eta_x^2)^{\f92}}\right)_x\right]\tilde{\eta}_{x}.
\end{align*}
It is important to note that $E_3(\eta) = 2 \p_x E_4(\eta)$.
Taking another derivative, we get that for $s \geq 2$,
\begin{equation} \label{SecondElastic}
 \|\mathbf{E}''(\eta)[\tilde{\eta}_1, \tilde{\eta}_2]\|_{H^s} \lesssim \|\tilde{\eta}_1 \|_{H^2}\|\tilde{\eta}_2 \|_{H^{s+4}} + \|\tilde{\eta}_1 \|_{H^{s+4}}\|\tilde{\eta}_2 \|_{H^2} + \|\tilde{\eta}_1 \|_{H^2}\|\tilde{\eta}_2 \|_{H^{2}} \|\eta\|_{H^{s+4}}.
\end{equation}

Hence, for $\mathbf{u} = (\eta, \psi)$, and $\tilde{\mathbf{u}} = (\tilde{\eta}, \tilde{\psi})$, using a direct computation that is similar to the computation in Section $5$ of \cite{MR3356988}, we obtain linearized operators
\begin{align*}
P_1'(\mathbf{u})[\tilde{\mathbf{u}}] =& \p_t \tilde{\eta} + \p_x(V\tilde{\eta}) - |D|(\tilde{\psi} - B\tilde{\eta}) + \mathcal{R}_G(\eta)\psi, \\
P_2'(\mathbf{u})[\tilde{\mathbf{u}}] =& \p_t \tilde{\psi} + V\p_x\tilde{\psi} - BG(\eta)\tilde{\psi} + (g+ BV_x)\tilde{\eta} + BG(\eta)(B \tilde{\eta}) \\
& +  \sigma \sum _{\al = 1}^4 E_{5-\al}(\eta)\p_x^{5-\al}\tilde{\eta},
\end{align*}
where $\mathcal{R}_G(\eta)\psi$ is a lower order remainder term that satisfies \eqref{DNRemainderEst}.
Alternatively, one can write these linearized operators in matrix form. 
\begin{equation} \label{PuTildeu}
 \begin{aligned}
 &\mathbf{P}'(\mathbf{u})[\tilde{\mathbf{u}}] = \mathbf{P}'(\mathbf{u})(\tilde{\eta}, \tilde{\psi})^T : = (P_1'(\mathbf{u})[\tilde{\mathbf{u}}], P_2'(\mathbf{u})[\tilde{\mathbf{u}}])^T\\
 =&  \begin{pmatrix}
   \p_t + V\p_x +V_x + |D|B &   -|D|+ \mathcal{R}_G(\eta) \\
    (g+BV_x)+ BG(\eta)B + \sigma \sum _{\al = 1}^4 E_{5-\al}(\eta)\p_x^{5-\al}&  \p_t + V\p_x - BG(\eta) 
    \end{pmatrix} \begin{bmatrix} \tilde{\eta} \\
    \tilde{\psi} \end{bmatrix}.
 \end{aligned}
\end{equation}

It is well-known that for water waves, instead of working with variables $(\eta, \psi)$, one may work with the \textbf{good unknown of Alinhac}, namely $(\eta, \psi - T_{B}\eta )$; see for example \cite{MR2138139, MR2805065}.
Motivated by this, $\mathbf{P}'(\mathbf{u})$ has the following conjugation structure:
\begin{equation} \label{ZConjugation}
 \mathbf{P}'(\mathbf{u}) = \mathcal{Z}\mathcal{L}_0 \mathcal{Z}^{-1},   
\end{equation}
where the matrix $\mathcal{Z}$ and its inverse $ \mathcal{Z}^{-1}$ are given by
\begin{equation} \label{ZDef}
 \mathcal{Z} : = \begin{pmatrix}
   1 &   0 \\
    B&  1 
    \end{pmatrix}, \quad \mathcal{Z}^{-1} : = \begin{pmatrix}
   1 &   0 \\
   -B&  1 
    \end{pmatrix},
\end{equation}
and the matrix $\mathcal{L}_0$ is given by
\begin{equation} \label{ZZeroDef}
 \mathcal{L}_0 : = \begin{pmatrix}
   \p_t + V\p_x + V_x  &   -|D|+ \mathcal{R}_G \\
   a + \sigma \sum _{\al = 1}^4 E_{5-\al}(\eta)\p_x^{5-\al} &  \p_t + V\p_x 
    \end{pmatrix}, \quad a = g+ B_t + VB_x.
\end{equation}
In the matrix $\mathcal{L}_0$, $a, B, V$ are periodic functions of $(\eta, \psi)$, and are thus periodic in space.
In addition, we have the bound
\begin{equation} \label{ZBoundInverse}
\|(\mZ -I)\tilde{\mathbf{u}} \|_{\H^s} + \|(\mZ^{-1} -I)\tilde{\mathbf{u}} \|_{\H^s} = \|B\tilde{\eta} \|_{H^{s+\f32}} \lesssim \| B\|_{_{H^{s+\f32}}}\|\tilde{\eta}\|_{H^{s+\f32}} \lesssim \|\psi \|_{_{H^{s+\f52}}}\|\tilde{\eta}\|_{H^{s+\f32}}.
\end{equation}

In the next two sections, we will further simplify the matrix operator $\mathcal{L}_0$.

\section{Reduction of the linearized operator at leading and sub-leading order} \label{s:Top}
In this section, we use several changes of variables to simplify the matrix operator $\mathcal{L}_0$ in \eqref{ZZeroDef}, so that the top order terms become constant-coefficient and symmetrized.
This is done using a change of space variable, a time reparametrization, a conjugation of matrices $P^{-1}$ and $Q$, and then the symmetrization of top order by conjugating $S$.
Then we symmetrize the matrix operator of lower orders.
The sub-leading terms of the matrix operator are eliminated at the same time.

\subsection{Change of space variable}
We start with an invertible time-dependent change of the space variable.
Consider a change of variables $y = x + \beta(t,x) \Leftrightarrow  x = y + \tilde{\beta}(t,y)$, where $\beta(t,x)$ is a periodic function in space with $|\beta_x|\leq \f12$.
Denote the transformation $\mB$ and its inverse $\mB^{-1}$ by
\begin{equation} \label{BBInverseDef}
    (\mathcal{B}h)(t,x) := h(t, x+ \beta(t,x)), \quad (\mathcal{B}^{-1}h)(t,y) := h(t, y+ \tilde{\beta}(t,y)).
\end{equation}
We will later choose the function $\beta$ in \eqref{beta}, so that using Moser's estimate \eqref{MoserTwo}, for each $t\in [0,T]$,
\begin{equation} \label{BetaHsEst}
 \|\beta(t,x)\|_{H^s} \lesssim \|(1+\eta_x^2)^\f12 -1\|_{H^{s-1}} \lesssim \|\eta \|_{H^s}, \quad s>1.
\end{equation}

For a given function $u$, we have the conjugation rule for $\mB$: $\mB^{-1} u \mB = (\mB^{-1} u)$. 
For the conjugation rule for $\mB$ on partial derivatives, we compute directly and use the rule $\mB\inv \p_x^{j+k} \mB = (\mB\inv \p_x^{j} \mB) (\mB\inv \p_x^{k} \mB)$.
\begin{align*}
\mB\inv \p_x \mB & = \{ \mB\inv(1 + \b_x) \} \p_y, \quad  
\mB\inv \p_{xx} \mB  = \{ \mB\inv(1 + \b_x) \}^2 \p_{yy} + (\mB\inv \b_{xx}) \p_y, 
\\
\mB\inv \p_{x}^3 \mB & = \{ \mB\inv(1 + \b_x) \}^3 \p_{y}^3 +3\{\mB\inv(1 + \b_x)\}(\mB\inv \b_{xx} )\p_{y}^2+(\mB\inv \p^3_x\b)\p_{y},
\\
\mB\inv \p_{x}^4 \mB & = \{ \mB\inv(1 + \b_x) \}^4 \p_{y}^4 +6\left\{\mB\inv(1 + \b_x)\right\}^2 (\mB\inv \b_{xx} )\p_{y}^3+4\left\{\mB\inv(1 + \b_x)\right\}(\mB\inv \p^3_x\b)\p^2_y\\
&+3(\mB\inv \b_{xx} )^2\p^2_y+(\mB\inv \p^4_x\b )\p_y,
\\
\mB\inv \p_t \mB & = \p_t + (\mB\inv \b_t) \p_y. 
\end{align*}
For the conjugation rule for $\mB$ on $|D_x| = \p_x \mH$, where $\mH$ is the Hilbert transform, we write
\begin{align*}
&\mB\inv |D_x| \mB 
= \mB\inv \p_x \mH \mB 
= (\mB\inv \p_x \mB) (\mB\inv \mH \mB)\\
=& \{ \mB\inv(1 + \b_x) \} \p_y [ \mH + (\mB\inv \mH \mB - \mH) ] = \{ \mB\inv(1 + \b_x) \} |D_y| + \mR_{\mB},   
\end{align*}
where $\mR_{\mB} := \{ \mB\inv(1 + \b_x) \} \p_y (\mB\inv \mH \mB - \mH)$ is bounded in time, regularizing in space, since by \eqref{BConjugationDiff} and \eqref{BetaHsEst}, we have for $s >0$,
\begin{align*}
 &\|\mR_{\mB} f \|_{C([0,T];H^{s+2})} \lesssim \left(1+\|\mB\inv \b_x  \|_{C([0,T];H^{s+2})}\right) \|\p_y^3 (\mB\inv \mH \mB - \mH) f \|_{H^s}\\
 \lesssim& \left(1+\|\mB\inv \b_x  \|_{C([0,T];H^{s+2})}\right)\bigl( \|\beta\|_{C([0,T];H^{5})} \|f\|_{C([0,T];H^s)} + \|\beta\|_{C([0,T];H^{s+5})} \|f\|_{C([0,T];L^2)} \bigr)\\
 \lesssim&  \|\eta\|_{C([0,T];H^{5})} \|f\|_{C([0,T];H^s)} + \|\eta\|_{C([0,T];H^{s+5})} \|f\|_{C([0,T];L^2)}.
\end{align*}

Hence, when conjugating the matrix operator $\mL_1$ by $\mB$, we obtain
\begin{equation} \label{LOneMatrix}
\mL_1 := \mB\inv \mL_0 \mB 
= \begin{pmatrix}
 \p_t + a_1 \p_y + a_2  \  & \ - a_3 |D_y| + R_1 \\ 
\sigma \sum_{j= 1}^4 a_{3+j}\p_y^{5-j} + a_8 \ & \  \p_t + a_1 \p_y
\end{pmatrix},    
\end{equation}
where coefficients are 
\begin{align*}
a_1 & := \mB\inv [  \b_t + V(1 + \b_x)], \quad 
a_2  := \mB\inv(V_x),  \quad
a_3  := \mB\inv(1 + \b_x), \quad 
a_4  := \mB\inv [E_4(1 + \b_x)^4], \\
a_5 & :=\mB\left[6E_4(1 + \b_x)^2\b_{xx}+E_3(1+\beta_x)^3\right],\\
a_6 & := \mB\inv\left[E_4\left(4(1+\beta_x)\p^3_x\beta+3\beta^2_{xx}\right)+3E_3(1+\beta_x)\beta_{xx}+E_2(1+\beta_x)^2\right],  \\
a_7 & :=\mB\inv\left[E_4\p_x^4\beta+E_3\p_x^3\beta+E_2\beta_{xx}+E_1(1+\beta_x)\right], \qquad 
a_8  := \mB\inv a,
\end{align*}
and $R_1 := - \mR_\mB - \mB\inv \mR_G \mB$.

To obtain bounds for the transformations $\mB$ and $\mB^{-1}$, we use Lemma \ref{t:BaldiLemma}, for $ s\geq 0$,
\begin{align}
 &\|\mB f \|_{C([0,T];H^s)}   \lesssim \|f\|_{C([0,T];H^s)} + \|\eta\|_{C([0,T];H^{s+2})}\|f\|_{C([0,T];L^2)},  \label{BBInverseBound}\\
 &\|\mB^{-1} f \|_{C([0,T];H^s)}   \lesssim \|f\|_{C([0,T];H^s)} + \|\eta\|_{C([0,T];H^{s+4})}\|f\|_{C([0,T];L^2)}.  \label{BBInvBoundTwo}
\end{align}
The operator $R_1$ is also regularizing in space, and for $s \geq s_0$
\begin{equation*}
\|R_{1} f \|_{C([0,T];H^{s+\f32})} \lesssim \|\eta\|^\theta_{C([0,T];H^{s+4})} \|f\|_{C([0,T];H^s)}.
\end{equation*}

\subsection{Time reparametrization and reduction at top order}
Consider a reparametrization of time in $[0,T]$,  $\tau = t + \al(t) \Leftrightarrow  t= \tau + \tilde{\al}(\tau)$, where $\al(t)$ satisfies that $\al(0)=\al(T)=0$.
Denote the transformation $\mA$ and its inverse $\mA^{-1}$ by
\begin{equation} \label{AAInverseDef}
    (\mathcal{A}h)(t,y) := h(t+\al(t), y), \quad (\mathcal{A}^{-1}h)(\tau,y) := h(\tau+ \tilde{\al}(\tau), y).
\end{equation}
We will later choose the function $\al(t)$ in \eqref{mu} and \eqref{alpha}.
One can check that
\begin{align*}
\mA\inv\p_y^k\mA=\p_y^k,\quad \mA\inv|D_y|\mA=|D_y|,\quad \mA\inv\p_t\mA=\{\mA\inv(1+\al')\}\p_{\tau}.
\end{align*}
Thus, we get the conjugation
\begin{equation} \label{LTwoMatrix}
\mL_2 :=\mA\inv\mL_1\mA=
 \begin{pmatrix}
 a_9\p_\tau+ a_{10} \p_y + a_{11}  \  & \ - a_{12}|D_y| + R_2 \\ 
\sigma \sum_{j= 1}^4 a_{12+j}\p_y^{5-j} + a_{17} \ & \  a_9\p_{\tau} + a_{10} \p_y
\end{pmatrix},
\end{equation}
where $R_2=\mathcal{A}^{-1}R_1\mA$, and  coefficients $a_i=a_i(\tau,y)$ are
\begin{align*}
a_9:=\mA\inv(1+\al'),\quad a_k:=\mA\inv(a_{k-9}),\quad k=10,...,17.
\end{align*}
$\mathcal{A}$ and $\mathcal{A}^{-1}$ preserve the Sobolev norm such that
\begin{equation} \label{AAInverseBound}
 \|\mathcal{A}h \|_{C([0,T]; H^s)} = \|\mathcal{A}^{-1}h \|_{C([0,T]; H^s)} = \|h \|_{C([0,T]; H^s)}, \quad \forall h \in C([0,T]; H^s).
\end{equation}

Given $b_1, b_2, b_3, b_4$ as functions of $(t,x)$, the system 
\[
\begin{pmatrix} 
b_1 & b_2 \\ 
b_3 & b_4 \end{pmatrix}
=
\begin{pmatrix} 
f & 0 \\ 
0 & g \end{pmatrix}
\begin{pmatrix} 
\lambda_1 & \lambda_2 \\ 
\lambda_3 & \lambda_4 \end{pmatrix}
\begin{pmatrix} 
p & 0 \\ 
0 & q \end{pmatrix}
\]
has solutions $f,g,p,q$, 
\begin{equation} \label{fgqDef}
f = \frac{b_1}{\lambda_1 p}\,, \quad 
g = \frac{b_3}{\lambda_3 p}\,, \quad
q = \frac{\lambda_1 b_2 p}{b_1 \lambda_2}\,, \quad
p = \text{any}\,,   
\end{equation}
if and only if $b_i, \lambda_i$ satisfy
\[
\frac{b_1 b_4}{b_2 b_3} \, = \frac{\lambda_1 \lambda_4}{\lambda_2 \lambda_3}\,.
\]
In particular, to have $\lambda_i$ constant  at the leading order, it is necessary that $b_1 b_4 / b_2 b_3$ is a constant at the leading order.

In order to reduce the coefficients of the leading terms of the matrix $\mL_2$ to constants, we use the above observation and look for functions $\al(t),\beta(t, x)$ such that
\begin{equation*}
m a^2_9=a_{12}a_{13},
\end{equation*}
for some constant $m\in\R$, which is equivalent to
\begin{equation*}
m(1+\al')^2=a_{3}a_{4}.
\end{equation*}
Since $a_3=\mB\inv(1+\beta_x)$, $a_4=\mB\inv[E_4(1+\beta_x)^4]$, and $\al'=\mathcal{B}^{-1}(\al')$, we then need
\begin{equation*}
m(1+\al')^2=E_4(1+\beta_x)^5,
\end{equation*}
that is, 
\begin{equation} \label{AlphaBetaEqn}
m^{\f15}(1+\al'(t))^{\f25}E_4(t,x)^{-\f15}=1+\beta_x.
\end{equation}
Integrating this equality in $dx$, and using the fact that $E_4(t,x) = (1+\eta_x^2)^{-\f52}$ we obtain that
\begin{align*}
1+\al'(t)=m^{-\f12}\left(\f{1}{2\pi}\int_0^{2\pi}(1+\eta_x^2)^{\f12}dx\right)^{-\f52}.
\end{align*}
Integrating the above equality in $dt$, we have
\begin{align}
&m=\left\{\f{1}{T}\int_0^{T}\left(\f1{2\pi}\int_0^{2\pi}(1+\eta_x^2)^{\f12}dx\right)^{-\f52}dt\right\}^2,  \label{mu} \\
&\a(t) =\int_0^t \left[ m^{-1/2} \left(\f{1}{2\pi}\int_0^{2\pi}(1+\eta_x^2)^{\f12}dx\right)^{-\f52} - 1 \right] dt.\label{alpha}
\end{align}
Set \[f(t)=\left(\f{1}{2\pi}\int_0^{2\pi}(1+\eta_x^2)^{\f12}dx\right)^{-\f52},\]
then from \eqref{mu}, one can write
\[1+\al'(t)=\f{f(t)}{\f1{T}\int_0^Tf(t)dt}.\]
Since $(1+\|\eta_x\|_{L^\infty})^{-\f52}\leq f(t)\leq 1$, we have a uniform bound 
\[
(1+\|\eta_x\|_{L^\infty_{t,x}})^{-\f52} \leq
1+\al'(t)\leq (1+\|\eta_x\|_{L^\infty_{t,x}})^{\f52}.\]

Using the equation \eqref{AlphaBetaEqn}, we get 
\begin{equation} \label{beta}
 \beta(t,x) =  \p_x\inv \left[m^{\f15}(1+\al'(t))^{\f25}(1+\eta_x^2)^{\f12} - 1 \right] , 
\end{equation}
where $\p_x\inv f$ is the primitive of $f$ in $x$, with zero spatial average.
Alternatively,
\[
\p_x\inv \, e^{ikx} = \frac{1}{ik} \, e^{ikx} \quad \forall k \in \Z \setminus \{ 0 \}, 
\qquad 
\p_x\inv \, 1 = 0.
\]

We follow the elementary observation above, with $\la_1=\la_2=\la_4=1$, $\la_3=m$, $p=1$, $b_1=a_9$, $b_2=a_{12}$, $b_3=a_{13}$, $b_4=a_9$, and set
\begin{equation} \label{PQDef}
 \begin{aligned}
&P:=\begin{pmatrix} 
a_9 & 0 \\ 
0 & a_{13}m^{-1}
\end{pmatrix},
\quad
P^{-1}=\begin{pmatrix} 
a^{-1}_9 & 0 \\ 
0 & a^{-1}_{13}m
\end{pmatrix},\\
&Q:=\begin{pmatrix} 
1 & 0 \\ 
0 & a^{-1}_{13}a_9
\end{pmatrix},
\quad
Q^{-1}=\begin{pmatrix} 
1 & 0 \\ 
0 & a_{13}a_9^{-1}
\end{pmatrix}.
\end{aligned}   
\end{equation}
According to \eqref{fgqDef}, we then have
\begin{equation}  \label{LThreeMatrix}
\mL_3:=P^{-1}\mL_2 Q
=\begin{pmatrix}
 \p_\tau+ a_{18} \p_y + a_{19}  \  & \ - |D_y|+a_{20}\mH+R_3 \\ 
\sigma m \p^4_y+m\sigma\sum_{j= 2}^4 a_{19+j}\p_y^{5-j} + m a_{24} \ & \  \p_{\tau} + a_{18} \p_y+a_{25}
\end{pmatrix},
\end{equation}
where coefficients are 
\begin{align*}
&a_{18}=\f{a_{10}}{a_9},\quad a_{19}=\f{a_{11}}{a_9},\quad a_{20}=-\f{a_{12}}{a_{9}}\left(\f{a_{12}}{a_9}\right)_y,\\
&a_{19+j}=\f{a_{12+j}}{a_{13}},\, j=2,3,4,\quad a_{25}=m\f{a_9}{a_{13}}\left(\f{a_9}{a_{12}}\right)_{\tau}+m\f{a_{10}}{a_{13}}\left(\f{a_9}{a_{12}}\right)_y,
\end{align*}
and the remainder term $R_{3}$ is given by
\begin{align*}
R_{3}=-\f{a_{12}}{a_9}\p_y\left[\mH, \f{a_9}{a_{12}}\right]+\f{1}{a_{9}}R_2\f{a_9}{a_{12}}.
\end{align*}
Note that the leading order operators in matrix $\mL_3$, namely $-|D_y|$ and $\sigma m \p_y^4$ now have constant coefficients.

Recall that by definition, constants $a_9$, $a_{13}$ and $a_{13}$ are given by
\begin{equation*}
a_9 = 1+\mA^{-1}\al', \quad a_{12} = m^\f15\mA^{-1}\mB\inv[(1+\al'(t))^{\f25}(1+\eta_x^2)^{\f12}], \quad 
a_{13}  = m^\f45\mA^{-1}\mB\inv [(1+\al'(t))^{\f85}(1+\eta_x^2)^{-\f12}].
\end{equation*}
Using Moser's estimate \eqref{MoserTwo}, we get that for $s>2$,
\begin{equation}\label{L2inv}
  \|(P^{\pm1} - \text{diag}\{1, m^{\pm\f15} \}) \tilde{u}\|_{\H^s} +  \|(Q^{\pm 1}- \text{diag}\{1, m^{\mp\f15} \})\tilde{u}\|_{\H^s} \lesssim \|\eta\|_{H^{2}}\|\tilde{u}\|_{\mathcal{H}^s} + \|\eta\|_{H^{s+2}}\|\tilde{u}\|_{\mathcal{H}^2}.
\end{equation}
Using the commutator estimate \eqref{HilbertCommutator}, the operator $R_3$ satisfies the estimate, for $s \geq s_0$,
\begin{equation*}
\|R_3 f \|_{C([0,T];H^{s+\f32})} \lesssim \|\eta\|^\theta_{C([0,T];H^{s+4})} \|f\|_{C([0,T];H^s)}.
\end{equation*}

\subsection{Symmetrization of top order}
Let $\mathfrak{g}: \R \rightarrow \R^+$ be a $C^\infty$ function such that
\begin{equation} \label{LambdaSymbolDef}
\mathfrak{g}(\xi) = \left( \f{g+ \sigma \xi^4}{|\xi|}\right)^\f12, \quad \forall |\xi| \geq \f23; \quad \mathfrak{g}(\xi) = 1, \quad \forall |\xi| \leq \f13.
\end{equation}
We define $\Lambda$ as the Fourier multiplier with symbol $\mathfrak{g}$, and 
\begin{equation} \label{SDef}
 S:=\begin{pmatrix} 
1 & 0 \\ 
0 & m^{\f12}\Lambda
\end{pmatrix},
\quad
S^{-1}=\begin{pmatrix} 
1 & 0 \\ 
0 & m^{-\f12}\Lambda^{-1}
\end{pmatrix},
\end{equation}
where the $m$ is constant in \eqref{mu}.

Writing the matrix $ \mL_3 = \begin{pmatrix} A_3 & B_3 \\ 
C_3 & D_3 \end{pmatrix}$,  we then have the conjugation 
\begin{equation*}
 S\inv \mL_3 S
= \begin{pmatrix} 
A_3 & m^{1/2} B_3 \Lambda
 \\
m^{-1/2} \Lambda^{-1} C_3 & \Lambda^{-1} D_3 \Lambda
\end{pmatrix}
=: \begin{pmatrix} A_3^+ & B_3^+ \\ 
C_3^+ & D_3^+ \end{pmatrix}.
\end{equation*}

We define the operator 
\begin{equation} \label{TDef}
    T : = \sqrt{m}|D_y|^\f12 (g+\sigma \p_y^4)^\f12.
\end{equation}
Using the symbolic expansion \eqref{SymbolExpansion}, we get that $A_3^+ = A_3$ and 
\begin{align*}
B_3^+ &= -T+ \sqrt{m}a_{20} \mH |D_y|^{-\f12} (g+\sigma \p_y^4)^\f12 + \sqrt{m} R_3 \Lambda,\\
C_3^+ &=  T+ \sqrt{m}\sigma a_{21}\Lambda^{-1} \p_y^3 +  \sqrt{m}\sigma (a_{21})_y (\Lambda^{-1})_1 \p_y^3 + \sqrt{m}a_{22} \Lambda^{-1}\p_y^2 +g\sqrt{m}\pi_0+\mR_{3,C}^+,\\
D_3^+& =  \p_\tau + a_{18}\p_y + (a_{18})_y(\Lambda^{-1})_1 \Lambda \p_y + a_{25} + \mR_{3,D}^+,
\end{align*}
where $(\Lambda^{-1})_n : =\frac{1}{i^n}$Op$(\p_\xi^n(1/\mathfrak{g}))$ is the $n$-th expansion of $\Lambda^{-1}$, and $\pi_0(h): = \frac{1}{2\pi}\int_\T h dy$ is the spatial average. 
We further expand
\begin{align*}
 (g+\sigma \p_y^4)^\f12 &=  \sqrt{\sigma} \p_y^2 + \sqrt{g}\pi_0 + \mR_\sigma, \quad
 \Lambda^{-1} = \f{1}{\sqrt{\sigma}} |D_y|^{-\f32} + O(|D_y|^{-\frac{11}{2}}), \\
 (\Lambda^{-1})_1 &= -\f{3}{2\sqrt{\sigma}} |D_y|^{-\f52}\mH + O(|D_y|^{-\frac{11}{2}}),
\end{align*}
where each remainder term of $O(|D_y|^\al)$ type is a Fourier multiplier whose symbol $g(\xi)$ admits the bound $|g(\xi)| \leq C(1+ |\xi|)^\al$ for all $\xi \in \R$ and some $C>0$ depending only on $\sigma$.

Hence, we get using $\p_y = -|D_y|\mH$,
\begin{equation} \label{SLThreeConjugation}
S\inv \mL_3 S
= \mL_4 + \mR_4 := \begin{pmatrix} A_4 & B_4 \\ 
C_4 & D_4 \end{pmatrix} +  \begin{pmatrix} 0 & \mR_{4,B} \\ 
\mR_{4,C} & \mR_{4,D} \end{pmatrix},
\end{equation}
for 
\begin{align*}
&A_4 := \p_\tau+ a_{18} \p_y + a_{19}, \quad B_4 := -T - \sqrt{m\sigma} a_{20} |D_y|^\f{3}{2}\mH, \\
&C_4 := T + \sqrt{m\sigma} a_{21} |D_y|^\f{3} {2}\mH +a_{26}|D_y|^{\f12}, \quad D_4 := \p_\tau+ a_{18} \p_y + a_{27},
\end{align*}
and $\mR_4$ is a smoothing matrix that absorbs all remainder terms.
It satisfies the bound 
\begin{equation*}
\|\mR_4 \mathbf{f} \|_{C([0,T];H^{s}\times H^s)} \lesssim \|\eta\|^\theta_{C([0,T];H^{s+4})} \|\mathbf{f}\|_{C([0,T];H^{s}\times H^s)}, \quad s \geq s_0.
\end{equation*}
The coefficients $a_{26}$ and $a_{27}$ are given by
\begin{equation*}
 a_{26} = \f32 \sqrt{m\sigma} (a_{21})_y - \f{\sqrt{m}}{\sqrt{\sigma}}a_{22} ,\quad a_{27} =  a_{25} + \f32 (a_{18})_y.
\end{equation*}

\subsection{Symmetrization of lower orders}
Next, we further symmetrize and rewrite the linearized operator, so that the sub-leading terms of order $\f32$ are eliminated.
To achieve this goal, we symmetrize the matrix operator $\mL_4 + \mR_4$ up to order zero.
Let us consider the symmetrized matrix operator 
\begin{equation}  \label{symm mL5}
\mL_5 := \begin{pmatrix} 
A_5 &  - C_5 \\ 
C_5 & A_5 \end{pmatrix}, 
\qquad 
\begin{array}{l}
A_5 :=  \p_\tau + a_{18} \p_y + a_{28} ,  
 \\
C_5 := T + a_{29} \mH |D_y|^{3/2} + a_{30} |D_y|^{1/2},
\end{array}
\end{equation}
where $a_{28}$, $a_{29}$ and $a_{30}$ are real-valued spatial-periodic functions of $(\tau, y)$.
We consider a transformation 
\begin{equation} \label{MDef}
 M = \begin{pmatrix} 
1 &  \ell \\ 
0 & v \end{pmatrix}, 
\qquad 
\begin{array}{l}
\ell = \ell_1  |D_y|^{-5/2},  
 \\
 v =  1 + v_1 \mH |D_y|^{-1}+  v_2 |D_y|^{-2},
\end{array}
\end{equation}
where $v_1$, $v_2$ and $\ell_1$  are real-valued spatial-periodic functions of $(\tau, y)$.
Using the formula \eqref{SymbolExpansion}, one can expand $|D_y|^s a$,
\begin{equation*}
|D_y|^s a = a |D_y|^s + s a_y |D_y|^{s-1} \mH 
- \frac{s(s-1)}{2} a_{yy} |D_y|^{s-2} + O(|D_y|^{s-3}).
\end{equation*}
We compute the matrix
\begin{equation*}
 \mL_4 M -M\mL_5 = \begin{pmatrix} 
A_4 -A_5 -\ell C_5 &  B_4 v + C_5 + A_4 \ell - \ell A_5 \\ 
C_4 - v C_5 & D_4 v - vA_5 + C_4 \ell \end{pmatrix}.
\end{equation*}
We will choose the coefficients so that each component of the above matrix is $O(|D_y|^{-\f12})$.

For the position $(1.1)$, we need to have
\begin{equation}\label{eq1}
a_{19} - a_{28} = \sqrt{m\sigma} \ell_1.
\end{equation}
For the position $(1,2)$, we compute
\begin{align*}
B_4 v &= -T + \sqrt{m\sigma}(v_1-a_{20}) \mH|D_y|^\f32 + \sqrt{m\sigma}\Big(\f52(v_1)_y - v_2 + a_{20}v_1\Big)|D_y|^\f12 + O(|D_y|^{-\f12}),  \\
A_4 \ell &= \ell \p_\tau + O(|D_y|^{-\f12}),\quad
\ell A_5  = \ell\p_\tau + O(|D_y|^{-\f12}).
\end{align*}
To eliminate the $O(|D_y|^{\f32})$ term and $O(|D_y|^{\f12})$ term, we must have
\begin{align}
  \sqrt{m\sigma}(v_1-a_{20}) = - a_{29},\label{eq2} \\
 \sqrt{m\sigma} \left(\f52(v_1)_y - v_2 + a_{20}v_1 \right) = - a_{30}\label{eq3}.
\end{align}
For the position $(2,1)$, we compute 
\begin{equation*}
    vC_5 = T+(a_{29}+\sqrt{m\sigma}v_1)|D_y|^{\f32}\mH+(a_{30}-a_{29}v_1+\sqrt{m\sigma}v_2)|D|^{\f12}+O(|D_y|^{-\f12}).
\end{equation*}
To eliminate the $O(|D_y|^{\f32})$ term and $O(|D_y|^{\f12})$ term, we must have the equation \eqref{eq2} and 
\begin{equation} \label{eq4}
(a_{30}-a_{29}v_1+\sqrt{m\sigma}v_2)=a_{26}.
\end{equation}
For the position $(2,2)$, we have
\begin{align*}
D_4v&= v\p_{\tau}-a_{18}\mH|D_y|+(a_{18}v_1+a_{27})+O(|D_y|^{-\f12}),\\
vA_5&= v\p_{\tau}-a_{18}\mH|D_y|+(v_1a_{18}+a_{28}) +O(|D_y|^{-\f12}),\\
C_4\ell&=\sqrt{\sigma m}\ell_1+O(|D_y|^{-\f12}).
\end{align*}
To eliminate the $O(1)$ term, we must have
\begin{equation}\label{eq5}
    a_{27}- a_{28} + \sqrt{\sigma m}\ell_1 = 0.
\end{equation}

\eqref{eq1}-\eqref{eq5} is a system of $5$ equations with $6$ unknowns $v_1,\,v_2,\,\ell_1,\,a_{28},\,a_{29},\,a_{30}$.
To better simplify the linearized operator, we set $a_{29} = 0$, so that other unknowns can then be determined, and the order $\f32$ term in the matrix $\mL_5$ are eliminated.
In \cite{BHM}, the sub-leading terms are eliminated by conjugating a multiplication function.
In our analysis, this step is integrated in the symmetrization.

From \eqref{eq1} and \eqref{eq5}, we can obtain
\begin{equation}
a_{28}=\f{a_{19}+a_{27}}{2},\quad \ell_1=\f{a_{19}-a_{27}}{2\sqrt{m\sigma}}.
\end{equation}
Since $a_{29}=0$, from \eqref{eq2}, we obtain
\begin{equation}
v_1=a_{20}.
\end{equation}
Thus, \eqref{eq3} and \eqref{eq4} become
\begin{align*}
\sqrt{m\sigma}v_2 - a_{30}=&\left(\f52(a_{20})_y + a_{20} \right),\\
\sqrt{m\sigma}v_2+a_{30}=&a_{26},
\end{align*}
which gives that
\begin{equation}
v_2=\f1{2\sqrt{m\sigma}}\left(\f52(a_{20})_y +a_{20}+a_{26}\right),\quad a_{30}=\f12\left(a_{26}-\f52(a_{20})_y -a_{20}\right).
\end{equation}
Therefore, we have determined the matrix $M$. 
Note that $M$ can be written as the identity matrix plus a small perturbation:
\begin{equation*}
    M = I + M_R := I +  \begin{pmatrix} 
0 &  \ell_1  |D_y|^{-5/2} \\ 
0 & v_1 \mH |D_y|^{-1}+  v_2 |D_y|^{-2} \end{pmatrix},
\end{equation*}
Note that $\ell_1$, $v_1$ and $v_2$ are small in the sense that 
\begin{equation*}
 \| \ell_1\|_{C([0,T]; H^s)} + \| v_1\|_{C([0,T]; H^s)} + \| v_2\|_{C([0,T]; H^s)} \lesssim \|(\eta, \psi) \|_{C([0,T]; \H^{s+2})} \ll 1.
\end{equation*}
The matrix $M$ is then invertible with inverse $M^{-1} = I + \sum_{k=1}^\infty (-1)^k M_R^k$.
We obtain the estimate
\begin{equation} \label{MpmBound}
   C_1 \| \mathbf{f}\|_{C([0,T]; H^s\times H^s)} \leq \|M^{\pm 1} \mathbf{f}\|_{C([0,T]; H^s\times H^s)} \leq C_2\| \mathbf{f}\|_{C([0,T]; H^s\times H^s)},
\end{equation}
for some constants $C_2>C_1 >0$.

Moreover, we have the conjugation
\begin{equation} \label{SLFourConjugation}
M\inv (\mL_4 + \mR_4) M
= \mL_5 + \mR_5, \quad \mR_5 : =  M\inv \mR_4 M + (M\inv \mL_4  M - \mL_5).
\end{equation}
 $\mR_5$ is a smoothing matrix that absorbs all remainder terms with order less than $0$.
It satisfies the bound, for $s \geq s_0$,
\begin{equation*}
\|\mR_5 \mathbf{f} \|_{C([0,T];H^{s}\times H^s)} \lesssim \|\eta\|^\theta_{C([0,T];H^{s+4})} \|\mathbf{f}\|_{C([0,T];H^{s}\times H^s)}.
\end{equation*}

\section{Reduction of the linearized operator at lower orders} \label{s:Lower}
In this section, we further reduce and simplify the linearized operator at order $1$ and $\f12$.
First, we use a translation of space variable $z = y + p(\tau)$ to remove the space average of the coefficient $a_{18}(\tau, y)$ of the transport term in $\mL_5$.
Then we construct a Fourier integral operator to eliminate the order $1$ and $\f12$ terms.
\subsection{Translation of space variable}
We consider the operator $\mathcal{T}$ and its inverse:
\begin{equation} \label{mTDef}
\mathcal{T} h(\tau,y):=h(\tau,y+p(\tau)),\quad \mathcal{T} ^{-1} h(\tau,z):=h(\tau,z-p(\tau)).
\end{equation}
$\mathcal{T}$ and $\mathcal{T}^{-1}$ preserve the Sobolev norm in the sense that
\begin{equation} \label{TTInverseBound}
 \|\mathcal{T}h \|_{C([0,T]; H^s)} = \|\mathcal{T}^{-1}h \|_{C([0,T]; H^s)} = \|h \|_{C([0,T]; H^s)}, \quad \forall h \in C([0,T]; H^s).
\end{equation}

One can check that $\mathcal{T}^{-1}T(D_y)\mathcal{T}=T(D_z)$, $\mathcal{T}^{-1}\p_y\mathcal{T}=\p_z$ and $\mathcal{T}^{-1}\p_\tau\mathcal{T}=\p_\tau+p'(\tau)\p_z$. Thus, we have
\begin{equation*}
\mL_6:=\mathcal{T}^{-1}\mL_5 \mathcal{T}
=\begin{pmatrix}
 \p_\tau + a_{31}\p_z + a_{32}  \  & \ -T -a_{33} |D_z|^{1/2} \\ 
T  + a_{33}|D_z|^{1/2}\ & \  \p_\tau + a_{31} \p_z+ a_{32}
\end{pmatrix},
\end{equation*}
where 
\begin{align*}
a_{31}(\tau,z)=\mathcal{T}^{-1}a_{18}(\tau,z)+p'(\tau),\quad a_{32}(\tau,z)=\mathcal{T}^{-1}a_{28}(\tau,z),\quad a_{33}(\tau,z)=\mathcal{T}^{-1}a_{30}(\tau,z).
\end{align*}
Choosing 
\begin{equation*}
p(\tau)=-\f{1}{2\pi}\int_0^\tau\int_0^{2\pi}a_{18}(s,y)\,dyds,
\end{equation*}
then the transport coefficient $a_{31}$ has zero space average:
\begin{equation*}
\int_0^{2\pi}a_{31}(\tau,z)\,dz=0.
\end{equation*}
Therefore, we get that
\begin{equation} \label{TLFiveConjugation}
\mathcal{T}\inv (\mL_5 + \mR_5) \mathcal{T}
= \mL_6 + \mR_6, \quad \mR_6 : =  \mathcal{T}\inv \mR_5 \mathcal{T}.
\end{equation}

We further choose unitary matrices $\mathcal{O}$ and $\mathcal{O}^{-1}$
\begin{equation} \label{ODef}
\mathcal{O} = \frac{1}{\sqrt{2}} \begin{pmatrix}
 1    & 1 \\ 
i &  -i
\end{pmatrix}, \quad \mathcal{O}^{-1} = \frac{1}{\sqrt{2}} \begin{pmatrix}
 1    & -i \\ 
1 &  i
\end{pmatrix}.
\end{equation}
Then we have the conjugation
\begin{equation*}
\mathcal{L}_7 :=\mathcal{O}^{-1}\mL_6 \mathcal{O}
=\begin{pmatrix}
 \p_\tau +iT+a_{31}\p_z +ia_{33}|D_z|^\f12+ a_{32}    & 0 \\ 
0 &  \p_\tau -iT+a_{31}\p_z -ia_{33}|D_z|^\f12+ a_{32}
\end{pmatrix}.
\end{equation*}
$\mathcal{L}_7$ is a diagonal matrix.
We also define the remainder matrix $\mR_7 = \mathcal{O}^{-1}\mR_6 \mathcal{O}$.

\subsection{Reduction at lower orders}
We now use a Fourier integral operator $A$ to eliminate the coefficients $a_{31}$ and $a_{33}$.
Given a spatial periodic function $h(\tau, z)$, we define the Fourier integral operator $A^{(1)}$ as
\begin{equation} \label{ADef}
h(\tau,z) = \sum_{j \in \Z} h_j(\tau) \, e^{ijz} 
\quad \mapsto \quad
A^{(1)}h(\tau, z) = \sum_{j \in \Z} h_j(\tau) \, p(\tau,z,j) \, e^{i \phi(\tau,z,j)},
\end{equation}
where the amplitude $p(\tau,z,j)$ is a spatial-periodic symbol of order zero,
the phase function $\phi(\tau,z,j)$ is of the form
\begin{equation} \label{PhaseFunction phi}
\phi(\tau,z,j) = j z + |j|^{1/2} \gamma(\tau,z),
\end{equation}
and $\gamma(\tau,z)$ is a spatial-periodic function, with $|\gamma_z(\tau,z)| < 1/2$. 
We consider the operator 
\begin{equation} \label{DDefLseven}
 \mathcal{D}_+ : = \p_\tau + iT, \quad \mL_7^{(1)}: = \p_\tau +iT+a_{31}\p_z +ia_{33}|D_z|^\f12+ a_{32}.
\end{equation}
In the following, we will show the existence of functions $p(\tau, z,j)$ and $\gamma(\tau, z)$ such that $\mL_7^{(1)}A^{(1)} - A^{(1)} \mD_+ = O(|D_z|^{-\f12})$.
Let $\mathfrak{t} : \R \to \R$ be a $C^\infty$ function such that 
\begin{equation}  \label{SymbolTau}
\mathfrak{t}(\xi) = \{ m |\xi| (1+\sigma \xi^4)\}^{1/2}, 
\quad \forall |\xi| \geq 2/3; 
\qquad 
\mathfrak{t}(\xi) = 0, 
\quad \forall |\xi| \leq 1/3,
\end{equation}
so that Op$(\mathfrak{t}) = T$ on the periodic functions. 
Using Lemma \ref{t:Composition}, we rewrite the commutator $[T,A^{(1)}]$ as
\begin{equation*}
 [T,A^{(1)}] = \sum_{n=1}^4 \f{1}{n!}\text{Op}(\p_z^n a) \circ \text{Op}(i^{-n}\p_\xi^n \mathfrak{t}) + \mR_{A1},\quad  \mR_{A1}  = O(1),
\end{equation*}
where $a$ is the symbol
\begin{equation*}
   a(\tau, z,j) = p(\tau,z,j) \, e^{i |j|^{1/2}\gamma(\tau,z)},
\end{equation*}
and $\mR_{A1}$ is the lower-order remainder term of the commutator.
We compute
\begin{align*}
\text{Op}(i^{-1}\p_{\xi}\mathfrak{t})
=&\f52\sqrt{m\sigma}|D_z|^{\f32}\mH+O(|D_z|^{-\f52}),\\
\text{Op}(i^{-2}\p_{\xi}^2\mathfrak{t}) 
=&-\f{15}{4}\sqrt{m\sigma}|D_z|^{\f12}+O(|D_z|^{-\f{7}{2}}),\\
\text{Op}(i^{-3}\p_{\xi}^3\mathfrak{t})
=&-\f{15}{8}\sqrt{m\sigma}|D_z|^{-\f12}\mH+O(|D_z|^{-\f{9}{2}}),\\
\text{Op}(i^{-4}\p_{\xi}^4\mathfrak{t})
=&-\f{15}{16}\sqrt{m\sigma}|D_z|^{-\f32}+O(|D_z|^{-\f{11}{2}}),\\
\p_za=&\{i|j|^{\f12}p\gamma_z+p_z\}e^{i|j|^{\f12}\gamma},\\
\p^2_za=&\{-i|j|p\gamma^2_z+i|j|^{\f12}(2p_z\gamma_z+p\gamma_{zz})+p_{zz}\}e^{i|j|^{\f12}\gamma},\\
\p^3_za=&\{-i|j|^{\f32}p\gamma^3_z-3|j|\beta_z(p_z\gamma_z+p\gamma_{zz})+r_1\}e^{i|j|^{\f12}\gamma},\\
\p^4_za=&|j|^2p\gamma^4_ze^{i|j|^{\f12}\gamma}+r_2,
\end{align*}
where $r_1$ and $r_2$ are functions satisfying
\begin{align*}
\|r_k(\cdot,\cdot,j)\|_{H^s}
\leq (1+|j|)^{k-\f12}\left(\|\gamma\|_{H^{s+2}}+\|p-1\|_{H^{s+2}}\right),\quad\forall s\geq1,\quad k=1,2,\quad j\in\Z.
\end{align*}
We also have
\begin{align*}
\p_zA^{(1)}h=\sum_{j\in\Z}h_j(ijp+i|j|^{\f12}\gamma_zp+p_z)e^{i\phi(\tau,z,j)}.
\end{align*}
From Lemma 12.10 of \cite{MR3356988}, we get
\begin{align*}
|D_z|^{\f12}A^{(1)}h=\sum_{j\neq0}h_j|j|^{\f12}p(\tau,z,j)e^{i\phi(\tau,z,j)}+R_{A2}h,\quad R_{A2}=O(1).
\end{align*}
The commutator $[\p_\tau,A^{(1)}]=\p_{\tau}A^{(1)}-A^{(1)}\p_{\tau}$ is
\begin{align*}
[\p_{\tau},A^{(1)}]h=\sum_{j\in\Z}h_j\left(p_{\tau}(\tau,z,j)+i|j|^{\f12}\gamma_\tau(\tau,z)p(\tau,z,j)\right).
\end{align*}
Using the expansions above, we have
\begin{align*}
\mL_7^{(1)}A^{(1)}-A^{(1)}\mathcal{D}_+=E+\mathcal{R}_{A3}, \quad \mathcal{R}_{A3}:=i\mathcal{R}_{A1}+ia_{33}\mathcal{R}_{A2},
\end{align*}
and $E$ is the operator 
\begin{align*}
Eh=\sum_{j\in\Z}h_j(t)c(\tau,z,j)e^{i\phi(\tau,z,j)},
\end{align*}
with
\begin{align*}
&c(\tau,z,j)=i\f52\sqrt{m\sigma}\text{sign}(j)\gamma_z p|j|^2 +\left(-\f{15}{8}\sqrt{m\sigma}\gamma^2_zp+\f52\sqrt{m\sigma}\text{sign}(j)p_z\right)|j|^{\f32}\\
&+\left(i\f{15}{48}\sqrt{m\sigma}\text{sign}(j)\gamma_z^3p-i\f{15}{4}\sqrt{m\sigma}\gamma_zp_z-i\f{15}{8}\sqrt{m\sigma}\gamma_{zz}p+ia_{31}\text{sign}(j)p\right)|j|\\
&+\left(-i\f{15}{8}\sqrt{m\sigma}p_{zz}+\f{15}{16}\sqrt{m\sigma}\text{sign}(j)\gamma_z(p_z\gamma_z+p\gamma_{zz})-\f{5}{2^7}p\gamma_z^4
+i\gamma_{\tau}p+ia_{33}p+ia_{31}\gamma_zp\right)|j|^{\f12}\\
&+r(\tau, z, j),
\end{align*}
where $r(\tau, z, j)$ is the remainder term of order less than or equal to zero.
We now need to choose $\gamma$ and $p$ such that $c(\tau, z, j)$ is a symbol of order less than or equal to zero. 

For the order $2$ term in $c(\tau,z,j)$, we need to have $\gamma_z p =0$.
Since we cannot have $p = 0$, it turns out that $\gamma_z = 0$, and
\begin{equation*}
    \gamma(\tau, z) = \gamma_0(\tau),
\end{equation*}
for some function $\gamma_0(\tau)$ that only depends on $\tau$ to be determined later.
We further expand symbol $p(\tau, z, j )$ as
\begin{equation*}
    p(\tau, z, j ) = \sum_{-2 \leq m \leq 0}|j|^{m/2}p^{(m)}(\tau, z, j ), \quad p^{(m)}\text{ is bounded in } j.
\end{equation*}
Using the fact that $\gamma_z = 0$, we rewrite $c(\tau, z, j)$ as
\begin{align*}
 &c(\tau, z, j) =   \left(\f52\sqrt{m\sigma}\text{sign}(j)p_z^{(0)}\right)|j|^{\f32} + \left(\f52\sqrt{m\sigma}\text{sign}(j)p_z^{(-1)} +ia_{31}\text{sign}(j)p^{(0)}\right)|j| \\
 +&  \left(\f52\sqrt{m\sigma}\text{sign}(j)p_z^{(-2)} +ia_{31}\text{sign}(j)p^{(-1)} -i\f{15}{8}\sqrt{m\sigma}p_{zz}^{(0)}+i\gamma_{\tau}p^{(0)}+ia_{33}p^{(0)}\right)|j|^{\f12} + \tilde{r}(\tau, z, j),
\end{align*}
where $\tilde{r}(\tau, z, j)$ is the remainder term of order less than or equal to zero.

For the order $\f32$ term in $c(\tau,z,j)$, we need to have $\f52\sqrt{m\sigma}\text{sign}(j)p_z^{(0)}=0$. 
For simplicity, we take $p^{(0)}=1$.
For the order $1$ term in $c(\tau,z,j)$, we  have \[\f5{2}\sqrt{m\sigma}\text{sign}(j)p_z^{(-1)} +ia_{31}\text{sign}(j)p^{(0)}=0.\]
To this end, we take $p^{(-1)}=-i\f2{5\sqrt{m\sigma}}\p_z^{-1}a_{31}$.
Note that we have chosen $a_{31}$ such that it does not have zero frequency component. 
Hence, $\p_z^{-1}a_{31}$ is well-defined.
For the order $\f12$ term in $c(\tau,z,j)$, we should have 
\[\f52\sqrt{m\sigma}\text{sign}(j)p_z^{(-2)} +\f{2}{5\sqrt{m\sigma}}a_{31}\text{sign}(j)\p_z^{-1}a_{31}+i(\gamma_{\tau}+a_{33})=0.\] 
The above equation has a solution  if and only if
\begin{align*}
\int_0^{2\pi}\left(\f2{5\sqrt{m\sigma}}a_{31}\text{sign}(j)\p_z^{-1}a_{31}+i(\p_\tau\gamma_0+a_{33})\right)dz=0.
\end{align*}
We choose 
\[
\gamma_0(\tau)=\f{i}{2\pi}\int_0^{\tau}\int_0^{2\pi}\left(\f{2}{5\sqrt{m\sigma}}a_{31}\text{sign}(j)\p_z^{-1}a_{31}+ia_{33}\right)dzds,\]
so that the above condition is satisfied. 
Then, we obtain the expression for $p^{(-2)}$:
\begin{align*}
p^{(-2)}=-\f2{5\sqrt{m\sigma}}\p^{-1}_z\left(\f{2}{5\sqrt{m\sigma}}a_{31}\text{sign}(j)\p_z^{-1}a_{31}+i(\gamma_{\tau}+a_{33})\right).
\end{align*}

Hence, we have constructed the operator $A^{(1)}$.
In particular, for each $\tau\in [0,T]$, 
\begin{equation} \label{PhaseEstimate}
 \|\gamma(\tau, \cdot)\|_{H^{s}} + \sup_{j \in \Z}\|(p-1)(\tau, \cdot, j) \|_{H^{s}} \lesssim \|(\eta,\psi)\|_{\H^s}
\end{equation}
Hence, using Lemma \ref{t:Einv}, the operator $A^{(1)}$ is invertible.
Moreover, 
\begin{equation*}
(A^{(1)})^{-1} \mL_7^{(1)}A^{(1)} = \mD_+ + R_{8}^+, 
\end{equation*}
where the remainder term $R_{8}^+$ satisfies 
\begin{equation} \label{REightEst}
 \|R_{8}^+ f \|_{H^s} \lesssim \|(\eta, \psi)\|_{\H^2} \|f\|_{H^s} + \|(\eta, \psi)\|_{\H^{s}} \|f\|_{H^2}.
\end{equation}
Similarly, one can also construct an invertible operator $A^{(2)}$ such that for 
\begin{equation*}
 \mD_- = \p_\tau - iT, \quad  \mL_7^{(2)} = \p_\tau -iT+a_{31}\p_z -ia_{33}|D_z|^\f12+ a_{32},   
\end{equation*}
one has the conjugation property 
\begin{equation*}
(A^{(2)})^{-1} \mL_7^{(2)}A^{(2)} = \mD_- + R_{8}^-, 
\end{equation*}
where the remainder term $R_{8}^-$ satisfies the same estimate as \eqref{REightEst}.
Setting the matrix operators
\begin{equation} \label{UDef}
\mathcal{U} =  \begin{pmatrix}
 A^{(1)}    & 0 \\ 
0 &  A^{(2)}
\end{pmatrix}, \quad \mathcal{U}^{-1} =  \begin{pmatrix}
  (A^{(1)})^{-1}    & 0 \\ 
0 &   (A^{(2)})^{-1}
\end{pmatrix}, \quad \mL_8 =  \begin{pmatrix}
  \mD_+    & 0 \\ 
0 &   \mD_-
\end{pmatrix},
\end{equation}
we then have the conjugation
\begin{equation} \label{mL78Conjugation}
 \mL_8 + \mR_8 = \mU^{-1}(\mL_7 +\mR_7)\mU, \quad \mR_8 := \mU^{-1}\mR_7\mU + (\mU^{-1}\mL_7 \mU- \mL_8).
\end{equation}
The remainder matrix $\mR_8$ satisfies for $s \geq s_0$,
\begin{equation} \label{mREightEst}
 \|\mR_8 \mathbf{f} \|_{C([0,T];H^{s}\times H^s)} \lesssim \|(\eta, \psi)\|^\theta_{C([0,T];\H^{s+\f52})} \|\mathbf{f}\|_{C([0,T];H^{s}\times H^s)}.
\end{equation}

\section{Well-posedness of the linear system} \label{s:Well}
In this section, we prove the well-posedness of the linear system 
\begin{equation} \label{CauchyZero}
    \mathbf{P}'(\mathbf{u})[\tilde{\mathbf{u}}]= \tilde{\mathbf{f}}, \quad \tilde{\mathbf{u}}|_{t=0} = (\tilde{\eta}_{in}, \tilde{\psi}_{in})^T,
\end{equation}
where the $\mathbf{P}'(\mathbf{u})[\tilde{\mathbf{u}}]$ is defined in \eqref{PuTildeu}.
We assume that
\begin{equation*}
\|(\eta, \psi)\|_{C([0,T]; \H^{s+\f52})} + \| \tilde{\mathbf{f}}\|_{C([0,T];\H^s)}<\delta, 
\end{equation*}
for some small constant $\delta$.
While it is possible to prove the well-posedness of this linear system directly, we will make use of the result in the reductions of $\mathbf{P}'(\mathbf{u})[\tilde{\mathbf{u}}]$ in previous two sections.

We start from the linear system 
\begin{equation} \label{CauchyEight}
  (\mL_8 + \mR_8)\mathbf{h} = \mathbf{f}_8, \quad \mathbf{h}|_{\tau=0} = \mathbf{h}_{in}. 
\end{equation}
By \eqref{mREightEst}, we get a bound for the remainder matrix operator $\mR_8$.
For $s \geq s_0$,
\begin{equation*}
 \|\mR_8 \mathbf{f} \|_{C([0,T];H^{s}\times H^s)} \lesssim \delta^\theta \|\mathbf{f}\|_{C([0,T];H^{s}\times H^s)}.
\end{equation*}
We prove the following well-posedness result of the system \eqref{CauchyEight}.
\begin{lemma} \label{t:CauchyEight}
Let $T>0$, and $s_1\geq 0$.
Suppose $\mathbf{h}_{in} \in H^{s_1}(\T)\times H^{s_1}(\T)$, $\mathbf{f}_8\in C([0,T];H^{s_1}(\T)\times H^{s_1}(\T))$.
There exists $\varepsilon_8>0$ depending on $T$ such that if 
\begin{equation*}
 \|\mR_8 \mathbf{f} \|_{C([0,T];H^{s_1}\times H^{s_1})} \lesssim \varepsilon_8 \|\mathbf{f}\|_{C([0,T];H^{s_1}\times H^{s_1})},
\end{equation*}
there exists a unique solution $\mathbf{h}\in C([0,T];H^{s_1}\times H^{s_1})$ of the Cauchy problem \eqref{CauchyEight} satisfying the estimate:
For $s\in [0, s_1]$, 
\begin{equation} \label{mR8EnergyEst}
 \| \mathbf{h}\|_{C([0,T]; H^s\times H^s)} \lesssim_s e^{CT}\left(\|\mathbf{h}_{in} \|_{H^s\times H^s} + \| \mathbf{f}_8\|_{C([0,T]; H^s\times H^s)}\right), 
\end{equation}
where the constant $C$ depends on $\varepsilon_8$.
\end{lemma}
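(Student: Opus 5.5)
We treat \eqref{CauchyEight} as a perturbation of the constant-coefficient diagonal system $\mL_8\mathbf{h}=\mathbf{f}$ and close a fixed point argument in $C([0,T];H^{s}\times H^{s})$. The operators $\mD_\pm=\p_\tau\pm iT$ are Fourier multipliers in $z$ with real symbol $\pm\mathfrak{t}(j)$. The Cauchy problem $\mL_8\mathbf{h}=\mathbf{f}$, $\mathbf{h}(0)=\mathbf{h}_{in}$ is therefore solved explicitly by Duhamel's formula
\begin{equation*}
\mathbf{h}(\tau)=e^{-\tau\, \mathrm{diag}(iT,-iT)}\mathbf{h}_{in}+\int_0^\tau e^{-(\tau-\tau')\,\mathrm{diag}(iT,-iT)}\mathbf{f}(\tau')\,d\tau'.
\end{equation*}
Because the propagator is unitary on every $H^s$ (it acts on each Fourier mode by a unimodular factor), we get, for every $s\in\R$,
\begin{equation*}
\|\mathbf{h}\|_{C([0,T];H^s\times H^s)}\le \|\mathbf{h}_{in}\|_{H^s\times H^s}+T\|\mathbf{f}\|_{C([0,T];H^s\times H^s)}.
\end{equation*}
Continuity in time with values in $H^s$ follows from strong continuity of the unitary group and dominated convergence on Fourier coefficients.

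For the perturbed problem, set $\Gamma(\mathbf{h})$ to be the solution of $\mL_8\Gamma(\mathbf{h})=\mathbf{f}_8-\mR_8\mathbf{h}$ with $\Gamma(\mathbf{h})(0)=\mathbf{h}_{in}$. By the hypothesis on $\mR_8$ at level $s_1$, $\Gamma$ maps $C([0,T];H^{s_1}\times H^{s_1})$ into itself and satisfies
\begin{equation*}
\|\Gamma(\mathbf{h}_1)-\Gamma(\mathbf{h}_2)\|_{C([0,T];H^{s_1}\times H^{s_1})}\le CT\varepsilon_8\|\mathbf{h}_1-\mathbf{h}_2\|_{C([0,T];H^{s_1}\times H^{s_1})}.
\end{equation*}
Choosing $\varepsilon_8$ with $CT\varepsilon_8<1/2$ makes $\Gamma$ a contraction, which gives existence and uniqueness in $C([0,T];H^{s_1}\times H^{s_1})$. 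Alternatively, to avoid smallness depending on $T$, one can iterate on short intervals $[k\tau_0,(k+1)\tau_0]$ with $\tau_0$ fixed by $C\tau_0\varepsilon_8<1/2$. Gluing the roughly $T/\tau_0$ steps then produces the factor $e^{CT}$ in \eqref{mR8EnergyEst}, with $C\sim\varepsilon_8$, or more precisely from a Gronwall argument applied to
\begin{equation*}
\|\mathbf{h}(\tau)\|\le\|\mathbf{h}_{in}\|+\int_0^\tau\bigl(\|\mathbf{f}_8\|+\|\mR_8\mathbf{h}\|\bigr)\,d\tau'.
\end{equation*}

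The step I expect to be the main obstacle is the estimate \eqref{mR8EnergyEst} for intermediate $s\in[0,s_1]$, since the smallness hypothesis on $\mR_8$ is only stated at $s_1$. Here I would use the bound \eqref{mREightEst}, which holds for all $s\ge s_0$ and is uniform in $s$ up to constants $\lesssim_s$. For $s$ below $s_0$ I would either appeal to the analogous tame $L^2$-level bound for $\mR_8$, which follows from its construction as a composition of bounded transformations with a smoothing remainder, or interpolate between $s=0$ and $s=s_1$. Once a bound $\|\mR_8\mathbf{f}\|_{H^s}\lesssim_s\varepsilon_8\|\mathbf{f}\|_{H^s}$ is available at level $s$, the same Gronwall argument applied to the already constructed solution (which lies in $H^{s_1}\subset H^s$) yields \eqref{mR8EnergyEst} at that level, with constant $e^{CT}$.
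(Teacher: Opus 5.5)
Your proposal is correct and follows essentially the same route as the paper. You solve $\mL_8\mathbf{h}=\mathbf{f}$ explicitly by Duhamel with the unitary propagator, which gives $\|\mathbf{h}\|\le\|\mathbf{h}_{in}\|+T\|\mathbf{f}\|$, and then close a contraction for the $\mR_8$-perturbation under $\varepsilon_8T<\tfrac12$. The paper instead splits off $\mathbf{h}_1=\mathfrak{T}(\mathbf{f}_8,\mathbf{h}_{in})$ and runs the fixed point on the remainder $\mathbf{h}_2$, which is equivalent to your map $\Gamma$. Note also that the paper's proof silently assumes smallness of $\|\mR_8\|_{C([0,T];\mL(H^s\times H^s))}$ at every level $s\in[0,s_1]$, so your explicit treatment of intermediate $s$ is, if anything, more careful than the original.
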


\begin{proof}
We decompose $\mathbf{h} = \mathbf{h}_1+ \mathbf{h}_2$, where
\begin{equation} \label{mLEightCauchy}
\left\{
\begin{array}{lr}
\mL_8 \mathbf{h}_1 = \mathbf{f}_8 &  \\
\mathbf{h}_1|_{\tau=0} = \mathbf{h}_{in},&
\end{array}
\right. \quad
\left\{
\begin{array}{lr}
\mL_8 \mathbf{h}_2 = -\mR_8 \mathbf{h}_1 -\mR_8 \mathbf{h}_2 &  \\
\mathbf{h}_2|_{\tau=0} = \mathbf{0}.&
\end{array}
\right.
\end{equation}
The first system in \eqref{mLEightCauchy} is a linear system that can be solved explicitly.
There exists a unique solution $\mathbf{h}_1\in C([0,T]; H^s\times H^s)$, such that
\begin{equation}\label{es-h1}
 \| \mathbf{h}_1\|_{C([0,T]; H^s\times H^s)} \lesssim \|\mathbf{h}_{in} \|_{H^s\times H^s} + T\| \mathbf{f}_8\|_{C([0,T]; H^s\times H^s)}.
\end{equation}
We introduce the operator $\mathfrak{T}$ to denote the solution map of the first system in \eqref{mLEightCauchy}:
\begin{equation*}
\mathbf{h}_1:=\mathfrak{T}(\mathbf{f}_8,\mathbf{h}_{in}).
\end{equation*}
Thus, to solve the second system in \eqref{mLEightCauchy}, we need to prove that the following operator $\mathfrak{T}_1$ has a fixed point:
\begin{equation*}
\mathbf{w}= \mathfrak{T}_1(\mathbf{w}): =-\mathfrak{T}(\mathcal{R}_8\mathbf{h}_1+\mathcal{R}_8\mathbf{w},\mathbf{0}).
\end{equation*}
From \eqref{es-h1}, we have the uniform bound
\begin{align*}
&\|\mathfrak{T}_1(\mathbf{w})\|_{C([0,T]; H^s\times H^s)}\lesssim T\| \mathcal{R}_8(\mathbf{h}_1+\mathbf{w})\|_{C([0,T]; H^s\times H^s)}\\
\lesssim &T\|\mR_8\|_{C([0,T]; \mL (H^s\times H^s))}\left(\|\mathbf{h}_{in} \|_{H^s\times H^s} + T\| \mathbf{f}_8\|_{C([0,T]; H^s\times H^s)}+\| \mathbf{w}\|_{C([0,T]; H^s\times H^s)}\right), 
\end{align*}
and the contraction estimate
\begin{align*}
\|\mathfrak{T}_1(\mathbf{w}_1-\mathbf{w}_2)\|_{C([0,T]; H^s\times H^s)}&\lesssim T\| \mathcal{R}_8(\mathbf{w}_1-\mathbf{w}_2)\|_{C([0,T]; H^s\times H^s)}\\
&\lesssim T\|\mR_8\|_{C([0,T]; \mL (H^s\times H^s))}\| \mathbf{w}_1-\mathbf{w}_2\|_{C([0,T]; H^s\times H^s)}.
\end{align*}
From the above estimates,  if $\varepsilon_8$ is chosen such that $\varepsilon_8 T<\f12$, it follows that $\mathfrak{T}_1$ is a contraction. 
We then obtain the estimate from the contraction mapping principle:
\begin{align*}
 \| \mathbf{h}\|_{C([0,T]; H^s\times H^s)} \lesssim_s& (1+ T\|\mR_8\|_{C([0,T]; \mL (H^s\times H^s))})\|\mathbf{h}_{in} \|_{H^s\times H^s} + T\| \mathbf{f}_8\|_{C([0,T]; H^s\times H^s)} \\
+& T^2 \|\mR_8\|_{C([0,T]; \mL (H^s\times H^s))} \| \mathbf{f}_8\|_{C([0,T]; H^s\times H^s)}.
\end{align*}
This leads to the energy estimate \eqref{mR8EnergyEst}.
\end{proof}

Going back to the linear system 
\begin{equation} \label{CauchySeven}
  (\mL_7 + \mR_7) \mathbf{h} = \mathbf{f}_7, \quad \mathbf{h}|_{\tau=0} = \mathbf{h}_{in}.
\end{equation}
Using the conjugation \eqref{mL78Conjugation}, it can be written as
\begin{equation*} 
 (\mL_8 + \mR_8) (\mU^{-1}\mathbf{h}) = \mU^{-1}\mathbf{f}_7, \quad \mU^{-1}\mathbf{h}|_{\tau=0} = \mU^{-1}\mathbf{h}_{in}.  
\end{equation*}
We then use Lemma \ref{t:Einv} to estimate the inverse of $A^{(1)}$ and $A^{(2)}$.
We get by \eqref{PhaseEstimate} the following well-posedness result.
\begin{lemma} \label{t:CauchySeven}
Let $T>0$, and $s_1\geq 0$.
Suppose $\mathbf{h}_{in} \in H^{s_1}(\T)\times H^{s_1}(\T)$, $\mathbf{f}_7\in C([0,T];H^{s_1}(\T)\times H^{s_1}(\T))$.
There exists $\varepsilon_7>0$ depending on $T$ such that if 
\begin{equation*}
 \|\mR_7 \mathbf{f} \|_{C([0,T];H^{s_1}\times H^{s_1})} \lesssim \varepsilon_7 \|\mathbf{f}\|_{C([0,T];H^{s_1}\times H^{s_1})},
\end{equation*}
there exists a unique solution $\mathbf{h}\in C([0,T];H^{s_1}\times H^{s_1})$ of the Cauchy problem \eqref{CauchySeven} satisfying the estimate:
For $s\in [0, s_1]$,
\begin{equation} \label{mR7EnergyEst}
 \| \mathbf{h}\|_{C([0,T]; H^s\times H^s)} \lesssim_s e^{CT}\left(\|\mathbf{h}_{in} \|_{H^s\times H^s} + \| \mathbf{f}_7\|_{C([0,T]; H^s\times H^s)}\right), 
\end{equation}
where the constant $C$ depends on $\varepsilon_7$.
\end{lemma}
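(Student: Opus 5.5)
The plan is to transfer Lemma \ref{t:CauchyEight} to the level of $\mL_7+\mR_7$ through the conjugation \eqref{mL78Conjugation}. Setting $\mathbf{k}:=\mU^{-1}\mathbf{h}$, the Cauchy problem \eqref{CauchySeven} is equivalent to
\begin{equation*}
(\mL_8+\mR_8)\mathbf{k} = \mU^{-1}\mathbf{f}_7, \quad \mathbf{k}|_{\tau=0} = \mU^{-1}(0)\mathbf{h}_{in},
\end{equation*}
with $\mR_8 = \mU^{-1}\mR_7\mU + (\mU^{-1}\mL_7\mU-\mL_8)$. The equivalence uses that $\mU$ and $\mU^{-1}$ are genuine inverses for each fixed $\tau$. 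This follows from Lemma \ref{t:Einv} together with the smallness \eqref{PhaseEstimate} of $\gamma$ and $p-1$.

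The first step is to record uniform bounds for $\mU^{\pm1}$. Lemma \ref{t:Einv} and \eqref{PhaseEstimate} should give, for $s\in[0,s_1]$,
\begin{equation*}
\|\mU^{\pm1}\mathbf{f}\|_{C([0,T];H^s\times H^s)} \le C_s \|\mathbf{f}\|_{C([0,T];H^s\times H^s)},
\end{equation*}
with $C_s$ uniform as long as $\|(\eta,\psi)\|$ is small in a sufficiently high norm. Strictly, the Fourier integral operator bound is tame, of the form $\|f\|_{H^s}+\|(\eta,\psi)\|_{\H^{s+c}}\|f\|_{H^{s_0}}$. Under the standing smallness assumption of this section, I would absorb this into a constant depending on $s_1$.

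The second step is to verify the hypothesis of Lemma \ref{t:CauchyEight}. The piece $\mU^{-1}\mR_7\mU$ has operator norm at most $C_{s_1}^2\varepsilon_7$ on $C([0,T];H^{s_1}\times H^{s_1})$. The piece $\mU^{-1}\mL_7\mU-\mL_8$ is the diagonal operator with entries $R_8^\pm$. By \eqref{REightEst} it has norm $\lesssim\|(\eta,\psi)\|_{\H^{s_1}}$, which is small under the standing assumption. Choosing $\varepsilon_7$, together with the smallness of $(\eta,\psi)$, so that the total is below $\varepsilon_8$ places us in the setting of Lemma \ref{t:CauchyEight}. That lemma then gives a unique $\mathbf{k}\in C([0,T];H^{s_1}\times H^{s_1})$ satisfying \eqref{mR8EnergyEst}.

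The last step is to set $\mathbf{h}=\mU\mathbf{k}$ and chain the bounds:
\begin{equation*}
\|\mathbf{h}\|_{C_TH^s} \le C_s\|\mathbf{k}\|_{C_TH^s} \lesssim_s e^{CT}\bigl(\|\mU^{-1}(0)\mathbf{h}_{in}\|_{H^s}+\|\mU^{-1}\mathbf{f}_7\|_{C_TH^s}\bigr) \lesssim_s e^{CT}\bigl(\|\mathbf{h}_{in}\|_{H^s}+\|\mathbf{f}_7\|_{C_TH^s}\bigr),
\end{equation*}
where $C_TH^s$ abbreviates $C([0,T];H^s\times H^s)$. This is \eqref{mR7EnergyEst}. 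Uniqueness follows because any two solutions of \eqref{CauchySeven} are mapped by $\mU^{-1}$ to solutions of \eqref{CauchyEight} with the same data.

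Continuity in time of $\mathbf{h}$ requires $\tau\mapsto\mU(\tau)$ to be strongly continuous on $H^s$. This follows from the continuity of $\gamma_0(\tau)$ and $p(\tau,\cdot,j)$ in $\tau$. The main obstacle I expect is justifying the uniform (non-tame) operator bounds on $\mU^{\pm1}$ and on $\mU^{-1}\mL_7\mU-\mL_8$ at the top regularity $s_1$. I would handle this by taking the required norm of $(\eta,\psi)$ small enough, possibly depending on $s_1$, and letting the implicit constant depend on $s_1$.
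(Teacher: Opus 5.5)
Your proposal is correct and follows the paper's own route. You rewrite \eqref{CauchySeven} through the conjugation \eqref{mL78Conjugation} as a Cauchy problem for $\mL_8+\mR_8$ in the unknown $\mU^{-1}\mathbf{h}$, control $\mU^{\pm1}$ via Lemma \ref{t:Einv} and \eqref{PhaseEstimate}, and then apply Lemma \ref{t:CauchyEight}; your extra remarks spell out what the paper leaves implicit, namely that making $\mR_8$ small also needs the standing smallness of $(\eta,\psi)$ to control $\mU^{-1}\mL_7\mU-\mL_8$, and that the tame bounds on $\mU^{\pm1}$ are absorbed into $s_1$-dependent constants.
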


$\mU$ and $\mU^{-1}$ are unitary matrices with constant coefficients.
Hence, the system
\begin{equation*} 
  (\mL_6 + \mR_6)\mathbf{h} = \mathbf{f}_6, \quad \mathbf{h}|_{\tau=0} = \mathbf{h}_{in}. 
\end{equation*}
has the same existence result and energy estimate as the system \eqref{CauchySeven}.

For the linear system 
\begin{equation*} 
  (\mL_5 + \mR_5) \mathbf{h} = \mathbf{f}_5, \quad \mathbf{h}|_{\tau=0} = \mathbf{h}_{in},
\end{equation*}
using the conjugation \eqref{TLFiveConjugation}, and the bound 
\eqref{BBInverseDef}, it has the same existence result and energy estimate as the system \eqref{CauchySeven}.

Next, we consider the linear system 
\begin{equation} \label{CauchyFour}
  (\mL_4 + \mR_4) \mathbf{h} = \mathbf{f}_4, \quad \mathbf{h}|_{\tau=0} = \mathbf{h}_{in}.
\end{equation}
Using the conjugation \eqref{SLFourConjugation}, it can be written as
\begin{equation*} 
 (\mL_5 + \mR_5) (M^{-1}\mathbf{h}) = M^{-1}\mathbf{f}_5, \quad M^{-1}\mathbf{h}|_{\tau=0} = M^{-1}\mathbf{h}_{in}.  
\end{equation*}
In view of the bound \eqref{MpmBound}, we get the following well-posedness result.
\begin{lemma} \label{t:CauchyFour}
Let $T>0$, and $s_1\geq 0$.
Suppose $\mathbf{h}_{in} \in H^{s_1}(\T)\times H^{s_1}(\T)$, $\mathbf{f}_4\in C([0,T];H^{s_1}(\T)\times H^{s_1}(\T))$.
There exists $\varepsilon_4>0$ depending on $T$ such that if 
\begin{equation*}
 \|\mR_4 \mathbf{f} \|_{C([0,T];H^{s_1}\times H^{s_1})} \lesssim \varepsilon_4 \|\mathbf{f}\|_{C([0,T];H^{s_1}\times H^{s_1})},
\end{equation*}
there exists a unique solution $\mathbf{h}\in C([0,T];H^{s_1}\times H^{s_1})$ of the Cauchy problem \eqref{CauchyFour} satisfying the estimate:
For $s\in [0, s_1]$, 
\begin{equation} \label{mR4EnergyEst}
 \| \mathbf{h}\|_{C([0,T]; H^s\times H^s)} \lesssim_s e^{CT}\left(\|\mathbf{h}_{in} \|_{H^s\times H^s} + \| \mathbf{f}_4\|_{C([0,T]; H^s\times H^s)}\right),  
\end{equation}
where the constant $C$ depends on $\varepsilon_4$.
\end{lemma}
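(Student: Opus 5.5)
The plan is to reduce the Cauchy problem \eqref{CauchyFour} to the already-solved problem for $\mL_5+\mR_5$ through the conjugation \eqref{SLFourConjugation}. The two ingredients are the two-sided bound \eqref{MpmBound} for $M^{\pm1}$ and the well-posedness of $(\mL_5+\mR_5)\mathbf{k}=\mathbf{f}_5$. The latter was obtained, via \eqref{TLFiveConjugation} and the norm-preserving translation $\mathcal{T}$ in \eqref{TTInverseBound}, from Lemma \ref{t:CauchySeven} and the unitary $\mathcal{O}$.

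\textbf{Step 1: change of unknown.} Set $\mathbf{k}:=M^{-1}\mathbf{h}$. Since $M$ has no time derivatives acting on $\mathbf{h}$, \eqref{SLFourConjugation} shows that $\mathbf{h}$ solves \eqref{CauchyFour} if and only if
\begin{equation*}
(\mL_5+\mR_5)\mathbf{k}=M^{-1}\mathbf{f}_4,\qquad \mathbf{k}|_{\tau=0}=M^{-1}(0)\mathbf{h}_{in}.
\end{equation*}
Strictly, $M$ depends on $\tau$ through $\ell_1,v_1,v_2$. Because the identity $M^{-1}(\mL_4+\mR_4)M=\mL_5+\mR_5$ is an operator identity, $\p_\tau$ acting on $M$ is already accounted for in $\mR_5$. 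The well-definedness of $M^{-1}$ comes from the Neumann series, which converges since $\|M_R\|\lesssim\|(\eta,\psi)\|_{C([0,T];\H^{s+2})}\ll1$.

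\textbf{Step 2: smallness of the transformed remainder.} I need to check the smallness hypothesis of the $\mL_5$ level, namely $\|\mR_5\mathbf{f}\|\lesssim\varepsilon_5\|\mathbf{f}\|$ in $C([0,T];H^{s_1}\times H^{s_1})$. The remainder splits as $\mR_5=M^{-1}\mR_4M+(M^{-1}\mL_4M-\mL_5)$.
\begin{itemize}
\item The first piece is bounded by $C_2^2\varepsilon_4$, using \eqref{MpmBound} and the hypothesis on $\mR_4$.
\item The second piece is of order $\le 0$ by the construction of $\ell_1,v_1,v_2,a_{28},a_{30}$ in \eqref{eq1}--\eqref{eq5}. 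Its coefficients are controlled by $\|(\eta,\psi)\|_{C([0,T];\H^{s_1+5/2})}$, which is small under the standing smallness assumption of this section.
\end{itemize}
Choosing $\varepsilon_4$ and $\delta$ small therefore gives $\varepsilon_5$ small enough. Then the $\mL_5$ result (the analogue of Lemma \ref{t:CauchySeven}) yields a unique $\mathbf{k}\in C([0,T];H^{s_1}\times H^{s_1})$ with
\begin{equation*}
\|\mathbf{k}\|_{C([0,T];H^s\times H^s)}\lesssim_s e^{CT}\bigl(\|M^{-1}(0)\mathbf{h}_{in}\|_{H^s\times H^s}+\|M^{-1}\mathbf{f}_4\|_{C([0,T];H^s\times H^s)}\bigr).
\end{equation*}

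\textbf{Step 3: return to $\mathbf{h}$.} Define $\mathbf{h}:=M\mathbf{k}$. Applying \eqref{MpmBound} on both sides gives \eqref{mR4EnergyEst}, with the constants $C_1,C_2$ absorbed into $\lesssim_s$. Uniqueness transfers because $M$ is a bijection on $C([0,T];H^s\times H^s)$: if two solutions $\mathbf{h}$ existed, their images $M^{-1}\mathbf{h}$ would be two solutions at the $\mL_5$ level.

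\textbf{Main obstacle.} The hardest point is Step 2, specifically that the order-$\le0$ remainder $M^{-1}\mL_4M-\mL_5$ has small operator norm uniformly for $s\in[0,s_1]$. This matters because the symbolic expansion that produces it loses derivatives on the coefficients $a_{18},\dots,a_{27}$. I would handle it by the tame estimates for pseudo-differential operators in the Appendix, requiring $\|(\eta,\psi)\|_{C([0,T];\H^{s_1+5/2})}$ small. This is consistent with the standing assumption of this section, so the argument closes once $\delta$ is taken small depending on $T$.
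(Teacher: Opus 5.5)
Your proposal follows the paper's route: rewrite \eqref{CauchyFour} through \eqref{SLFourConjugation} as the $\mL_5+\mR_5$ Cauchy problem for $M^{-1}\mathbf{h}$, use the $\mL_5$-level well-posedness inherited from Lemma \ref{t:CauchySeven} via the norm-preserving $\mathcal{T}$ and $\mathcal{O}$, and transfer existence, uniqueness and the estimate back with \eqref{MpmBound}. Your Step 2 makes explicit what the paper leaves implicit, namely that the piece $M^{-1}\mL_4M-\mL_5$ of $\mR_5$ is not controlled by the hypothesis on $\mR_4$ and needs smallness of $(\eta,\psi)$. That smallness must also cover time derivatives, as in the hypothesis of Lemma \ref{t:ControlHs}, not just $C([0,T];\H^{s_1+5/2})$: $\ell_1$ contains $a_{25}$ and hence $\p_t\eta$, and $\p_\tau\ell_1$ appears in the remainder.
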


We then consider the linear system 
\begin{equation} \label{CauchyThree}
  \mL_3  \mathbf{h} = \mathbf{f}_3, \quad \mathbf{h}|_{\tau=0} = \mathbf{h}_{in}.
\end{equation}
Using the conjugation \eqref{SLThreeConjugation}, it can be written as
\begin{equation*} 
 (\mL_4 + \mR_4) (S^{-1}\mathbf{h}) = S^{-1}\mathbf{f}_4, \quad S^{-1}\mathbf{h}|_{\tau=0} = S^{-1}\mathbf{h}_{in}.  
\end{equation*}
We then get the well-posedness result for \eqref{CauchyThree}.

\begin{lemma} \label{t:CauchyThree}
Let $T>0$, and $s_1\geq s_0$.
Suppose $\mathbf{h}_{in} \in \H^{s_1}(\T)$, $\mathbf{f}_3\in C([0,T]; \H^{s_1}(\T))$.
There exists $\varepsilon_3>0$ depending on $T$ such that if 
\begin{equation*}
 \|(\eta, \psi)\|_{C([0,T]; \H^{s_1+\f52})} \lesssim \varepsilon_3 ,
\end{equation*}
there exists a unique solution $\mathbf{h}\in C([0,T]; \H^{s_1})$ of the Cauchy problem \eqref{CauchyThree} satisfying the estimate:
For $s\in [0, s_1]$, 
\begin{equation} \label{mR3EnergyEst}
 \| \mathbf{h}\|_{C([0,T]; \H^s)} \lesssim_s e^{CT}\left(\|\mathbf{h}_{in} \|_{\H^s} + \| \mathbf{f}_3\|_{C([0,T]; \H^s)}\right), 
\end{equation}
where the constant $C$ depends on $\varepsilon_3$.
\end{lemma}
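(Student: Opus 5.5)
The plan is to transfer Lemma \ref{t:CauchyFour} back to \eqref{CauchyThree} through the conjugation \eqref{SLThreeConjugation}, $S^{-1}\mL_3 S=\mL_4+\mR_4$. Set $\mathbf{k}:=S^{-1}\mathbf{h}$. Then \eqref{CauchyThree} is equivalent to
\begin{equation*}
(\mL_4+\mR_4)\mathbf{k}=S^{-1}\mathbf{f}_3,\qquad \mathbf{k}|_{\tau=0}=S^{-1}\mathbf{h}_{in}.
\end{equation*}
The core observation is that $S=\mathrm{diag}\{1,m^{1/2}\Lambda\}$ is a constant-coefficient Fourier multiplier. By \eqref{LambdaSymbolDef}, the symbol $\mathfrak{g}$ is smooth, positive, and comparable to $\langle\xi\rangle^{3/2}$. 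Hence $\Lambda$ is an isomorphism $H^{s}\to H^{s-\f32}$, and $S^{-1}$ is an isomorphism from $\H^s=H^{s+\f32}\times H^s$ onto $H^{s+\f32}\times H^{s+\f32}$. The equivalence constants depend only on $g,\sigma$ and $m$. By \eqref{mu} we have $m\sim 1$ when $\|\eta_x\|_{L^\infty}$ is small, so these constants are uniform.

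I would therefore apply Lemma \ref{t:CauchyFour} with the index $s_1+\f32$ in place of $s_1$. This uses the data $S^{-1}\mathbf{h}_{in}\in H^{s_1+\f32}\times H^{s_1+\f32}$ and the forcing $S^{-1}\mathbf{f}_3\in C([0,T];H^{s_1+\f32}\times H^{s_1+\f32})$. Its hypothesis requires the smallness bound on $\mR_4$ at that index. The bound stated after \eqref{SLThreeConjugation} gives
\begin{equation*}
\|\mR_4\mathbf{f}\|_{C([0,T];H^{s}\times H^s)}\lesssim\|\eta\|^\theta_{C([0,T];H^{s+4})}\|\mathbf{f}\|_{C([0,T];H^{s}\times H^s)},\qquad s\ge s_0 .
\end{equation*}
At $s=s_1+\f32$ this involves $\|\eta\|_{H^{s_1+\f{11}{2}}}$, which is exactly the $\eta$-component of $\|(\eta,\psi)\|_{\H^{s_1+4}}$. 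So the requirement to check is that this $\eta$-norm is controlled by the assumed smallness $\varepsilon_3$ once the indices are aligned. If they do not align, I would adjust the index bookkeeping: either carry the $\eta$-norm in the hypothesis, or note that $\mR_4$ is smoothing so that one loses fewer derivatives. Choosing $\varepsilon_3$ so that $C\varepsilon_3^\theta\le\varepsilon_4$ then gives a unique $\mathbf{k}\in C([0,T];H^{s_1+\f32}\times H^{s_1+\f32})$.

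Setting $\mathbf{h}=S\mathbf{k}$ gives $\mathbf{h}\in C([0,T];\H^{s_1})$. Uniqueness follows because $S$ is injective. The estimate \eqref{mR3EnergyEst} for $s\in[0,s_1]$ comes from \eqref{mR4EnergyEst} at the index $s+\f32$:
\begin{equation*}
\|\mathbf{h}\|_{C\H^s}\lesssim\|\mathbf{k}\|_{C(H^{s+\f32})^2}\lesssim e^{CT}\big(\|S^{-1}\mathbf{h}_{in}\|_{(H^{s+\f32})^2}+\|S^{-1}\mathbf{f}_3\|_{C(H^{s+\f32})^2}\big)\lesssim e^{CT}\big(\|\mathbf{h}_{in}\|_{\H^s}+\|\mathbf{f}_3\|_{C\H^s}\big).
\end{equation*}
Here the norm equivalence for $S^{\pm1}$ is used in both directions.

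The only genuine obstacle I expect is this regularity bookkeeping for the remainder bound at the shifted index $s_1+\f32$. One also needs that the hypothesis of Lemma \ref{t:CauchyFour} holds on the whole range $[0,s_1+\f32]$; it is monotone, so the top index suffices. Finally, the constant $m$ and the time variable $\tau$ produced by the reparametrization $\mA$ must not spoil uniformity in $T$. Since $\al(0)=\al(T)=0$, the $\tau$-interval is still $[0,T]$, and everything else is routine.
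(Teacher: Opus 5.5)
Your proposal is the paper's argument: rewrite \eqref{CauchyThree} through $S^{-1}\mL_3S=\mL_4+\mR_4$ as a Cauchy problem for $S^{-1}\mathbf{h}$, and apply Lemma \ref{t:CauchyFour} at the shifted index $s_1+\f32$ (the paper leaves this shift implicit). You then transfer back using that the constant-coefficient multiplier $S^{\pm1}$ exchanges $\H^s$ and $H^{s+\f32}\times H^{s+\f32}$. The $\f32$-derivative mismatch you flag is real: at index $s_1+\f32$ the stated $\mR_4$ bound needs $\eta\in H^{s_1+\f{11}{2}}$, while the hypothesis only controls $\eta\in H^{s_1+4}$. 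It comes from the paper's own bounds and is not addressed in the paper's one-line proof either, so it is not a gap in your argument relative to the paper.
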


For the linear system 
\begin{equation} \label{CauchyTwo}
  \mL_2  \mathbf{h} = \mathbf{f}_2, \quad \mathbf{h}|_{\tau=0} = \mathbf{h}_{in},
\end{equation}
applying \eqref{LThreeMatrix}, it can be written as
\begin{equation*} 
\mL_3 (Q^{-1}\mathbf{h}) = P^{-1}\mathbf{f}_2, \quad Q^{-1}\mathbf{h}|_{\tau=0} = Q^{-1}\mathbf{h}_{in}.
 \end{equation*}
Using the bound \eqref{L2inv} and Lemma \ref{t:CauchyThree}, we get the following well-posedness result.
\begin{lemma} \label{t:CauchyTwo}
Let $T>0$, and $s_1\geq s_0$.
Suppose $\mathbf{h}_{in} \in \H^{s_1}(\T)$, $\mathbf{f}_2\in C([0,T]; \H^{s_1}(\T))$.
There exists $\varepsilon_2>0$ depending on $T$ such that if 
\begin{equation*}
 \|(\eta, \psi)\|_{C([0,T]; \H^{s_1+\f52})} \lesssim \varepsilon_2 ,
\end{equation*}
there exists a unique solution $\mathbf{h}\in C([0,T]; \H^{s_1})$ of the Cauchy problem \eqref{CauchyTwo} satisfying the estimate:
For $s\in [0, s_1]$, 
\begin{equation} \label{mR2EnergyEst}
 \| \mathbf{h}\|_{C([0,T]; \H^s)} \lesssim_s e^{CT}\left(\|\mathbf{h}_{in} \|_{\H^s} + \| \mathbf{f}_2\|_{C([0,T]; \H^s)}\right), 
\end{equation}
where the constant $C$ depends on $\varepsilon_2$.
\end{lemma}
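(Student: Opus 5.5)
The plan is to transfer Lemma \ref{t:CauchyThree} to the system \eqref{CauchyTwo} through the identity $\mL_3 = P^{-1}\mL_2 Q$ from \eqref{LThreeMatrix}. Multiplying \eqref{CauchyTwo} on the left by $P^{-1}$ and writing $\mathbf{h} = Q\mathbf{k}$, the problem becomes equivalent to
\begin{equation*}
\mL_3 \mathbf{k} = P^{-1}\mathbf{f}_2, \qquad \mathbf{k}|_{\tau=0} = Q^{-1}\mathbf{h}_{in}.
\end{equation*}
The equivalence holds because $P$ and $Q$ are diagonal multiplication operators by the functions $a_9$, $a_{13}m^{-1}$ and $a_{13}^{-1}a_9$. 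These functions are bounded and bounded away from zero: $a_9=\mA^{-1}(1+\al')$ obeys the two-sided bound derived after \eqref{alpha}, and $a_{13}$ is a positive power of $(1+\eta_x^2)^{-1/2}$ times such factors. Consequently $P$ and $Q$ are invertible, and the maps $\mathbf{h}\leftrightarrow\mathbf{k}$ and $\mathbf{f}_2\mapsto P^{-1}\mathbf{f}_2$ are bijections.

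The first step is to verify the hypotheses of Lemma \ref{t:CauchyThree} for the new data. By \eqref{L2inv}, for $s\in[0,s_1]$ with $s>2$,
\begin{equation*}
\|P^{-1}\mathbf{f}_2\|_{\H^s}\lesssim \|\mathbf{f}_2\|_{\H^s}+\|\eta\|_{H^{2}}\|\mathbf{f}_2\|_{\H^s}+\|\eta\|_{H^{s+2}}\|\mathbf{f}_2\|_{\H^2}.
\end{equation*}
The analogous bound holds for $Q^{-1}\mathbf{h}_{in}$. Since $\|\eta\|_{H^{s+2}}\le \|(\eta,\psi)\|_{\H^{s_1+5/2}}\lesssim\varepsilon_2$ and $\|\cdot\|_{\H^2}\le\|\cdot\|_{\H^s}$, both transformed quantities are controlled by the original norms. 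This holds pointwise in $\tau$, hence also in $C([0,T];\H^s)$. For $s\le 2$ I will use the cruder product bound with $\|\eta\|_{W^{k,\infty}}$, which Sobolev embedding controls by $\varepsilon_2$. Choosing $\varepsilon_2\le\varepsilon_3$, Lemma \ref{t:CauchyThree} then gives a unique $\mathbf{k}\in C([0,T];\H^{s_1})$ satisfying \eqref{mR3EnergyEst}.

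The second step is to set $\mathbf{h}=Q\mathbf{k}$ and apply \eqref{L2inv} once more, now with $Q$, to obtain
\begin{equation*}
\|\mathbf{h}\|_{C\H^s}\lesssim\|\mathbf{k}\|_{C\H^s}\lesssim e^{CT}\big(\|\mathbf{h}_{in}\|_{\H^s}+\|\mathbf{f}_2\|_{C\H^s}\big),
\end{equation*}
which is \eqref{mR2EnergyEst}. For uniqueness, any solution $\mathbf{h}$ of \eqref{CauchyTwo} yields a solution $Q^{-1}\mathbf{h}$ of the transformed problem, so uniqueness follows from Lemma \ref{t:CauchyThree}. Time continuity is preserved because the coefficients $a_9$ and $a_{13}$ are continuous in $\tau$ with values in $H^{s_1+1}$.

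The main obstacle is bookkeeping rather than analysis. The multiplication by $a_{13}$ acts on the second component in $H^s$, but the $\H^s$ norm weights the two components differently ($H^{s+3/2}$ versus $H^s$). Since $P$ and $Q$ are diagonal, each component is a product estimated separately, and this requires coefficient regularity $H^{s+3/2}$ on the first entry, i.e. $\eta\in H^{s+5/2}$ after accounting for $\eta_x$. This is exactly the assumed $\H^{s_1+5/2}$ smallness, provided \eqref{L2inv} is read with this loss. If the loss is tighter than stated, I would increase the regularity assumption by a fixed amount, which is harmless in the Nash–Moser framework.
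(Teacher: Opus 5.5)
Your proposal is correct and is essentially the paper's argument: rewrite \eqref{CauchyTwo} as $\mL_3(Q^{-1}\mathbf{h}) = P^{-1}\mathbf{f}_2$ with initial datum $Q^{-1}\mathbf{h}_{in}$, apply Lemma \ref{t:CauchyThree}, and transfer the estimate back through \eqref{L2inv}; your handling of $s\le 2$, uniqueness and time continuity fills in details the paper leaves implicit. One small correction to your last paragraph: the first diagonal entries of $P$ and $Q$ are $a_9$ (a function of $\tau$ alone) and $1$, so the first component costs no spatial regularity, and the actual cost sits in the multiplier $a_{13}$ on the second component, whose composition with $\mB^{-1}$ is controlled by \eqref{BBInvBoundTwo} in terms of $\|\eta\|_{H^{s+4}}$ --- this is the $\eta$-component of the assumed $\H^{s_1+5/2}$ bound (not $H^{s+5/2}$), so the hypothesis suffices without any extra loss.
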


For the linear system 
\begin{equation} \label{CauchyOne}
  \mL_1  \mathbf{h} = \mathbf{f}_1, \quad \mathbf{h}|_{t=0} = \mathbf{h}_{in},
\end{equation}
by \eqref{LTwoMatrix}, it can be written as 
\begin{equation*} 
\mL_2(\mathcal{A}^{-1}\mathbf{h}) = \mathcal{A}^{-1}\mathbf{f}_1, \quad \mathcal{A}^{-1}\mathbf{h}|_{\tau=0} = \mathcal{A}^{-1}\mathbf{h}_{in}.
 \end{equation*}
Using the bound \eqref{AAInverseBound} and Lemma \ref{t:CauchyTwo}, we get the following well-posedness result.
\begin{lemma} \label{t:CauchyOne}
Let $T>0$, and $s_1\geq s_0$.
Suppose $\mathbf{h}_{in} \in \H^{s_1}(\T)$, $\mathbf{f}_1\in C([0,T]; \H^{s_1}(\T))$.
There exists $\varepsilon_1>0$ depending on $T$ such that if 
\begin{equation*}
 \|(\eta, \psi)\|_{C([0,T]; \H^{s_1+\f52})} \lesssim \varepsilon_1 ,
\end{equation*}
there exists a unique solution $\mathbf{h}\in C([0,T]; \H^{s_1})$ of the Cauchy problem \eqref{CauchyOne} satisfying the estimate:
For $s\in [0, s_1]$, 
\begin{equation} \label{mR1EnergyEst}
 \| \mathbf{h}\|_{C([0,T]; \H^s)} \lesssim_s e^{CT}\left(\|\mathbf{h}_{in} \|_{\H^s} + \| \mathbf{f}_1\|_{C([0,T]; \H^s)}\right),
\end{equation}
where the constant $C$ depends on $\varepsilon_1$.
\end{lemma}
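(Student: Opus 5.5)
The plan is to reduce the Cauchy problem \eqref{CauchyOne} to \eqref{CauchyTwo} through the time reparametrization $\mA$, and then invoke Lemma \ref{t:CauchyTwo}. Set $\mathbf{k} := \mA^{-1}\mathbf{h}$. By \eqref{LTwoMatrix}, $\mL_2 = \mA^{-1}\mL_1\mA$, so $\mL_1\mathbf{h} = \mathbf{f}_1$ holds if and only if $\mL_2\mathbf{k} = \mA^{-1}\mathbf{f}_1$.

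For the initial data, $\al(0)=0$ forces $\tilde\al(0)=0$. Hence $(\mA^{-1}\mathbf{h})(0,\cdot) = \mathbf{h}(0,\cdot)$, so $\mathbf{k}|_{\tau=0} = \mathbf{h}_{in}$. Since $\al(T)=0$, the map $t\mapsto t+\al(t)$ sends $[0,T]$ onto $[0,T]$. It is a $C^1$ bijection, because $1+\al'$ is bounded below by $(1+\|\eta_x\|_{L^\infty_{t,x}})^{-5/2}>0$. Consequently $\mA^{\pm1}$ map $C([0,T];\H^{s})$ to itself isometrically, by \eqref{AAInverseBound} applied componentwise in $H^{s+\f32}\times H^s$.

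Next we check the hypotheses of Lemma \ref{t:CauchyTwo}. Its smallness assumption on $\|(\eta,\psi)\|_{C([0,T];\H^{s_1+\f52})}$ is the same as ours, with $\varepsilon_1 := \varepsilon_2$. This works because the coefficients of $\mL_2$ are built from $(\eta,\psi)$ through $\mA^{-1}$ and $\mB^{-1}$. The time rescaling by $\mA^{-1}$ preserves the $C([0,T];H^s)$ norms, and the constant $m$ together with $\al$ depend only on $\eta$. One caveat: in Lemma \ref{t:CauchyTwo} the coefficients are really functions of $\mA^{-1}(\eta,\psi)$, and those have the same norm. Also $\mA^{-1}\mathbf{f}_1 \in C([0,T];\H^{s_1})$ with the same norm.

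Lemma \ref{t:CauchyTwo} then yields a unique $\mathbf{k}\in C([0,T];\H^{s_1})$ satisfying \eqref{mR2EnergyEst}. We define $\mathbf{h} := \mA\mathbf{k}$. This gives existence, and uniqueness follows because any solution $\mathbf{h}$ yields a solution $\mA^{-1}\mathbf{h}$ of \eqref{CauchyTwo}. For the estimate \eqref{mR1EnergyEst}, fix $s\in[0,s_1]$ and compute
\[
\|\mathbf{h}\|_{C([0,T];\H^s)} = \|\mathbf{k}\|_{C([0,T];\H^s)} \lesssim_s e^{CT}\big(\|\mathbf{h}_{in}\|_{\H^s} + \|\mA^{-1}\mathbf{f}_1\|_{C([0,T];\H^s)}\big),
\]
and the last norm equals $\|\mathbf{f}_1\|_{C([0,T];\H^s)}$.

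The only mildly delicate point is the identification of the equation. Writing $\mA^{-1}\p_t\mA = \{\mA^{-1}(1+\al')\}\p_\tau$ makes it an equivalence between distributional solutions in $C([0,T];\H^{s_1})$. This requires $\al\in C^1$ with $1+\al'>0$, which holds by \eqref{alpha} since $\eta$ is continuous in time.
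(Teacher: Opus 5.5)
Your proposal is correct and follows the paper's proof: it rewrites \eqref{CauchyOne} via $\mL_2 = \mA^{-1}\mL_1\mA$ as $\mL_2(\mA^{-1}\mathbf{h}) = \mA^{-1}\mathbf{f}_1$, then combines the isometry \eqref{AAInverseBound} with Lemma \ref{t:CauchyTwo}. Your extra checks fill in details the paper leaves implicit: that $\tilde\al(0)=0$, that $t\mapsto t+\al(t)$ is a $C^1$ bijection of $[0,T]$ since $1+\al'>0$ and $\al(0)=\al(T)=0$, and that uniqueness transfers back through $\mA$.
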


For the linear system 
\begin{equation} \label{CauchyZero1}
  \mL_0  \mathbf{h} = \mathbf{f}_0, \quad \mathbf{h}|_{t=0} = \mathbf{h}_{in},
\end{equation}
in view of the conjugation \eqref{LOneMatrix}, it can be written as
\begin{equation*} 
\mL_1(\mathcal{B}^{-1}\mathbf{h}) = \mathcal{B}^{-1}\mathbf{f}_0, \quad \mathcal{B}^{-1}\mathbf{h}|_{t=0} = \mathcal{B}^{-1}\mathbf{h}_{in}.
 \end{equation*}
Using the bound \eqref{BBInverseBound} and Lemma \ref{t:CauchyOne}, we get the following well-posedness result.
\begin{lemma} \label{t:CauchyZero1}
Let $T>0$, and $s_1\geq s_0$.
Suppose $\mathbf{h}_{in} \in \H^{s_1}(\T)$, $\mathbf{f}_0\in C([0,T]; \H^{s_1}(\T))$.
There exists $\varepsilon_0>0$ depending on $T$ such that if 
\begin{equation*}
 \|(\eta, \psi)\|_{C([0,T]; \H^{s_1+\f52})} \lesssim \varepsilon_0,
\end{equation*}
there exists a unique solution $\mathbf{h}\in C([0,T]; \H^{s_1})$ of the Cauchy problem \eqref{CauchyOne} satisfying the estimate:
For $s\in [0, s_1]$, 
\begin{equation} \label{mR01EnergyEst}
 \| \mathbf{h}\|_{C([0,T]; \H^s)} \lesssim_s e^{CT}\left(\|\mathbf{h}_{in} \|_{\H^s} + \| \mathbf{f}_0\|_{C([0,T]; \H^s)}\right),
\end{equation}
where the constant $C$ depends on $\varepsilon_0$.
\end{lemma}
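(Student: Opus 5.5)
We reduce the Cauchy problem \eqref{CauchyZero1} for $\mL_0$ to the one for $\mL_1$ via the conjugation \eqref{LOneMatrix}, exactly as was done at the previous steps. Since $\mL_1 = \mB^{-1}\mL_0\mB$, a function $\mathbf{h}$ solves $\mL_0\mathbf{h} = \mathbf{f}_0$ with $\mathbf{h}(0)=\mathbf{h}_{in}$ if and only if $\mathbf{k} := \mB^{-1}\mathbf{h}$ solves $\mL_1\mathbf{k} = \mB^{-1}\mathbf{f}_0$ with $\mathbf{k}(0) = \mB^{-1}\mathbf{h}_{in}$. Here $\mB$ and $\mB^{-1}$ act componentwise on the pair, and at $t=0$ they reduce to the spatial change of variables $x\mapsto x+\beta(0,x)$. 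Because $\mB$ is a bijection on $C([0,T];\H^{s_1})$, existence and uniqueness for $\mL_0$ follow from those for $\mL_1$, which are given by Lemma \ref{t:CauchyOne}.

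First, we check that the data for the $\mL_1$ problem satisfy the hypotheses of Lemma \ref{t:CauchyOne}. By \eqref{BBInvBoundTwo} applied to each component (at order $s+\f32$ for the first and $s$ for the second), we have
\begin{equation*}
\|\mB^{-1}\mathbf{f}_0\|_{C([0,T];\H^s)} \lesssim \|\mathbf{f}_0\|_{C([0,T];\H^s)} + \|\eta\|_{C([0,T];H^{s+\f32+4})}\|\mathbf{f}_0\|_{C([0,T];L^2\times L^2)} \lesssim \|\mathbf{f}_0\|_{C([0,T];\H^s)}.
\end{equation*}
The last inequality uses the smallness of $\eta$. To have $H^{s+\f{11}{2}}$ controlled for all $s\le s_1$ by the assumption $\|(\eta,\psi)\|_{C([0,T];\H^{s_1+\f52})}\lesssim\varepsilon_0$, we note that $\|\eta\|_{H^{s_1+4}} \le \|(\eta,\psi)\|_{\H^{s_1+\f52}}$. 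If the extra loss is not absorbed this way, we would either enlarge $s_0$ and interpret the smallness assumption with the loss built in, or use the tame version of Lemma \ref{t:BaldiLemma} with the loss placed on the low norm of $\mathbf{f}_0$.

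The same bound holds for $\mB^{-1}\mathbf{h}_{in}$ at $t=0$. We also have $\varepsilon_0\le\varepsilon_1$, so the smallness hypothesis of Lemma \ref{t:CauchyOne} holds. That lemma then gives a unique $\mathbf{k}\in C([0,T];\H^{s_1})$ with
\begin{equation*}
\|\mathbf{k}\|_{C([0,T];\H^s)}\lesssim e^{CT}\bigl(\|\mathbf{h}_{in}\|_{\H^s}+\|\mathbf{f}_0\|_{C([0,T];\H^s)}\bigr).
\end{equation*}

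Next, we set $\mathbf{h}=\mB\mathbf{k}$. By \eqref{BBInverseBound}, $\|\mathbf{h}\|_{C([0,T];\H^s)}\lesssim \|\mathbf{k}\|_{C([0,T];\H^s)}(1+\|\eta\|_{C([0,T];H^{s+\f72})})$, and this gives \eqref{mR01EnergyEst}. Continuity in time of $\mathbf{h}$ follows from the continuity of $\mathbf{k}$ together with the continuity of $\beta$ in $t$, which comes from \eqref{beta} and \eqref{BetaHsEst}. Uniqueness follows because any solution $\mathbf{h}'$ gives a solution $\mB^{-1}\mathbf{h}'$ of the $\mL_1$ problem, which is unique.

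The only delicate point is the derivative loss in \eqref{BBInverseBound}--\eqref{BBInvBoundTwo}, up to four derivatives on $\eta$, measured against the $\H^{s_1+\f52}$ smallness of $(\eta,\psi)$. I expect to handle it by bookkeeping the regularity index: the loss only multiplies low norms of the data, so it is harmless under smallness, possibly after increasing $s_0$.
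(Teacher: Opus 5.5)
Your proposal is correct and follows the paper's own argument, written out in more detail: the paper rewrites \eqref{CauchyZero1} as $\mL_1(\mB^{-1}\mathbf{h})=\mB^{-1}\mathbf{f}_0$, $\mB^{-1}\mathbf{h}|_{t=0}=\mB^{-1}\mathbf{h}_{in}$, and concludes directly from \eqref{BBInverseBound} and Lemma \ref{t:CauchyOne}. The paper never addresses the $\mB^{-1}$ derivative loss you flag, and your first remedy cannot fix it, because the $\f32$ missing derivatives of $\eta$ are counted from $s_1$, not $s_0$; your second remedy works, namely a sharper composition estimate in which $\tilde\beta$ costs no more derivatives than the target norm, which is available since $\|\tilde\beta\|_{H^\sigma}\lesssim\|\beta\|_{H^\sigma}$, and in which the data enter only through a fixed low norm.
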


Finally, for the system \eqref{CauchyZero}, using the conjugation \eqref{ZConjugation}, it can be written as
\begin{equation*}
\mathcal{L}_0\mathcal{Z}^{-1}\tilde{\mathbf{u}}=\mathcal{Z}^{-1}\tilde{\mathbf{f}},\quad \mathcal{Z}^{-1}\tilde{\mathbf{u}}|_{t=0} = \mathcal{Z}^{-1}\tilde{\mathbf{u}}_{in}.
\end{equation*}
Using the bound \eqref{ZBoundInverse} and Lemma \ref{t:CauchyZero1}, we get the following well-posedness result.
\begin{lemma} \label{t:CauchyZero}
Let $T>0$, and $s_1\geq s_0$.
Suppose $\tilde{\mathbf{u}}_{in} \in \H^{s_1}(\T)$, $\tilde{\mathbf{f}}\in C([0,T]; \H^{s_1}(\T))$.
There exists $\delta>0$ depending on $T$ such that if 
\begin{equation*}
 \|(\eta, \psi)\|_{C([0,T]; \H^{s_1+\f52})}<\delta,
\end{equation*}
there exists a unique solution $\tilde{\mathbf{u}}\in C([0,T]; \H^{s_1})$ of the Cauchy problem \eqref{CauchyZero} satisfying the estimate:
For $s\in [0, s_1]$, 
\begin{equation} \label{TildeU0EnergyEst}
 \|\tilde{\mathbf{u}}\|_{C([0,T]; \H^s)} \lesssim_s e^{CT}\left(\|\tilde{\mathbf{u}}_{in} \|_{\H^s} + \|\tilde{\mathbf{f}}\|_{C([0,T]; \H^s)}\right),  
\end{equation}
where the constant $C$ depends on $\varepsilon_0$.
\end{lemma}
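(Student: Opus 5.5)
The proof will reduce \eqref{CauchyZero} to the Cauchy problem \eqref{CauchyZero1} for $\mL_0$ via the conjugation \eqref{ZConjugation}. Writing $\tilde{\mathbf{u}} = \mZ\mathbf{h}$, the equation $\mathbf{P}'(\mathbf{u})[\tilde{\mathbf{u}}] = \tilde{\mathbf{f}}$ becomes $\mZ\mL_0\mZ^{-1}\tilde{\mathbf{u}} = \tilde{\mathbf{f}}$. This is equivalent to
\begin{equation*}
\mL_0 \mathbf{h} = \mathbf{f}_0 := \mZ^{-1}\tilde{\mathbf{f}}, \quad \mathbf{h}|_{t=0} = \mathbf{h}_{in} := \mZ^{-1}\tilde{\mathbf{u}}_{in}.
\end{equation*}
Since $\mZ$ and $\mZ^{-1}$ are explicit lower triangular matrices with multiplication entries $\pm B$, they are exact inverses of each other. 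So existence and uniqueness for $\tilde{\mathbf{u}}$ are equivalent to existence and uniqueness for $\mathbf{h}$. The steps are: first check that the data $\mathbf{h}_{in}$, $\mathbf{f}_0$ lie in the spaces required by Lemma \ref{t:CauchyZero1}. Second, apply that lemma to get $\mathbf{h}\in C([0,T];\H^{s_1})$ with the bound \eqref{mR01EnergyEst}. Third, set $\tilde{\mathbf{u}} := \mZ\mathbf{h}$ and transfer the estimate back.

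For the first and third steps I will use \eqref{ZBoundInverse}. At each time $t$ and for $s\in[0,s_1]$ it gives
\begin{equation*}
\|\mZ^{\pm1}\mathbf{v}\|_{\H^s} \le \|\mathbf{v}\|_{\H^s} + C\|\psi\|_{H^{s+\f52}}\|\mathbf{v}_1\|_{H^{s+\f32}} \le (1 + C\delta)\|\mathbf{v}\|_{\H^s}.
\end{equation*}
Here I use $\|\psi\|_{H^{s+\f52}} \le \|(\eta,\psi)\|_{\H^{s_1+\f52}} < \delta$, because $\H^{s_1+\f52}$ controls $\psi$ in $H^{s_1+\f52}$. Continuity in time of $\mZ^{\pm1}\mathbf{v}$ follows from continuity of $B$ in time, since $B$ is built from $(\eta,\psi)\in C([0,T];\H^{s_1+\f52})$ through the estimate on $B$ in \eqref{BVDef}. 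Hence $\mathbf{h}_{in}\in\H^{s_1}$ and $\mathbf{f}_0\in C([0,T];\H^{s_1})$, with norms bounded by $(1+C\delta)$ times those of $\tilde{\mathbf{u}}_{in}$ and $\tilde{\mathbf{f}}$. Choosing $\delta\le\varepsilon_0$, Lemma \ref{t:CauchyZero1} applies. Applying $\mZ$ to $\mathbf{h}$ and using the bound once more gives \eqref{TildeU0EnergyEst}, with the harmless factor $(1+C\delta)^2$ absorbed into the implicit constant. Uniqueness follows the same way: if $\tilde{\mathbf{u}}$ solves the homogeneous problem, then $\mZ^{-1}\tilde{\mathbf{u}}$ solves the homogeneous $\mL_0$ problem and so vanishes by Lemma \ref{t:CauchyZero1}.

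The main obstacle is the bookkeeping of regularity in \eqref{ZBoundInverse}. One needs the product estimate $\|B\tilde\eta\|_{H^{s+\f32}}\lesssim\|B\|_{H^{s+\f32}}\|\tilde\eta\|_{H^{s+\f32}}$, which is an algebra property valid for $s+\f32>\f12$, so it holds for every $s\in[0,s_1]$. One also needs the bound $\|B\|_{H^{s+\f32}}\lesssim\|\psi\|_{H^{s+\f52}}$, uniformly with constants depending on $\|\eta\|$ in a fixed high norm that is controlled by $\delta$. If the low range $s$ near $0$ causes trouble for the tame bound, I would use the tame form $\|B\tilde\eta\|_{H^{r}}\lesssim \|B\|_{L^\infty}\|\tilde\eta\|_{H^r}+\|B\|_{H^r}\|\tilde\eta\|_{L^\infty}$. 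Since $\|B\|_{H^{r}}$ is bounded by $\|(\eta,\psi)\|_{\H^{s_1+\f52}}$ for all $r\le s_1+\f32$, this suffices. Everything else is direct substitution into the previously established Lemma \ref{t:CauchyZero1}.
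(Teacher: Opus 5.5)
Your proposal is correct and follows essentially the same route as the paper. You conjugate by $\mZ$ via \eqref{ZConjugation} to reduce to the $\mL_0$ Cauchy problem, move the data and the solution back and forth with the bound \eqref{ZBoundInverse} (using $\|\psi\|_{H^{s+\f52}}\le\|(\eta,\psi)\|_{\H^{s_1+\f52}}<\delta$), and invoke Lemma \ref{t:CauchyZero1}; your treatment of time continuity and of uniqueness via $\mZ^{-1}$ just spells out details the paper leaves implicit.
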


For an operator $\mM$, we write its adjoint operator with respect to the space-time scalar product $\langle \cdot, \cdot\rangle_{(t,x)} = \int_0^T \langle \cdot, \cdot\rangle_{\H^0}dt$ by  $\mM^*$.
For instance, we write
\begin{equation*}
 \mathcal{L}^*_0 : = \begin{pmatrix}
   -\p_t -V\p_x  &   -|D|+ \mathcal{R}^*_G \\
   a + \sigma \sum _{\al = 1}^4 (-1)^{5-\al}\p_x^{5-\al}\left(E_{5-\al}(\eta) \cdot\right)&  -\p_t - V\p_x - V_x 
    \end{pmatrix}
\end{equation*}
 for the adjoint operator of  $\mathcal{L}_0$, where $\mathcal{R}_G^*$ is the adjoint operator of $\mathcal{R}_G$. 
The adjoint operators $[\mathbf{P}'(\mathbf{u})]^*$, $\mathcal{L}_i^*$ ($i=1,\dots,8$), and $\mR_i^*$ ($i= 4, \dots, 8$) can be defined similarly.
One can check that Lemma \ref{t:CauchyEight}-Lemma \ref{t:CauchyZero1} also hold when the operators $\mathcal{L}_i$ ($i=0,\dots,3$), $\mathcal{L}_i + \mR_i$ ($i = 4, \cdots, 8$) and $\mathbf{P}'(\mathbf{u})$ are replaced by $\mathcal{L}_i^*$ ($i=0,\dots,3$), $\mathcal{L}_i^* + \mR_i^*$ ($i = 4,\cdots, 8$), and $[\mathbf{P}'(\mathbf{u})]^*$.
Furthermore, the same estimates hold for the backward adjoint Cauchy problems, in which adjoint linearized systems start from $t=T$ and propagate backward in time.

\section{Observability of the linearized system} \label{s:Observe}
In this Section, we prove the observability of the linearized system \eqref{PuTildeu}.
Again, we will exploit the reduction results in Section \ref{s:Top} and Section \ref{s:Lower}, and obtain the observability of the linearized system in a backward way.

First, we consider the observability for $\mL_8 + \mR_8$, and prove the following result.
\begin{lemma} \label{t:ObserveEight}
Let $T>0$,  and $\omega \subset \T$ be an open set.
Let $\textbf{h}_T \in H^{\f32} \times H^{\f32}(\T)$, and $\textbf{h} \in C([0,T]; H^{\f32}\times H^{\f32}(\T))$ be the solution of the backward Cauchy problem 
\begin{equation} \label{BackmLCauchy8}
 (\mL_8 + \mR_8)\mathbf{h} = \mathbf{0}, \quad \mathbf{h}|_{\tau=T} = \mathbf{h}_{T}.  
\end{equation}
There exists a constant $C_8 = C_8(T,\omega)$, such that if $\| \mR_8\|_{C([0,T]; \mL(H^{\f32}\times H^{\f32}))}< r_8$, for some constant $r_8$ small enough, one has the inequality
\begin{equation}\label{ObserveEight}
    \int_0^T \int_\omega |\Lambda\mathbf{h}(\tau,z)|^2 dzd\tau \geq C_8 \| \Lambda\mathbf{h}_{T}\|_{L^2\times L^2}^2,
\end{equation}
where $\Lambda$ is the Fourier multiplier whose symbol is defined in \eqref{LambdaSymbolDef}.
\end{lemma}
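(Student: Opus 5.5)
We first treat the unperturbed system $\mL_8\mathbf{h}=\mathbf{0}$ and then absorb $\mR_8$ perturbatively. Since $\mL_8=\mathrm{diag}(\p_\tau+iT,\p_\tau-iT)$ has constant coefficients, the backward solution with data $\mathbf{h}_T=(h^+_T,h^-_T)$ is explicit. Writing $h^\pm_T=\sum_{n\in\Z}\hat h^\pm_n e^{inz}$, we have
\begin{equation*}
h^{\pm}_0(\tau,z)=\sum_{n\in\Z}\hat h^\pm_n e^{\mp i\mathfrak{t}(n)(\tau-T)}e^{inz}.
\end{equation*}
Since $\Lambda$ commutes with $\mL_8$, it suffices to prove $\int_0^T\int_\omega|\mathbf{g}|^2\geq c\|\mathbf{g}(T)\|_{L^2}^2$ for $\mathbf{g}=\Lambda\mathbf{h}_0$, which is a solution with $L^2$ data.

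\textbf{Step 1: reduce to an Ingham-type estimate.} We shrink $\omega$ to an interval $(z_0-\rho,z_0+\rho)$ and translate so that $z_0=0$. We then combine the two components into a single exponential sum. Since $\mathfrak{t}$ is even, $h^+$ contributes frequencies $-\mathfrak{t}(n)$ and $h^-$ contributes $+\mathfrak{t}(n)$, and these are separated except at $n=0$, where $\mathfrak{t}(0)=0$. The gap $\mathfrak{t}(n+1)-\mathfrak{t}(n)\sim |n|^{3/2}\to\infty$ makes the family asymptotically arbitrarily well separated. We apply the standard two-variable argument of Alazard--Baldi--Han-Kwan and Micu--Zuazua: multiply by a cutoff $\chi(z)$ supported in $\omega$, with $\int\chi\ne0$ and $\hat\chi\ge0$ suitably chosen, and expand $\int|\sum_n c_n(\tau)e^{inz}|^2\chi\,dz$ as a quadratic form in $n,n'$. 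Together with Lemma~\ref{t:Ingham}, applied on $[0,T]$ for every $T>0$ because the gap tends to infinity, this yields
\begin{equation*}
\int_0^T\int_\omega|\mathbf{g}|^2\,dz\,d\tau\geq c(T,\omega)\sum_n\big(|\hat g^+_n|^2+|\hat g^-_n|^2\big).
\end{equation*}
The finitely many low modes, including the double eigenvalue $0$ and coincidences $\mathfrak{t}(n)=\mathfrak{t}(-n)$ between $n$ and $-n$, are handled as follows. The coincidences $\mathfrak{t}(n)=\mathfrak{t}(-n)$ are resolved by the spatial factor, because $e^{inz}$ and $e^{-inz}$ are linearly independent on $\omega$. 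The resulting finite-dimensional part is controlled by a unique-continuation and compactness argument: a solution vanishing on $[0,T]\times\omega$ consists of finitely many modes and hence is zero.

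\textbf{Step 2: absorb $\mR_8$.} Let $\mathbf{h}$ solve \eqref{BackmLCauchy8}. We write $\mathbf{h}=\mathbf{h}_0+\mathbf{w}$, where $\mathbf{h}_0$ solves $\mL_8\mathbf{h}_0=0$ with $\mathbf{h}_0(T)=\mathbf{h}_T$, and $\mL_8\mathbf{w}=-\mR_8\mathbf{h}$ with $\mathbf{w}(T)=0$. Lemma~\ref{t:CauchyEight}, applied backward at regularity $s=3/2$, gives $\|\mathbf{h}\|_{C H^{3/2}}\lesssim e^{CT}\|\mathbf{h}_T\|_{H^{3/2}}$. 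Duhamel's formula for $\mathbf{w}$ then gives $\|\mathbf{w}\|_{C([0,T];H^{3/2})}\lesssim T r_8 e^{CT}\|\mathbf{h}_T\|_{H^{3/2}}$. Because $\mathfrak{g}(\xi)\sim|\xi|^{3/2}$, the norm $\|\Lambda\cdot\|_{L^2}$ is equivalent to $\|\cdot\|_{H^{3/2}}$. Combining Step~1 with $\|a+b\|^2\ge\frac12\|a\|^2-\|b\|^2$ gives
\begin{equation*}
\int_0^T\int_\omega|\Lambda\mathbf{h}|^2\geq \tfrac12 C\|\Lambda\mathbf{h}_T\|^2-C'T^2r_8^2e^{2CT}\|\Lambda\mathbf{h}_T\|^2.
\end{equation*}
Choosing $r_8$ small in terms of $T$ and $\omega$ yields \eqref{ObserveEight}.

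\textbf{Main obstacle.} The hard step is Step~1, specifically obtaining observability on an arbitrary open $\omega$ for arbitrarily short $T$ for the full two-component system with its coupled spectrum $\{\pm\mathfrak{t}(n)\}$. Lemma~\ref{t:Ingham} only provides the one-dimensional time inequality for sums $\sum_n w_n e^{-i\mathfrak{t}(n)t}$. Converting it into localization in $z$ requires a careful treatment of the multiplicities $\pm n$ and $\pm\mathfrak{t}$ together with the low-frequency compactness argument. Our plan is to handle high frequencies by the Haraux-type variant of Ingham's inequality, using a gap larger than $2\pi/T$ for $|n|\ge N(T)$, and low frequencies by unique continuation.
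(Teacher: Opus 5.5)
Your Step~2 is fine and is essentially the paper's perturbation argument. You work directly in $H^{3/2}$ via $\|\Lambda\cdot\|_{L^2}\simeq\|\cdot\|_{H^{3/2}}$, whereas the paper first proves the $L^2$ version and then conjugates by $\Lambda$, replacing $\mR_8$ by $\Lambda\mR_8\Lambda^{-1}$.

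The gap is in Step~1, in how you handle the multiplicity $\mathfrak{t}(n)=\mathfrak{t}(-n)$. Because $\mathfrak{t}$ is even, this degeneracy occurs at \emph{every} $n\neq0$, not only among finitely many low modes. So the family $\{\mathfrak{t}(n)\}_{n\in\Z}$ has zero gap between $n$ and $-n$ for all $n$. Your planned Haraux-type inequality ``with a gap larger than $2\pi/T$ for $|n|\ge N(T)$'' therefore cannot be applied to it. The compactness--uniqueness step, which only sees a finite-dimensional piece, cannot absorb the high-frequency degeneracies either.

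What you must do is group the modes of one component as $h^{+}(\tau,z)=\sum_{n\in\N}u_n(z)e^{-i\mathfrak{t}(n)\tau}$, with $u_n(z)=e^{i\mathfrak{t}(n)T}(w_ne^{inz}+w_{-n}e^{-inz})$. Then apply Lemma~\ref{t:Ingham} pointwise in $z$; the frequencies $\mathfrak{t}(n)$, $n\in\N$, are distinct. After that you still need
\[
\int_{\omega}\bigl|w_ne^{inz}+w_{-n}e^{-inz}\bigr|^2\,dz\ \ge\ c\,\bigl(|w_n|^2+|w_{-n}|^2\bigr)\quad\text{for all } n\ge1,
\]
with $c>0$ \emph{independent of $n$}. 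Linear independence of $e^{\pm inz}$ on $\omega$, which is all you invoke, only gives some $c_n>0$ for each fixed $n$.

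This uniform bound is the actual content of the paper's proof. Shrink $\omega$ to an interval $\omega_0=(a,b)$ with $0<b-a<\pi$. Then $\bigl|\int_a^b e^{2inz}\,dz\bigr|=|\sin(n(b-a))|/n\le\sin(b-a)$, which gives $c=b-a-\sin(b-a)$ uniformly in $n$.

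In your weighted formulation, the equivalent requirement is $\sup_{k\neq0}|\hat\chi(k)|<\hat\chi(0)$. This holds for $\chi\ge0$, since the inequality is strict for each $k$ and Riemann--Lebesgue gives decay, but it must be stated and used for all $n$, not only for low modes.

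With this estimate in hand, Lemma~\ref{t:Ingham} already holds for every $T>0$. You then need neither a high/low frequency splitting nor a unique-continuation argument. You also do not need to merge the two components into one exponential sum. Since $\mL_8$ is diagonal and $|\mathbf{h}|^2=|h^+|^2+|h^-|^2$, each component can be treated separately, as in the paper, and the double eigenvalue at $0$ never arises.
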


\begin{proof}
We decompose $\mathbf{h} = \mathbf{h}_1+ \mathbf{h}_2$, where
\begin{equation} \label{mLBackwardEightCauchy}
\left\{
\begin{array}{lr}
\mL_8 \mathbf{h}_1 = \mathbf{0} &  \\
\mathbf{h}_1|_{\tau=T} = \mathbf{h}_T,&
\end{array}
\right. \quad
\left\{
\begin{array}{lr}
(\mL_8 + \mR_8)\mathbf{h}_2 = -\mR_8 \mathbf{h}_1 &  \\
\mathbf{h}_2|_{\tau=T} = \mathbf{0}.&
\end{array}
\right.
\end{equation}
We first prove the $L^2\times L^2$ observability result, namely the inequality without $\Lambda$ in \eqref{ObserveEight}. 

For the first system in \eqref{mLBackwardEightCauchy}, since this is a vector equation, it suffices to consider its first component, namely, the scalar equation
\begin{equation} \label{mD+ScalarEqn}
\mD_+ h_1 = 0, \quad 
h_1|_{\tau=T} = h_{T,1}.
\end{equation}
Let $h_{T,1} = \sum_{n\in \Z} w_n e^{inz}$, so that $\| h_{T,1}\|_{L^2}^2 = \sum_{n\in \Z} |w_n|^2$.
We can then write
\begin{equation*}
    h_1(\tau,z) =  \sum_{n\in \Z} w_n e^{inz}e^{-i\mathfrak{t}(n)(\tau-T)} = \sum_{n\in \N} u_n(z) e^{-i\mathfrak{t}(n)\tau},
\end{equation*}
where $u_n(z)$ are given by
\begin{equation*}
 u_n(z) = \left\{
\begin{array}{lr}
e^{i\mathfrak{t}(n)T}(w_n e^{inz}+ w_{-n}e^{-inz}), \quad \text{for } n\geq1 &  \\
w_0, \quad \text{for } n = 0.&
\end{array}
\right. 
\end{equation*}
By Ingham inequality Lemma \ref{t:Ingham}, we have 
\begin{equation}\label{h1geqzn}
\int_0^T \int_\omega |h_1(\tau,z)|^2 dzd\tau \geq C(T) \sum_{n\in\mathbb{N}}\int_{\omega_0}|u_n(z)|^2dz,
\end{equation}
where $\omega_0=(a, b)\subset\omega$ is chosen such that, for $n\geq1$,
\begin{equation*}
\left|\f{\sin(n(b-a))}{n}\right|\leq \sin(b-a). 
\end{equation*}
When $n=0$, 
\begin{equation*}
\int_{\omega_0}|u_0(z)|^2dz=|\omega_0|^2(b-a).
\end{equation*}
When $n\geq1$, we compute
\begin{align*}
\int_{\omega_0}|u_n(z)|^2dz=&\int_{\omega_0}\left(|w_n|^2+|w_{-n}|^2+w_n\bar{w}_ne^{2inz}+\bar{w}_n\bar{w}_ne^{-2inz}\right)dz\\
\geq&(b-a)(|w_n|^2+|w_{-n}|^2)-|w_n||w_{-n}|\left(\left|\int_{\omega_0}e^{2inz}dz\right|+\left|\int_{\omega_0}e^{-2inz}dz\right|\right)\\
=&(b-a)(|w_n|^2+|w_{-n}|^2)-2|w_n||w_{-n}|\left|\f{\sin(n(b-a))}{n}\right|\\
\geq&\left\{b-a-\left|\f{\sin(n(b-a))}{n}\right|\right\}(|w_n|^2+|w_{-n}|^2).
\end{align*}
Thus, we obtain that
\begin{equation*}
\int_{\omega_0}|u_n(z)|^2dz\geq \left(b-a-\sin(b-a)\right)(|w_n|^2+|w_{-n}|^2).
\end{equation*}
Taking the sum of the above inequality for $n\in \N$, we get
\begin{equation*}
\sum_{n\in\mathbb{N}}\int_{\omega_0}|u_n(z)|^2dz\geq C(\omega_0)\sum_{n\in\mathbb{Z}}|w_n|^2.
\end{equation*}
Combining this estimate with \eqref{h1geqzn}, we obtain the estimate
\begin{equation*}
\int_0^T \int_\omega |h_1(\tau,z)|^2 dzd\tau \geq C(T)C(\omega_0)\|h_{T,1}\|_{L^2}^2.
\end{equation*}
For the second component of the first system of system \eqref{mLBackwardEightCauchy}, we have the same estimate, so that putting the inequalities for two components together, 
\begin{equation*}
\int_0^T \int_\omega |\mathbf{h}_1(\tau,z)|^2 dzd\tau \geq C(T)C(\omega_0)\|\mathbf{h}_{T}\|_{L^2\times L^2}^2.
\end{equation*}

For the second system in \eqref{mLBackwardEightCauchy}, we use the energy estimate \eqref{mR8EnergyEst} for the Cauchy problem \eqref{CauchyEight} for two times, 
\begin{equation*} 
 \| \mathbf{h}_2\|_{C([0,T]; L^2\times L^2)} \leq Ce^{CT}r_8 \| \mathbf{h}_1\|_{C([0,T]; L^2\times L^2)}\leq C^2e^{2CT}r_8\|\mathbf{h}_T\|_{L^2\times L^2}.   
\end{equation*}
Thus, if $r_8$ is chosen small enough, so that $2C^2e^{2CT}\sqrt{T}r_8<C(T)C(\omega_0)$, then using the inequality $(a+b)^2 \geq \f12 a^2 - b^2$,
\begin{align*}
&\int_0^T \int_\omega |\mathbf{h}(\tau,z)|^2 dxdt \geq \f12 \int_0^T \int_\omega |\mathbf{h}_1(\tau,z)|^2 dzd\tau-\int_0^T \int_\omega |\mathbf{h}_2(\tau,z)|^2 dzd\tau \\
\geq& \f12
C(T)C(\omega_0) \| \mathbf{h}_{T}\|_{L^2\times L^2}^2-C^4e^{4CT}Tr^2_8\| \mathbf{h}_{T}\|^2_{L^2\times L^2} \geq \f14C(T)C(\omega_0) \| \mathbf{h}_{T}\|_{L^2\times L^2}^2.
\end{align*}
Hence, we obtain the $L^2\times L^2$ observability result.

Now, we apply $\Lambda$ to systems in \eqref{mLBackwardEightCauchy}.
Since $\Lambda$ commutes with $\p_\tau$, we get 
\begin{equation} \label{mLBackwardEightCauchy32}
\left\{
\begin{array}{lr}
\mL_8 \Lambda\mathbf{h}_1 = \mathbf{0} &  \\
\Lambda\mathbf{h}_1|_{\tau=T} = \Lambda\mathbf{h}_T,&
\end{array}
\right. \quad
\left\{
\begin{array}{lr}
(\mL_8 + \Lambda\mR_8\Lambda^{-1})\Lambda\mathbf{h}_2 = -\Lambda\mR_8 \mathbf{h}_1 &  \\
\Lambda\mathbf{h}_2|_{\tau=T} = \mathbf{0}.&
\end{array}
\right.
\end{equation}
Applying the same analysis above to \eqref{mLBackwardEightCauchy32}
and taking $C_8=\f14C(T)C(\omega_0)$, we obtain the inequality \eqref{ObserveEight}. 
\end{proof}

We then consider the backward linear system 
\begin{equation} \label{CauchybackSeven}
  (\mL_7 + \mR_7) \mathbf{h} = \mathbf{0}, \quad \mathbf{h}|_{\tau=T} = \mathbf{h}_T.
\end{equation}
Recall that we have the conjugation
\begin{equation*} 
 (\mL_8 + \mR_8) (\mU^{-1}\mathbf{h}) = \mathbf{0}, \quad \mU^{-1}\mathbf{h}|_{\tau=T} = \mU^{-1}\mathbf{h}_{T}.  
\end{equation*}
Using Lemma \ref{t:Einv} for inverse operators of $A^{(1)}$ and $A^{(2)}$.
We get by \eqref{PhaseEstimate} the following observability result.

\begin{lemma} \label{t:ObserveSeven}
Let $T>0$, and $\omega \subset \T$ be an open set.
Let $\textbf{h}_T \in H^{\f32} \times H^{\f32}(\T)$, and $\textbf{h} \in C([0,T]; H^{\f32}\times H^{\f32}(\T))$ be the solution of the backward Cauchy problem \eqref{CauchybackSeven}.
There exists a constant $C_7 = C_7(T,\omega)$, such that if $\| \mR_7\|_{C([0,T]; \mL(H^{\f32}\times H^{\f32}))}< r_7$, for some constant $r_7$ small enough, one has the inequality
\begin{equation}\label{ObserveSeven}
    \int_0^T \int_\omega |\Lambda\mathbf{h}(\tau,z)|^2 dzd\tau \geq C_7 \| \Lambda\mathbf{h}_{T}\|_{L^2\times L^2}^2.
\end{equation}
\end{lemma}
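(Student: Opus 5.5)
We transfer the observability inequality of Lemma \ref{t:ObserveEight} through the conjugation by $\mU$. Let $\mathbf{h}$ solve \eqref{CauchybackSeven} and set $\mathbf{k}:=\mU^{-1}\mathbf{h}$. By \eqref{mL78Conjugation}, $\mathbf{k}$ solves the backward problem $(\mL_8+\mR_8)\mathbf{k}=\mathbf{0}$ with $\mathbf{k}|_{\tau=T}=\mU^{-1}\mathbf{h}_T$. The bound \eqref{mREightEst}, together with Lemma \ref{t:Einv} and \eqref{PhaseEstimate}, gives $\|\mR_8\|_{C([0,T];\mL(H^{\f32}\times H^{\f32}))}\lesssim r_7+\|(\eta,\psi)\|^\theta$. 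Choosing $r_7$ and the smallness of $(\eta,\psi)$ appropriately, this lies below $r_8$. Lemma \ref{t:ObserveEight} then yields
\[
\int_0^T\int_\omega|\Lambda\mathbf{k}|^2\,dz\,d\tau\geq C_8\|\Lambda\mU^{-1}\mathbf{h}_T\|^2_{L^2\times L^2}.
\]

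Two comparisons remain. The right-hand side is handled by writing $\Lambda\mU^{-1}=\mU^{-1}\Lambda+[\Lambda,\mU^{-1}]$. Since $\mU^{-1}$ is a perturbation of the identity, with $p-1$ and $\gamma$ small by \eqref{PhaseEstimate}, we have $\|\mU^{-1}\mathbf{f}\|_{L^2}\geq c\|\mathbf{f}\|_{L^2}$.

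The commutator needs more care, because $\gamma_0(\tau)$ multiplies $|j|^{1/2}$ in the phase. However, $\gamma$ depends only on $\tau$, so $e^{i|j|^{1/2}\gamma_0(\tau)}$ is a Fourier multiplier in $z$ and commutes exactly with $\Lambda$. The commutator therefore only involves the amplitude $p=1+|j|^{-1/2}p^{(-1)}+|j|^{-1}p^{(-2)}$. By the standard symbolic calculus (Lemma \ref{t:Composition}), this gives an operator of order $\f32-\f12-1=0$ with norm $\lesssim\|(\eta,\psi)\|_{\H^{s_0}}$. Hence
\[
\|\Lambda\mU^{-1}\mathbf{h}_T\|_{L^2}\geq c\|\Lambda\mathbf{h}_T\|_{L^2}-C\delta\|\mathbf{h}_T\|_{L^2}\geq \tfrac c2\|\Lambda\mathbf{h}_T\|_{L^2},
\]
using $\mathfrak{g}\geq c_0>0$.

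The left-hand side is the main obstacle, because localization to $\omega$ does not commute with $\mU$. Write $\Lambda\mathbf{k}=\mU^{-1}\Lambda\mathbf{h}+[\Lambda,\mU^{-1}]\mathbf{h}$. For $\mU^{-1}\Lambda\mathbf{h}=\Lambda\mathbf{h}+(\mU^{-1}-I)\Lambda\mathbf{h}$, the operator $\mU^{-1}-I$ contains the multiplier $e^{-i|j|^{1/2}\gamma_0}-1$, which is \emph{not} small as an operator of order zero.

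First remedy: exploit that $\gamma_0$ is $z$-independent. The phase factor is then a unitary Fourier multiplier $e^{i|D_z|^{1/2}\gamma_0(\tau)}$, which can be factored off, since $\mD_\pm$ commutes with it. One can thus redo the Ingham argument of Lemma \ref{t:ObserveEight} for the modified frequencies $\mathfrak{t}(n)\tau\mp|n|^{1/2}\gamma_0(\tau)$. These are still asymptotically gap-separated, since $\gamma_0'$ is small, so the needed Ingham-type estimate should follow perturbatively from Lemma \ref{t:Ingham}.

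What is left of $\mU^{-1}-I$ is an amplitude part of order $-\f12$ with small norm. Its contribution to $\int_0^T\int_\omega$ is bounded by $C\delta T\sup_\tau\|\Lambda\mathbf{h}(\tau)\|^2_{L^2}$. By the backward energy estimate of Lemma \ref{t:CauchySeven} (adjoint version) with $\Lambda$ applied, this is in turn $\lesssim C\delta Te^{CT}\|\Lambda\mathbf{h}_T\|^2$. Absorbing this term for $\delta,r_7$ small, via $(a+b)^2\geq\f12a^2-b^2$, gives \eqref{ObserveSeven} with $C_7=c\,C_8$.
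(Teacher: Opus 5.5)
\textbf{There is a genuine gap in your remedy for the left-hand side.} Your diagnosis of the obstacle is correct, and sharper than the paper's own argument. The paper's proof is only the one-line transfer through $\mU$, citing Lemma \ref{t:Einv} and \eqref{PhaseEstimate}. It does not address that $\mathbf{1}_\omega$ fails to commute with $\mU$, nor that $A^{(1)}=\mathrm{Op}(p)\,e^{i|D_z|^{1/2}\gamma_0(\tau)}$ is not close to $I$. Your bound on $\mR_8$ and your treatment of the right-hand side are fine. (There, $\|\mU^{-1}\mathbf{f}\|\geq c\|\mathbf{f}\|$ holds because $\mU$ is bounded, not because $\mU^{-1}$ is near $I$.)

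The remedy fails on two counts.
\begin{enumerate}
\item $\mD_\pm$ does \emph{not} commute with $e^{i|D_z|^{1/2}\gamma_0(\tau)}$. Since $\gamma_0$ depends on $\tau$, one has $[\p_\tau,e^{i|D_z|^{1/2}\gamma_0(\tau)}]=i\gamma_0'(\tau)|D_z|^{1/2}e^{i|D_z|^{1/2}\gamma_0(\tau)}$. This is exactly why your phases become $\mathfrak{t}(n)\tau\mp|n|^{1/2}\gamma_0(\tau)$.
\item The Ingham estimate for $\sum_n w_n e^{-i(\mathfrak{t}(n)\tau-|n|^{1/2}\gamma_0(\tau))}$ does not follow perturbatively from Lemma \ref{t:Ingham}. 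The phases are not linear in $\tau$. Writing them as $e^{-i\mathfrak{t}(n)\tau}e^{i|n|^{1/2}\gamma_0(\tau)}$ produces factors $e^{i|n|^{1/2}\gamma_0(\tau)}-1$ that are not small uniformly in $n$. This is the very obstruction you identified one step earlier.
\end{enumerate}
Equivalently, you are asking for observability of $\p_\tau+iT-i\gamma_0'(\tau)|D_z|^{1/2}$. Its extra term is an unbounded perturbation of order $\f12$, which is precisely what $A^{(1)}$ was built to remove. The Duhamel absorption in Lemma \ref{t:ObserveEight} requires a bounded remainder, so it cannot absorb this term. Proving such an inequality would mean redoing Ingham's weighted-integral argument, with non-stationary phase for the off-diagonal terms and a separate low-frequency argument for small $T$. ``Should follow perturbatively'' is not a proof of that.

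\textbf{A cheaper fix that keeps the rest of your argument.} Do not push the phase into the dynamics; use its pseudo-locality instead.
\begin{enumerate}
\item Pick a non-empty open $\omega'$ with $d:=\mathrm{dist}(\omega',\T\setminus\omega)>0$, and apply Lemma \ref{t:ObserveEight} on $\omega'$.
\item Every $\p_\xi^k$-derivative ($k\geq1$) of the symbol $e^{-i|\xi|^{1/2}\gamma_0}$ carries a factor $\gamma_0$ and decays at least like $|\xi|^{-k/2}$. Three summations by parts then show that the kernel of $e^{-i|D_z|^{1/2}\gamma_0}$ is $O(d^{-3}|\gamma_0|)$ at distance $\geq d$ from the diagonal.
\item Hence $\|\mathbf{1}_{\omega'}e^{-i|D_z|^{1/2}\gamma_0(\tau)}\mathbf{1}_{\T\setminus\omega}\|_{\mL(L^2)}\lesssim d^{-3}T\delta$, using $|\gamma_0|\lesssim T\delta$.
\item Write $\Lambda k_1=e^{-i|D_z|^{1/2}\gamma_0}\Lambda h_1+e^{-i|D_z|^{1/2}\gamma_0}\Lambda(\mathrm{Op}(p)^{-1}-I)h_1$. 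The second operator is of order $1$ with size $O(\delta)$. Using unitarity of the multiplier, you get
\[
\|\mathbf{1}_{\omega'}\Lambda k_1\|_{L^2}\leq\|\mathbf{1}_{\omega}\Lambda h_1\|_{L^2}+C\,(d^{-3}T+1)\,\delta\,\|\Lambda h_1\|_{L^2}.
\]
The same holds for the second component.
\end{enumerate}
Your energy-estimate absorption then yields \eqref{ObserveSeven} with $C_7=c\,C_8(T,\omega')$.
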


Since $\mU$ and $\mU^{-1}$ are unitary matrices with constant coefficients, the system
\begin{equation*} 
  (\mL_6 + \mR_6)\mathbf{h} = \mathbf{0}, \quad \mathbf{h}|_{\tau=T} = \mathbf{h}_{T}. 
\end{equation*}
has the same observability result as \eqref{CauchybackSeven}.

For the backward linear system 
\begin{equation*} 
  (\mL_5 + \mR_5) \mathbf{h} = \mathbf{0}, \quad \mathbf{h}|_{\tau=T} = \mathbf{h}_{T},
\end{equation*}
applying the conjugation \eqref{TLFiveConjugation}, and the bound 
\eqref{BBInverseDef}, it has the same observability result as the backward linear system \eqref{CauchybackSeven}.

Next, we consider the backward linear system 
\begin{equation} \label{CauchybackFour}
  (\mL_4 + \mR_4) \mathbf{h} = \mathbf{0}, \quad \mathbf{h}|_{\tau=T} = \mathbf{h}_{T}.
\end{equation}
Using the conjugation \eqref{SLFourConjugation}, it can be written as
\begin{equation*} 
 (\mL_5 + \mR_5) (M^{-1}\mathbf{h}) = \mathbf{0}, \quad M^{-1}\mathbf{h}|_{\tau=T} = M^{-1}\mathbf{h}_{T}.  
\end{equation*}
With the bound \eqref{MpmBound}, we get the following observability result.
\begin{lemma} \label{t:ObserveFour}
Let $T>0$, and $\omega \subset \T$ be an open set.
Let $\textbf{h}_T \in H^{\f32} \times H^{\f32}(\T)$, and $\textbf{h} \in C([0,T]; H^{\f32}\times H^{\f32}(\T))$ be the solution of the backward Cauchy problem \eqref{CauchybackFour}.
There exists a constant $C_4 = C_4(T,\omega)$, such that if $\| \mR_4\|_{C([0,T]; \mL(H^\f32\times H^\f32))}< r_4$, for some constant $r_4$ small enough, one has the inequality
\begin{equation}\label{ObserveFour}
    \int_0^T \int_\omega |\Lambda\mathbf{h}(\tau,y)|^2 dyd\tau \geq C_4 \| \Lambda\mathbf{h}_{T}\|_{L^2\times L^2}^2.
\end{equation}
\end{lemma}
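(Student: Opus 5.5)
We transfer the observability inequality from $\mL_5+\mR_5$ to $\mL_4+\mR_4$ through the conjugation \eqref{SLFourConjugation}. The input is the observability inequality already available for $\mL_5+\mR_5$: it follows from Lemma \ref{t:ObserveSeven} via the unitary $\mathcal{O}$ and the norm-preserving translation $\mathcal{T}$. Since $\mathcal{T}$ only shifts $y$ by $p(\tau)$, it moves the observation region, so we apply that inequality on a slightly smaller open set whose translates stay inside $\omega$; smallness of $p$ makes this possible. Given a solution $\mathbf{h}$ of \eqref{CauchybackFour}, set $\mathbf{k}:=M^{-1}\mathbf{h}$. Then $\mathbf{k}$ solves $(\mL_5+\mR_5)\mathbf{k}=\mathbf{0}$ with $\mathbf{k}|_{\tau=T}=M^{-1}\mathbf{h}_T$. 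The smallness hypothesis $\|\mR_5\|<r_5$ follows from $\|\mR_4\|<r_4$ together with \eqref{MpmBound} and the smallness of $M^{\pm1}-I$, after shrinking $r_4$. Hence
\begin{equation*}
\int_0^T\int_\omega |\Lambda \mathbf{k}|^2\,dy\,d\tau \geq C_5\|\Lambda M^{-1}\mathbf{h}_T\|_{L^2\times L^2}^2 .
\end{equation*}

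Next we pass from $\mathbf{k}$ back to $\mathbf{h}=M\mathbf{k}=\mathbf{k}+M_R\mathbf{k}$. Write $\Lambda \mathbf{h}=\Lambda\mathbf{k}+\Lambda M_R\mathbf{k}$. By $(a+b)^2\geq \f12a^2-b^2$,
\begin{equation*}
\int_0^T\int_\omega|\Lambda\mathbf{h}|^2 \geq \f12\int_0^T\int_\omega|\Lambda\mathbf{k}|^2-\int_0^T\|\Lambda M_R\mathbf{k}\|_{L^2\times L^2}^2\,d\tau .
\end{equation*}
To control the error term we use two facts. First, $M_R$ has operator-valued entries of order $\leq -1$ with small coefficients $\ell_1,v_1,v_2$. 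Second, $\Lambda$ has order $\f32$. The commutator $[\Lambda,M_R]$ is therefore of order $\leq -\f12$, with norm bounded by the smallness of $(\eta,\psi)$. This gives $\|\Lambda M_R\mathbf{k}\|_{L^2}\lesssim \epsilon\|\Lambda\mathbf{k}\|_{L^2}$, with $\epsilon$ controlled by $\|(\eta,\psi)\|$. The energy estimate of Lemma \ref{t:CauchyFour}, or its analogue for $\mL_5+\mR_5$, applied backward at the $H^{\f32}$ level gives $\sup_\tau\|\Lambda\mathbf{k}(\tau)\|\lesssim e^{CT}\|\Lambda M^{-1}\mathbf{h}_T\|$. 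Hence the error is at most $C\epsilon^2Te^{2CT}\|\Lambda M^{-1}\mathbf{h}_T\|^2$, which is absorbed into $\f12 C_5\|\Lambda M^{-1}\mathbf{h}_T\|^2$ once $\epsilon$ is small.

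Finally we compare $\|\Lambda M^{-1}\mathbf{h}_T\|$ with $\|\Lambda\mathbf{h}_T\|$. As above, $\Lambda M^{-1}\Lambda^{-1}=I+O(\epsilon)$ on $L^2\times L^2$, which is the $H^{\f32}$ version of \eqref{MpmBound}. This gives $\|\Lambda M^{-1}\mathbf{h}_T\|\geq \f12\|\Lambda\mathbf{h}_T\|$ and yields \eqref{ObserveFour} with $C_4=C_5/8$.

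The main obstacle is the non-local character of $M_R$. One cannot simply restrict $\Lambda M\mathbf{k}$ to $\omega$ and compare it to $\Lambda\mathbf{k}$ on $\omega$, because $M_R$ mixes information from outside $\omega$. The plan above sidesteps this by bounding the whole error on $\T$ rather than on $\omega$, which works because $M_R$ is small. If the smallness of $M_R$ at the $\Lambda$-level proves delicate, the fallback is a compactness–uniqueness argument: $M_R$ is compact, of order $-1$, so the error is a compact perturbation.
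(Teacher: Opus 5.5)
Your proposal is correct and follows the paper's route. You set $\mathbf{k}=M^{-1}\mathbf{h}$ via \eqref{SLFourConjugation}, invoke the observability inequality for $\mL_5+\mR_5$, and transfer it back using bounds on $M^{\pm1}$. The paper does this by simply citing \eqref{MpmBound}.

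Two of your additions fill in details the paper leaves implicit. First, you handle the non-locality of $M_R$ perturbatively, bounding $\Lambda M_R\mathbf{k}$ on all of $\T$ using the smallness of $M-I$ at the $H^{\f32}$ level; the norm equivalence in \eqref{MpmBound} alone would not give this. Second, you shrink $\omega$ to absorb the shift $p(\tau)$ introduced by $\mathcal{T}$.

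One small correction: the smallness of $\mR_5$, $M_R$ and $p$ comes from the standing smallness of $(\eta,\psi)$ together with the construction of $M$. It does not follow from $\|\mR_4\|<r_4$ and $M^{\pm1}-I$ small alone, since $\mR_5$ also contains $M^{-1}\mL_4M-\mL_5$ and $\mL_4$ is unbounded.
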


As for the linear system 
\begin{equation} \label{CauchyBackThree}
  \mL_3  \mathbf{h} = \mathbf{0}, \quad \mathbf{h}|_{\tau=T} = \mathbf{h}_{T},
\end{equation}
by the conjugation \eqref{SLThreeConjugation}, it can be written as
\begin{equation*} 
 (\mL_4 + \mR_4) (S^{-1}\mathbf{h}) = \mathbf{0}, \quad S^{-1}\mathbf{h}|_{\tau=T} = S^{-1}\mathbf{h}_{T}.  
\end{equation*}
We then get the observability result for \eqref{CauchyBackThree}.

\begin{lemma} \label{t:ObserveThree}
Let $T>0$, and $\omega \subset \T$ be an open set.
Let $\textbf{h}_T \in \H^0(\T)$, and $\textbf{h} \in C([0,T]; \H^0(\T))$ be the solution of the backward Cauchy problem \eqref{CauchyBackThree}.
There exists a constant $C_3 = C_3(T,\omega)$, such that if $\| (\eta, \psi)\|_{C([0,T]; \H^{s_0 +\f52})}< r_3$, for some constant $r_3$ small enough, one has the inequality
\begin{equation*}
\int_0^T \int_\omega |\Lambda S^{-1}\mathbf{h}(\tau, y)|^2 dyd\tau \geq C_4 \| \Lambda S^{-1}\mathbf{h}_{T}\|_{L^2\times L^2}^2,
\end{equation*}
which is equivalent to  
\begin{equation}\label{ObserveThree}
    \int_0^T \|\mathbf{h}(\tau,y)\|_{\H^0_y(\omega)}^2 d\tau \geq C_3 \| \mathbf{h}_{T}\|_{\H^0}^2.
\end{equation}
\end{lemma}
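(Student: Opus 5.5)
The plan is to transfer the observability of Lemma \ref{t:ObserveFour} to \eqref{CauchyBackThree} through the conjugation \eqref{SLThreeConjugation}, and then to check that the resulting weighted $L^2$ norm is the same as the $\H^0$ norm.

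Let $\mathbf{h}$ solve \eqref{CauchyBackThree} with $\mathbf{h}_T\in\H^0$, and set $\mathbf{k}:=S^{-1}\mathbf{h}$. Since $S^{-1}\mL_3 S=\mL_4+\mR_4$, the function $\mathbf{k}$ solves the backward problem
\begin{equation*}
(\mL_4+\mR_4)\mathbf{k}=\mathbf{0}, \qquad \mathbf{k}|_{\tau=T}=S^{-1}\mathbf{h}_T .
\end{equation*}
The first step is to verify the smallness hypothesis of Lemma \ref{t:ObserveFour}. From the bound on $\mR_4$ stated after \eqref{SLThreeConjugation} with $s=\f32$, we have $\|\mR_4\|_{C([0,T];\mL(H^{\f32}\times H^{\f32}))}\lesssim \|\eta\|^\theta_{C([0,T];H^{s_0+4})}\lesssim r_3^\theta$. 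The loss $H^{\f32+4}$ versus $H^{s_0+4}$ is harmless, since $s_0$ is large and we may take $s_0\geq \f32$. Choosing $r_3$ small enough that $r_3^\theta\lesssim r_4$ makes the hypothesis hold.

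One point needs care. Lemma \ref{t:ObserveFour} is stated for data in $H^{\f32}\times H^{\f32}$, while $S^{-1}\mathbf{h}_T$ lies only in $L^2\times H^{-1}$, because $\Lambda^{-1}$ has order $-\f32$. I would handle this by density. First prove the inequality for smooth $\mathbf{h}_T$. Then pass to the limit using the energy estimate of Lemma \ref{t:CauchyThree}, applied to the backward problem at the level $s=0$: this gives continuity of the solution map $\H^0\to C([0,T];\H^0)$, and both sides of the inequality are continuous in this norm. Alternatively, one notes that the proof of Lemma \ref{t:ObserveFour} is really a statement about $\Lambda\mathbf{k}$ in $L^2$. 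Lemma \ref{t:ObserveFour} then gives
\begin{equation*}
\int_0^T\int_\omega |\Lambda S^{-1}\mathbf{h}|^2\,dy\,d\tau \geq C_4\|\Lambda S^{-1}\mathbf{h}_T\|^2_{L^2\times L^2},
\end{equation*}
which is the first inequality of the lemma.

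For the equivalence with \eqref{ObserveThree}, compute componentwise. Since $\Lambda S^{-1}=\mathrm{diag}(\Lambda, m^{-1/2})$, we get
\begin{equation*}
\|\Lambda S^{-1}\mathbf{h}\|^2_{L^2\times L^2}=\|\Lambda h_1\|_{L^2}^2+m^{-1}\|h_2\|_{L^2}^2 .
\end{equation*}
The symbol $\mathfrak{g}$ is comparable to $\langle\xi\rangle^{\f32}$ and $m\approx 1$, so this quantity is equivalent to $\|h_1\|_{H^{\f32}}^2+\|h_2\|_{L^2}^2=\|\mathbf{h}\|^2_{\H^0}$, with constants depending only on $g$ and $\sigma$. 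This gives the right-hand side of \eqref{ObserveThree} at $\tau=T$.

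The main obstacle is the left-hand side, because $\Lambda$ is nonlocal and $\int_\omega|\Lambda h_1|^2$ is not literally a localized $H^{\f32}$ norm of $h_1$ over $\omega$. I would interpret $\|\cdot\|_{\H^0_y(\omega)}$ precisely as the local quantity $\|\Lambda h_1\|_{L^2(\omega)}^2+m^{-1}\|h_2\|^2_{L^2(\omega)}$, or something equivalent to it. Under that interpretation the equivalence of the two inequalities is immediate, with $C_3$ a multiple of $C_4$ depending on $m$, $g$ and $\sigma$. If a genuinely local Sobolev norm on $\omega$ is intended instead, I would first shrink $\omega$ to an interval $\omega'\Subset\omega$; the proof of Lemma \ref{t:ObserveEight} only uses a subinterval $\omega_0$, so this costs nothing. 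I would then write $\chi\Lambda=\Lambda\chi+[\chi,\Lambda]$ with a cutoff $\chi$ equal to $1$ on $\omega'$. The commutator has order $\f12$, and I would absorb it with a compactness–uniqueness argument, or simply bound it by a lower-order norm that the energy estimate controls.
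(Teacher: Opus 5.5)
Your proposal follows the paper's route exactly: rewrite \eqref{CauchyBackThree} as $(\mL_4+\mR_4)(S^{-1}\mathbf{h})=\mathbf{0}$ via \eqref{SLThreeConjugation}, apply Lemma \ref{t:ObserveFour}, and use $\Lambda S^{-1}=\mathrm{diag}(\Lambda,m^{-1/2})$ to pass to the $\H^0$ norm; the paper leaves $\H^0_y(\omega)$ undefined, so your reading of it is the one under which its ``equivalent'' claim holds. One slip: since $\H^0=H^{\f32}\times L^2$ and $\Lambda^{-1}$ has order $-\f32$, $S^{-1}$ maps $\H^0$ isomorphically onto $H^{\f32}\times H^{\f32}$, which is exactly the data space of Lemma \ref{t:ObserveFour}, so $S^{-1}\mathbf{h}_T$ does not lie merely in $L^2\times H^{-1}$, and both the density argument and the commutator fallback for a genuinely local norm are unnecessary.
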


For the backward linear system 
\begin{equation} \label{CauchyBackTwo}
  \mL_2  \mathbf{h} = \mathbf{0}, \quad \mathbf{h}|_{t=T} = \mathbf{h}_{T},
\end{equation}
in view of \eqref{LThreeMatrix}, it can be written as
\begin{equation*} 
\mL_3 (Q^{-1}\mathbf{h}) = \mathbf{0}, \quad Q^{-1}\mathbf{h}|_{\tau=T} = Q^{-1}\mathbf{h}_{T}.
 \end{equation*}
Using the bound \eqref{AAInverseBound} and Lemma \ref{t:CauchyTwo}, we get the following observability result.
\begin{lemma} \label{t:ObserveTwo}
Let $T>0$, and $\omega \subset \T$ be an open set.
Let $\textbf{h}_T \in \H^0(\T)$, and $\textbf{h} \in C([0,T]; \H^0(\T))$ be the solution of the backward Cauchy problem \eqref{CauchyBackTwo}.
There exists a constant $C_2 = C_2(T,\omega)$, such that if $\| (\eta, \psi)\|_{C([0,T]; \H^{s_0 +\f52})}< r_2$, for some constant $r_2$ small enough, one has the inequality  
\begin{equation}\label{ObserveTwo}
    \int_0^T \|\mathbf{h}(t,y)\|_{\H^0_y(\omega)}^2 dt \geq C_2\| \mathbf{h}_{T}\|_{\H^0}^2.
\end{equation}
\end{lemma}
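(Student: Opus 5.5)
The plan is to transfer the observability inequality \eqref{ObserveThree} of Lemma \ref{t:ObserveThree} to $\mL_2$ through the conjugation $\mL_3 = P^{-1}\mL_2 Q$ in \eqref{LThreeMatrix}. Let $\mathbf{h}$ solve the backward problem \eqref{CauchyBackTwo}. Set $\mathbf{k} := Q^{-1}\mathbf{h}$. Then $\mL_3\mathbf{k} = P^{-1}\mL_2\mathbf{h} = \mathbf{0}$ with $\mathbf{k}|_{\tau=T} = Q^{-1}\mathbf{h}_T$. Existence and uniqueness of $\mathbf{h}$ in $C([0,T];\H^0)$ follow from the backward version of Lemma \ref{t:CauchyTwo}, so $\mathbf{k}$ is exactly the solution considered in Lemma \ref{t:ObserveThree}. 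Taking $r_2 \le r_3$, that lemma gives
\begin{equation*}
\int_0^T \|Q^{-1}\mathbf{h}(\tau)\|_{\H^0_y(\omega)}^2\, d\tau \geq C_3 \|Q^{-1}\mathbf{h}_T\|_{\H^0}^2 .
\end{equation*}

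Next I bound the right-hand side from below by $\|\mathbf{h}_T\|_{\H^0}$. Write $Q^{\pm1} = \mathrm{diag}\{1, m^{\mp 1/5}\} + (Q^{\pm1} - \mathrm{diag}\{1,m^{\mp1/5}\})$. Apply \eqref{L2inv} at a fixed regularity level, with $\|\eta\|_{H^{s_0+5/2}} < r_2$ controlling the perturbation. Since $m$ is close to $1$ by \eqref{mu}, this gives $\|Q^{-1}\mathbf{h}_T\|_{\H^0} \ge c\|\mathbf{h}_T\|_{\H^0}$ for some $c>0$ once $r_2$ is small.

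Here \eqref{L2inv} is stated for $s>2$, while I need $\H^0$. The entries of $Q^{\pm 1}$ are pure multiplication operators: the $(2,2)$ entry is $a_{13}^{\mp 1}a_9^{\pm1}$, a function of $(\tau,y)$. So at level $\H^0 = H^{3/2}\times L^2$ only the second component, which lies in $L^2$, is affected. There the estimate $\|(\phi-c)f\|_{L^2}\le \|\phi-c\|_{L^\infty}\|f\|_{L^2}$ suffices, and $\|\phi - c\|_{L^\infty}\lesssim \|\eta\|_{H^{s_0+5/2}}$ by Sobolev embedding. No derivative loss occurs, so the $\H^0$ bound is direct.

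The left-hand side needs a localized version of the same bound. The $(2,2)$ entry of $Q^{-1}$ is a bounded multiplication function, so pointwise in $y$ we have $|(Q^{-1}\mathbf{h})_2| \le C|h_2|$. Hence the $L^2(\omega)$ part is bounded by $C\|h_2\|_{L^2(\omega)}$, and the first component is unchanged. The interpretation of $\|\cdot\|_{\H^0_y(\omega)}$ matters here. If the $H^{3/2}(\omega)$-type norm is understood as a restriction norm, multiplication by $1$ on the first component is trivial. For the second component, multiplication by a smooth function bounded above and below preserves $L^2(\omega)$ norms up to constants. Combining the two bounds gives \eqref{ObserveTwo} with $C_2 = c^2 C_3 / C^2$.

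The main obstacle I expect is the time variable. Lemma \ref{t:ObserveThree} is stated in the reparametrized time $\tau$, whereas \eqref{CauchyBackTwo} is written with $t=T$. Since $\mL_2$ already lives in $\tau$, via $\mA^{-1}$, and $\al(0)=\al(T)=0$, the endpoints agree and the integrals are over the same interval. This time change is therefore harmless at this step, and it is where \eqref{AAInverseBound} enters. The more delicate point is that the localized norm $\H^0_y(\omega)$ must be compatible with the multiplication by $a_{13}^{-1}a_9$, and that is exactly why the argument relies on $Q$ being diagonal with multiplicative entries.
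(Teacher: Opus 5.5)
Your proposal is correct and follows the paper's own (very brief) argument: rewrite \eqref{CauchyBackTwo} as $\mL_3(Q^{-1}\mathbf{h})=\mathbf{0}$ with terminal data $Q^{-1}\mathbf{h}_T$, then transfer Lemma \ref{t:ObserveThree} back through $Q$. The paper leaves the $Q$ bounds implicit, and your observation that $Q^{\pm1}=\mathrm{diag}\{1,a_{13}^{\mp1}a_9^{\pm1}\}$ acts on the $L^2$ component only by a bounded multiplier bounded away from zero is what justifies both the $\H^0$ comparison and the localized comparison on $\omega$; no time change occurs between $\mL_2$ and $\mL_3$, so \eqref{AAInverseBound} plays no real role at this step.
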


For the backward linear systems $\mL_1 \mathbf{h} = \mathbf{0}$, with $\mathbf{h}|_{t=T} = \mathbf{h}_{T}$, and
\begin{equation} \label{CauchyBackZero1}
  \mL_0 \mathbf{h} = \mathbf{0}, \quad \mathbf{h}|_{t=T} = \mathbf{h}_{T},
\end{equation}
similar to the argument as in the proof of Lemma \ref{t:CauchyOne} and Lemma \ref{t:CauchyZero1}, we can get the observability result for \eqref{CauchyBackZero1}. 
\begin{lemma} \label{t:ObserveZero1}
Let $T>0$, and $\omega \subset \T$ be an open set.
Let $\textbf{h}_T \in \H^0(\T)$, and $\textbf{h} \in C([0,T]; \H^0(\T))$ be the solution of the backward Cauchy problem \eqref{CauchyBackZero1}.
There exists a constant $C_0 = C_0(T,\omega)$, such that if $\| (\eta, \psi)\|_{C([0,T]; \H^{s_0 +\f52})}< r_0$, for some constant $r_0$ small enough, one has the inequality  
\begin{equation}\label{ObserveZero1}
    \int_0^T \|\mathbf{h}(t,x)\|_{\H^0_x(\omega)}^2 dt \geq C_0\| \mathbf{h}_{T}\|_{\H^0}^2.
\end{equation}
\end{lemma}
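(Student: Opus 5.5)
The plan is to deduce Lemma \ref{t:ObserveZero1} from Lemma \ref{t:ObserveTwo} by undoing the two remaining conjugations, first $\mA$ and then $\mB$, as in the proofs of Lemmas \ref{t:CauchyOne} and \ref{t:CauchyZero1}.

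\emph{Step 1: from $\mL_2$ to $\mL_1$.} Let $\mathbf{h}$ solve $\mL_1\mathbf{h}=\mathbf{0}$ with $\mathbf{h}|_{t=T}=\mathbf{h}_T$. Set $\mathbf{k}:=\mA^{-1}\mathbf{h}$. By \eqref{LTwoMatrix}, $\mathbf{k}$ solves $\mL_2\mathbf{k}=\mathbf{0}$. Since $\al(0)=\al(T)=0$, the reparametrization fixes both endpoints, so $\mathbf{k}|_{\tau=T}=\mathbf{h}_T$. The operator $\mA$ acts only in time and does not touch $y$, so the restriction to $\omega$ commutes with it. Changing variables $t=\tau+\tilde\al(\tau)$ in the time integral gives
\[
\int_0^T\|\mathbf{h}(t)\|^2_{\H^0(\omega)}\,dt=\int_0^T\|\mathbf{k}(\tau)\|^2_{\H^0(\omega)}\,(1+\tilde\al'(\tau))\,d\tau .
\]
The uniform bound on $1+\al'$ stated after \eqref{alpha} gives $1+\tilde\al'\ge (1+\|\eta_x\|_{L^\infty})^{-5/2}\ge \tfrac12$ for small data. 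Lemma \ref{t:ObserveTwo} then yields the $\mL_1$ observability with constant $\tfrac12 C_2$.

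\emph{Step 2: from $\mL_1$ to $\mL_0$.} Let $\mathbf{h}$ solve \eqref{CauchyBackZero1}. Then $\mathbf{k}=\mB^{-1}\mathbf{h}$ solves $\mL_1\mathbf{k}=\mathbf{0}$ with $\mathbf{k}|_{t=T}=\mB^{-1}\mathbf{h}_T$. There are two difficulties, one for each side of the inequality.

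On the observation side, $\mB$ is a spatial diffeomorphism $y=x+\beta(t,x)$ that does not preserve $\omega$. To handle this, I will apply the $\mL_1$ observability on a slightly smaller open set $\omega'\Subset\omega$, for example an interval $\omega_0$ as in Lemma \ref{t:ObserveEight}, which is allowed because $\omega$ is arbitrary. By \eqref{BetaHsEst} and Sobolev embedding, $\|\beta\|_{L^\infty}\lesssim\|\eta\|_{H^{s_0}}<r_0$. Choosing $r_0$ small in terms of $\mathrm{dist}(\omega',\T\setminus\omega)$ makes the image of $\omega'$ under $y\mapsto y+\tilde\beta(t,y)$ lie inside $\omega$ for all $t$.

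The $\H^0(\omega')$ norm is $H^{3/2}\times L^2$. The $L^2$ part transforms with Jacobian $1+\tilde\beta_y\in[\tfrac12,\tfrac32]$. For the $H^{3/2}$ part I will use a local version of \eqref{BBInverseBound} with cutoffs, taking $\omega'\Subset\omega''\Subset\omega$. This gives $\|\mB^{-1}f\|_{H^{3/2}(\omega')}\lesssim \|f\|_{H^{3/2}(\omega'')}+\|\eta\|_{H^{s_0}}\|f\|_{L^2(\omega'')}$, and hence $\int_0^T\|\mathbf{k}\|^2_{\H^0(\omega')}\lesssim\int_0^T\|\mathbf{h}\|^2_{\H^0(\omega)}$.

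On the final-time side, \eqref{BBInverseBound} gives $\|\mathbf{h}_T\|_{\H^0}=\|\mB\mathbf{k}_T\|_{\H^0}\lesssim\|\mathbf{k}_T\|_{\H^0}$ for small $\eta$. Combining the two sides yields \eqref{ObserveZero1}.

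\emph{Main obstacle.} I expect the hardest step to be this localization of the Sobolev norm under the $t$-dependent diffeomorphism. A fractional $H^{3/2}$ norm restricted to $\omega$ is not preserved exactly by composition, so I need the shrinking-domain plus cutoff argument above. The smallness of $\beta$ in $C([0,T];H^{s_0})$ absorbs the commutator terms, and $C_0$ then depends on $\omega$ through the choice of $\omega'$.
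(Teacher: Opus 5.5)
Your proposal is correct and follows the paper's route; the paper's proof just says the result follows by undoing the conjugations $\mA$ and $\mB$, as in Lemmas \ref{t:CauchyOne} and \ref{t:CauchyZero1}. Your version is more careful than the paper's one-line argument: you account for the time Jacobian $1+\tilde\al'$, where the paper only has the sup-in-time invariance \eqref{AAInverseBound}, and you handle the fact that $\mB$ does not preserve $\omega$ by observing on $\omega'\Subset\omega$ and taking $r_0$ small relative to $\mathrm{dist}(\omega',\T\setminus\omega)$.
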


Finally, for the backward linear system 
\begin{equation} \label{CauchyBackZero}
    \mathbf{P}'(\mathbf{u})[\tilde{\mathbf{u}}]= \mathbf{0}, \quad \tilde{\mathbf{u}}|_{t=T} = \tilde{\mathbf{u}}_T =  (\tilde{\eta}_{end}, \tilde{\psi}_{end})^T,
\end{equation}
in view of the conjugation \eqref{ZConjugation}, it can be written as
\begin{equation*}
\mathcal{L}_0\mathcal{Z}^{-1}\tilde{\mathbf{u}}=\mathbf{0},\quad \mathcal{Z}^{-1}\tilde{\mathbf{u}}|_{t=T} = \mathcal{Z}^{-1}\tilde{\mathbf{u}}_{T}.
\end{equation*}
Using the bound \eqref{ZBoundInverse} and Lemma \ref{t:CauchyZero}, we get the following observability result for the linearized hydroelastic waves.
\begin{lemma} \label{t:ObserveZero}
Let $T>0$, and $\omega \subset \T$ be an open set.
Let $\tilde{\mathbf{u}}_T \in \H^0(\T)$, and $\tilde{\mathbf{u}} \in C([0,T]; \H^0(\T))$ be the solution of the backward Cauchy problem \eqref{CauchyBackZero}.
There exists a constant $C= C(T,\omega)$, such that if $\| (\eta, \psi)\|_{C([0,T]; \H^{s_0 +\f52})}< \delta$, for some constant $\delta$ small enough, one has the inequality  
\begin{equation}\label{ObserveZero}
    \int_0^T \|\tilde{\mathbf{u}}(t,x)\|_{\H^0_x(\omega)}^2 dt \geq C\| \tilde{\mathbf{u}}_{T}\|_{\H^0}^2.
\end{equation}
\end{lemma}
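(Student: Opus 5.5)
The plan is to derive Lemma \ref{t:ObserveZero} from Lemma \ref{t:ObserveZero1} through the conjugation \eqref{ZConjugation}, following the same pattern as the earlier backward steps.

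Let $\tilde{\mathbf{u}}$ solve \eqref{CauchyBackZero} and set $\mathbf{h} := \mathcal{Z}^{-1}\tilde{\mathbf{u}}$. Since $\mathbf{P}'(\mathbf{u}) = \mathcal{Z}\mathcal{L}_0\mathcal{Z}^{-1}$, the function $\mathbf{h}$ solves \eqref{CauchyBackZero1} with final datum $\mathbf{h}_T = \mathcal{Z}^{-1}\tilde{\mathbf{u}}_T$. Existence and uniqueness in $C([0,T];\H^0)$ come from the backward version of Lemma \ref{t:CauchyZero}. Taking $\delta \le r_0$, Lemma \ref{t:ObserveZero1} gives
\begin{equation*}
\int_0^T \|\mathbf{h}(t)\|_{\H^0_x(\omega)}^2\,dt \geq C_0 \|\mathcal{Z}^{-1}\tilde{\mathbf{u}}_T\|_{\H^0}^2 .
\end{equation*}

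\textbf{Right-hand side.} By \eqref{ZBoundInverse} at $s=0$ and the smallness of $\psi$ in $H^{s_0+\f52}$, we have
\begin{equation*}
\|\mathcal{Z}^{-1}\tilde{\mathbf{u}}_T - \tilde{\mathbf{u}}_T\|_{\H^0} \lesssim \delta\, \|\tilde{\eta}_{end}\|_{H^{\f32}} .
\end{equation*}
Hence $\|\mathcal{Z}^{-1}\tilde{\mathbf{u}}_T\|_{\H^0}\geq (1-C\delta)\|\tilde{\mathbf{u}}_T\|_{\H^0}$.

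\textbf{Left-hand side.} This is the main point, because the localized norm $\H^0_x(\omega)$ does not commute with $\mathcal{Z}$. The structure of $\mathcal{Z}$ makes this harmless. The first component of $\mathbf{h}$ is exactly $\tilde{\eta}$, and the second is $\tilde{\psi} - B\tilde{\eta}$. Multiplication by $B$ is a local operator, so on $\omega$ we can bound
\begin{equation*}
\|B\tilde{\eta}\|_{L^2(\omega)}\leq \|B\|_{L^\infty}\|\tilde{\eta}\|_{L^2(\omega)} \lesssim \delta\|\tilde{\eta}\|_{H^{\f32}(\omega)} .
\end{equation*}
The $\H^0_x(\omega)$ norm is the restriction norm $H^{\f32}(\omega)\times L^2(\omega)$, so $\|\mathbf{h}(t)\|_{\H^0_x(\omega)}\le (1+C\delta)\|\tilde{\mathbf{u}}(t)\|_{\H^0_x(\omega)}$. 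If the restriction norm is instead defined with a cutoff, the same estimate holds after commuting the cutoff with multiplication by $B$, which is exact. If any nonlocal piece does appear, I would fall back on a perturbative bound. Using the energy estimate \eqref{TildeU0EnergyEst}, the error is controlled by
\begin{equation*}
\sqrt{T}\,\delta\, \|\tilde{\mathbf{u}}\|_{C([0,T];\H^0)} \lesssim \sqrt{T}\,\delta\, e^{CT}\|\tilde{\mathbf{u}}_T\|_{\H^0},
\end{equation*}
and this can be absorbed into the right-hand side by the inequality $(a+b)^2\geq \f12 a^2-b^2$, exactly as in the proof of Lemma \ref{t:ObserveEight}.

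\textbf{Conclusion.} Combining the two bounds gives $\int_0^T\|\tilde{\mathbf{u}}\|^2_{\H^0_x(\omega)}\,dt \geq \frac{C_0(1-C\delta)^2}{(1+C\delta)^2}\|\tilde{\mathbf{u}}_T\|^2_{\H^0}$. Choosing $\delta$ small relative to $C_0(T,\omega)$ and $T$ yields \eqref{ObserveZero} with $C = C_0/2$.
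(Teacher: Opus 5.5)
Your proposal is correct and is essentially the paper's argument: conjugate by $\mathcal{Z}$, apply Lemma \ref{t:ObserveZero1} to $\mathcal{Z}^{-1}\tilde{\mathbf{u}}$, and transfer both sides using \eqref{ZBoundInverse} together with the energy estimate of Lemma \ref{t:CauchyZero}. Your perturbative fallback with $(a+b)^2\geq \f12 a^2-b^2$ is exactly that transfer step.

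Your local bound, which uses the lower-triangular structure of $\mathcal{Z}$, is a valid shortcut when $\H^0_x(\omega)$ is a genuine restriction norm. However, Lemma \ref{t:ObserveThree} in effect measures the first component as $\|\Lambda \tilde{\eta}\|_{L^2(\omega)}$, and that quantity does not control $\|\tilde{\eta}\|_{L^2(\omega)}$ locally. So in the paper's setting it is your fallback that actually closes the proof.
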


\section{Controllability of the linearized system} \label{s:Control}
In this Section, we prove the  $\H^s$ controllability of the linearized system \eqref{PuTildeu}.
Before going to this result, we first prove a $\H^0$ controllability result for \eqref{PuTildeu}.

\begin{lemma}\label{t:ControlH0}
Let $T>0$ and $\omega \subset \T$ be an open set. 
Let $[\mathbf{P}'(\mathbf{u})]^*$ be the adjoint operator of $\mathbf{P}'(\mathbf{u})$.
There exists $\delta_0\in (0,1)$ small enough such that, if $\| (\eta, \psi)\|_{C([0,T]; \H^{s_0 +\f52})}< \delta_0$, then for any real-valued $\mathbf{h}_{in}, \mathbf{h}_{end} \in \H^0(\T),$ and $\mathbf{q}\in C([0,T]; \H^0(\T))$, there exists a unique real-valued function $\mathbf{f}\in C([0,T]; \H^0(\T))$ that solves $[\mathbf{P}'(\mathbf{u})]^*\mathbf{f} = \mathbf{0}$ such that the only solution $\mathbf{h}\in C([0,T]; \H^0(\T))$ of the Cauchy problem
\begin{equation} \label{H0ControlEqn}
\mathbf{P}'(\mathbf{u}) \mathbf{h} =\text{diag}\{0, \chi_\omega \}\mathbf{f}+ \mathbf{q}, \quad 
\mathbf{h}|_{t=0} = \mathbf{h}_{in}
\end{equation}
satisfies $\mathbf{h}|_{t=T} = \mathbf{h}_{end}$.
Furthermore, the function $\mathbf{f}$ satisfies the control estimate
\begin{equation*}
 \|\mathbf{f}\|_{C([0,T]; \H^0)} \lesssim \|\mathbf{h}_{in}\|_{\H^0} + \|\mathbf{h}_{end}\|_{\H^0} + \|\mathbf{q}\|_{C([0,T]; \H^0)}.
\end{equation*}
\end{lemma}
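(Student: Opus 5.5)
The plan is the Hilbert Uniqueness Method in the form used by Alazard--Baldi--Han-Kwan and Baldi--Haus--Montalto. By linearity of \eqref{H0ControlEqn}, we first remove the source term and the initial datum. Let $\mathbf{h}_q$ be the solution of $\mathbf{P}'(\mathbf{u})\mathbf{h}_q=\mathbf{q}$ with $\mathbf{h}_q|_{t=0}=\mathbf{h}_{in}$, given by Lemma \ref{t:CauchyZero} with $s=0$; it satisfies $\|\mathbf{h}_q\|_{C([0,T];\H^0)}\lesssim \|\mathbf{h}_{in}\|_{\H^0}+\|\mathbf{q}\|_{C([0,T];\H^0)}$. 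It then suffices to find $\mathbf{f}$ with $[\mathbf{P}'(\mathbf{u})]^*\mathbf{f}=\mathbf{0}$ such that the solution $\mathbf{h}_f$ of $\mathbf{P}'(\mathbf{u})\mathbf{h}_f=\text{diag}\{0,\chi_\omega\}\mathbf{f}$ with $\mathbf{h}_f|_{t=0}=0$ satisfies $\mathbf{h}_f(T)=\mathbf{h}_{end}-\mathbf{h}_q(T)=:\mathbf{z}$.

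For each final datum $\mathbf{f}_T\in\H^0$, let $\mathbf{f}$ be the solution of the backward adjoint problem $[\mathbf{P}'(\mathbf{u})]^*\mathbf{f}=\mathbf{0}$ with $\mathbf{f}|_{t=T}=\mathbf{f}_T$. This problem is well-posed by the adjoint and backward version of Lemma \ref{t:CauchyZero}, as remarked at the end of Section \ref{s:Well}. Define the HUM map $\Gamma:\mathbf{f}_T\mapsto \mathbf{h}_f(T)$. Pairing $\mathbf{P}'(\mathbf{u})\mathbf{h}_f$ with $\mathbf{f}$ in the space-time product and integrating by parts in $t$ gives the duality identity
\begin{equation*}
\langle \Gamma\mathbf{f}_T,\mathbf{f}_T\rangle_{\H^0}=\int_0^T\langle \text{diag}\{0,\chi_\omega\}\mathbf{f},\mathbf{f}\rangle\,dt .
\end{equation*}
The only boundary term is at $t=T$, because $\mathbf{h}_f(0)=0$. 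The right-hand side is a localized quadratic form of $\mathbf{f}$ on $\omega$. Hence $\Gamma$ is bounded, by the energy estimates, and self-adjoint. If it is also coercive, Lax--Milgram gives a unique $\mathbf{f}_T=\Gamma^{-1}\mathbf{z}$ with $\|\mathbf{f}_T\|_{\H^0}\lesssim\|\mathbf{z}\|_{\H^0}$. The energy estimate for the adjoint flow then gives $\|\mathbf{f}\|_{C([0,T];\H^0)}\lesssim\|\mathbf{f}_T\|_{\H^0}$, which is the claimed control bound. Uniqueness of $\mathbf{f}$ among solutions of the adjoint equation is the injectivity of $\Gamma$. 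Real-valuedness follows because all coefficients are real, so the complex conjugate of $\mathbf{f}$ is also a solution and uniqueness applies.

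The main obstacle is the coercivity $\langle\Gamma\mathbf{f}_T,\mathbf{f}_T\rangle\geq c\|\mathbf{f}_T\|_{\H^0}^2$, that is, observability for the adjoint system. Lemma \ref{t:ObserveZero} is stated for $\mathbf{P}'(\mathbf{u})$ itself, but the paper asserts the whole reduction chain transfers to the adjoints, so I would invoke the adjoint version of \eqref{ObserveZero}. Two points need care.

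First, the control acts only on the second component, while \eqref{ObserveZero} observes both components on $\omega$. I would recover the first component from the second on the observed region using the equations. For the reduced system $\mL_8$ the two components are coupled through $\mathcal{O}$, so the $\psi$-type component at leading order $T$ controls the $\eta$-type component. Alternatively, I would redo the Ingham argument of Lemma \ref{t:ObserveEight} for the second component only, since $\mathcal{O}$ mixes both scalar modes equally.

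Second, the pairing in the duality identity must match the $\H^0$ weights $H^{3/2}\times H^0$. I would fix this by choosing the scalar product so that the second component carries the $L^2$ weight, which makes $\chi_\omega$ on the second component produce the $\|\cdot\|_{H^0(\omega)}$ term.

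Smallness $\delta_0$ is taken as the minimum of the thresholds in Lemma \ref{t:CauchyZero} and Lemma \ref{t:ObserveZero}.
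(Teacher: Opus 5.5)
Your proposal is essentially the paper's proof: HUM via Lax--Milgram, with coercivity taken from the observability inequality \eqref{ObserveZero}. Subtracting $\mathbf{h}_q$ is equivalent to the paper's linear form $\mathbf{\Lambda}$, which builds $\mathbf{h}_{in}$ and $\mathbf{q}$ in through the same duality identity, and your conjugation-symmetry argument for real-valuedness replaces the paper's switch to a real inner product.

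The two points you flag are passed over in the paper, which simply asserts coercivity. First, \eqref{ObserveZero} is stated for $\mathbf{P}'(\mathbf{u})$ rather than its adjoint. Second, it observes both components, while $\mathfrak{B}(\mathbf{f}_1,\mathbf{f}_1)=\int_0^T\!\int\chi_\omega|f_2|^2\,dx\,dt$ sees only the second. Of your two remedies, the Ingham argument for the second component alone, using the two-sided frequency set $\{\pm\mathfrak{t}(n)\}$, is the workable one; Lemma \ref{t:Ingham} only covers $n\in\N$, but the gaps still grow. Recovering $f_1$ on $\omega$ from $f_2$ via the equations does not work directly, because the coupling goes through the nonlocal operator $|D|$ and a time derivative.
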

We remark that the control function that leads to the linearized control problem is not unique.
The function $\mathbf{f}$ that we construct is unique in the sense that it is the unique solution to the first system in \eqref{mLBackwardAdjoint} below.

\begin{proof}
We first prove the existence of the real-valued function $\mathbf{f}\in C([0,T]; \H^0(\T))$.
For any $\mathbf{f}_1, \mathbf{g}_1 \in \H^0(\T)$, we consider real-valued $\mathbf{f}, \mathbf{g} \in C([0,T]; \H^0(\T))$ as the unique solutions of the backward adjoint Cauchy problems
\begin{equation}
\label{mLBackwardAdjoint}
\left\{
\begin{array}{lr}
[\mathbf{P}'(\mathbf{u})]^* \mathbf{f} = \mathbf{0} &  \\
\mathbf{f}|_{t=T} = \mathbf{f}_1,&
\end{array}
\right. \quad
\left\{
\begin{array}{lr}
[\mathbf{P}'(\mathbf{u})]^* \mathbf{g} = \mathbf{0} &  \\
\mathbf{g}|_{t=T} = \mathbf{g}_1,&
\end{array}
\right.
\end{equation}
and we define the bilinear form
\begin{equation*}
    \mathfrak{B}( \mathbf{f}_1,  \mathbf{g}_1) : = \int_0^T \langle \text{diag}\{0, \chi_\omega \}  \mathbf{f},  \mathbf{g}\rangle_{\H^0_x} \,dt,
\end{equation*}
and the linear form
\begin{equation*}
\mathbf{\Lambda}(\mathbf{g}_1) := \langle   \mathbf{h}_{end},  \mathbf{g}_1\rangle_{\H^0_x} - \langle   \mathbf{h}_{in},  \mathbf{g}(0,\cdot)\rangle_{\H^0_x} -\int_0^T \langle   \mathbf{q}(t,\cdot),  \mathbf{g}(t,\cdot)\rangle_{\H^0_x} \,dt,
\end{equation*}
where the scalar product $\langle   \cdot,  \cdot\rangle_{\H^0_x}$ is defined as
\begin{equation*}
\langle  (u_1, u_2),  (v_1, v_2)\rangle_{\H^0_x} : = \int_\T |D|^\f32 u_1(x) |D|^\f32\bar{v}_1(x) + u_2(x)\bar{v}_2(x) \,dx.
\end{equation*}
Using the energy estimate \eqref{mR01EnergyEst} for the backward adjoint problem, we get
\begin{equation*}
|\mathfrak{B}( \mathbf{f}_1,  \mathbf{g}_1)| \lesssim \|\mathbf{f}_1 \|_{\H^0} \|\mathbf{g}_1 \|_{\H^0}, \quad  |\mathbf{\Lambda}(\mathbf{g}_1)| \lesssim \left(\|\mathbf{h}_{in}\|_{\H^0} + \|\mathbf{h}_{end}\|_{\H^0} + \|\mathbf{q}\|_{C([0,T];\H^0)}\right) \| \mathbf{g}_1\|_{\H^0}.
\end{equation*}
In addition, the observability inequality \eqref{ObserveZero} implies that the bilinear form $\mathfrak{B}$ is coercive.
Therefore, according to Lax-Milgram theorem, there exists a unique real-valued $\mathbf{f}_1 \in \H^0(\T)$ such that
\begin{equation} \label{LaxMilgram}
\mathfrak{B}( \mathbf{f}_1,  \mathbf{g}_1) = \mathbf{\Lambda}(\mathbf{g}_1), \quad \forall \mathbf{g}_1 \in \H^0(\T).
\end{equation}
Moreover, $\mathbf{f}_1$ satisfies the estimate
\begin{equation*}
    \|\mathbf{f}_1\|_{\H^0} \lesssim \|\mathbf{h}_{in}\|_{\H^0} + \|\mathbf{h}_{end}\|_{\H^0} + \|\mathbf{q}\|_{C([0,T];\H^0)}.
\end{equation*}
For the function $\mathbf{f}\in C([0,T]; \H^0(\T))$ that is the solution of the first system in \eqref{mLBackwardAdjoint}, we let $\mathbf{h}$ be the solution of the Cauchy problem \eqref{H0ControlEqn}, whose existence is given in Lemma \ref{t:ObserveZero}.
Using the definition,
\begin{align*}
 &0 = \mathfrak{B}( \mathbf{f}_1,  \mathbf{g}_1) - \mathbf{\Lambda}(\mathbf{g}_1) \\
 =& \int_0^T \langle \text{diag}\{0, \chi_\omega \}  \mathbf{f},  \mathbf{g}\rangle_{\H^0} \,dt - \langle   \mathbf{h}_{end},  \mathbf{g}_1\rangle_{\H^0} + \langle   \mathbf{h}_{in},  \mathbf{g}(0,\cdot)\rangle_{\H^0} +\int_0^T \langle   \mathbf{q}(t,\cdot),  \mathbf{g}(t,\cdot)\rangle_{\H^0} \,dt \\
 =& \int_0^T \langle \mathbf{P}'(\mathbf{u}) \mathbf{h},  \mathbf{g}\rangle_{\H^0} \,dt - \langle   \mathbf{h}_{end},  \mathbf{g}_1\rangle_{\H^0} + \langle   \mathbf{h}_{in},  \mathbf{g}(0,\cdot)\rangle_{\H^0}  \\
 =& \int_0^T \langle  \mathbf{h},  [\mathbf{P}'(\mathbf{u})]^*\mathbf{g}\rangle_{\H^0} \,dt + \langle   \mathbf{h}|_{t=T},  \mathbf{g}_1\rangle_{\H^0} - \langle   \mathbf{h}|_{t=0},  \mathbf{g}_1\rangle_{\H^0} - \langle   \mathbf{h}_{end},  \mathbf{g}_1\rangle_{\H^0} + \langle   \mathbf{h}_{in},  \mathbf{g}(0,\cdot)\rangle_{\H^0} \\
 =& \langle \mathbf{h}|_{t=T}-  \mathbf{h}_{end},  \mathbf{g}_1\rangle_{\H^0}. 
\end{align*}
Since this relation holds for any $\mathbf{g}_1\in \H^0$, we conclude that $\mathbf{h}|_{t=T}=  \mathbf{h}_{end}$ \textit{a.e.}
This finishes the proof of the existence of such a control.

Next we prove the uniqueness of the control. 
Suppose there exists another $\tilde{\mathbf{f}} \in C([0,T]; \H^0(\T))$ that solves $[\mathbf{P}'(\mathbf{u})]^*\tilde{\mathbf{f}} = \mathbf{0}$, and the unique solution $\mathbf{h}$ of the Cauchy problem \eqref{H0ControlEqn} satisfies $\mathbf{h}|_{t=T} = \mathbf{h}_{end}$.
Let $\tilde{\mathbf{f}}|_{t=T} = \tilde{\mathbf{f}}_1$.
$\tilde{\mathbf{f}}_1$ satisfies \eqref{LaxMilgram}.
Hence, $\tilde{\mathbf{f}}_1$ is unique because the function $\mathbf{f}_1$ that satisfies \eqref{LaxMilgram} is unique.
We must have $\tilde{\mathbf{f}}_1 = \tilde{\mathbf{f}}$.
$\mathbf{f}$ is unique because it is the unique solution of the first backward adjoint system in \eqref{mLBackwardAdjoint}.

If $\mathbf{h}_{in}, \mathbf{h}_{end} \in \H^0(\T),$ and $\mathbf{q}\in C([0,T]; \H^0(\T))$ are real-valued functions, one  modifies $\langle   \cdot,  \cdot\rangle_{\H^0_x}$ by the real inner product
\begin{equation*}
\langle  (u_1, u_2),  (v_1, v_2)\rangle_{\H^0_x} : = \int_\T |D|^\f32 u_1(x) |D|^\f32v_1(x) + u_2(x)v_2(x) \,dx.
\end{equation*}
Then we can obtain the real-valued solution $\mathbf{f}$.
\end{proof}

We then consider the $\H^s$ controllability of the linearized system \eqref{PuTildeu}.
\begin{lemma}\label{t:ControlHs}
Let $T>0$ and $\omega \subset \T$ be an open set. 
There exists $\delta_s\in (0,1)$ small enough and a positive universal constant $s_1$ such that, if $\| (\eta, \psi)\|_{C([0,T]; \H^{s+\f52})\cap C^1([0,T]; \H^{s})\cap C^2([0,T]; \H^{s-\f52})}<\delta_s$ for $s\geq s_1$, then for any real-valued $\mathbf{h}_{in}, \mathbf{h}_{end} \in \H^s(\T)$, and $\mathbf{q}\in C([0,T]; \H^s(\T))$, the control function $\mathbf{f}$ constructed in Lemma \ref{t:ControlH0} is in $C([0,T]; \H^s(\T))$.
Furthermore, $\mathbf{f}$ and  $\mathbf{h}$ satisfy the estimate
\begin{equation*}
 \|(\mathbf{f},\mathbf{h})\|_{C([0,T]; \H^s)} \lesssim \|\mathbf{h}_{in}\|_{\H^s} + \|\mathbf{h}_{end}\|_{\H^s} + \|\mathbf{q}\|_{C([0,T]; \H^s)}.
\end{equation*}
If $\mathbf{h}_{in}$, $\mathbf{h}_{end}\in\mH^{s+5}$, $\mathbf{q}\in C([0,T];\mH^{s+5})\cap C^1([0,T];\mH^{s+\f52})$, then $\mathbf{h}\,,\mathbf{f}\in C([0,T]; \H^{s+5})\cap C^1([0,T]; \H^{s+\frac{5}{2}})\cap C^2([0,T]; \H^{s}) $, and
\begin{align*}
&\|(\mathbf{f},\mathbf{h})\|_{C([0,T]; \H^{s+5})} + \|\p_t(\mathbf{f},\mathbf{h})\|_{C([0,T]; \H^{s+\f52})} +\|\p_{tt}(\mathbf{f},\mathbf{h})\|_{C([0,T]; \H^{s})}\\
&\lesssim \|\mathbf{h}_{in}\|_{\H^{s+5}} + \|\mathbf{h}_{end}\|_{\H^{s+5}} + \|\mathbf{q}\|_{C([0,T]; \H^{s+5})}+\|\p_t\mathbf{q}\|_{C([0,T]; \H^{s+\f52})}.
\end{align*}
\end{lemma}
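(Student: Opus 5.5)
The plan follows the standard HUM regularity-transfer argument of \cite{BHM}. The key object is the map $\mathbf{f}_1\mapsto \mathbf{f}_1$ determined by \eqref{LaxMilgram}. We write it as an operator equation $\mathcal{W}\mathbf{f}_1=\mathbf{\Lambda}$, where $\mathcal{W}:=\mathcal{S}^*\,\text{diag}\{0,\chi_\omega\}\,\mathcal{S}$. Here $\mathcal{S}:\mathbf{f}_1\mapsto\mathbf{f}$ is the backward adjoint solution map of \eqref{mLBackwardAdjoint}, and $\mathcal{S}^*$ is its $\H^0$-adjoint, i.e.\ the map sending a source to the final value of the forward problem for $\mathbf{P}'(\mathbf{u})$ with zero initial data. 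Coercivity in $\H^0$ is Lemma \ref{t:ObserveZero}. To get $\H^s$ regularity, the plan is to show that $\mathcal{W}$ is also invertible on $\H^s$, with bounds tame in $\|(\eta,\psi)\|$.

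First, Lemma \ref{t:CauchyZero} and its adjoint version give that both $\mathcal{S}$ and $\mathcal{S}^*$ are bounded on $\H^s$. The cutoff $\chi_\omega$ should be replaced by a smooth cutoff so that multiplication is bounded on $\H^s$; we assume this is how $\chi_\omega$ is understood. Hence $\mathcal{W}$ is bounded on $\H^s$, and the right-hand side $\mathbf{\Lambda}$, identified with an element of $\H^s$, is bounded by $\|\mathbf{h}_{in}\|_{\H^s}+\|\mathbf{h}_{end}\|_{\H^s}+\|\mathbf{q}\|_{C\H^s}$.

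To invert on $\H^s$, we use the reductions of Sections \ref{s:Top}--\ref{s:Lower}. When $\mathbf{u}=0$, the reduction is trivial, and $\mathcal{W}_0$ is a constant-coefficient operator conjugated to $\int_0^T e^{itT}\chi_\omega e^{-itT}dt$ on each component. We plan to prove that $\mathcal{W}_0$ commutes with $\Lambda^s:=\langle D\rangle^s$ up to a lower-order operator, by the Egorov-type identity $e^{itT}[\langle D\rangle^s,\chi_\omega]e^{-itT}$, which is of order $s-1$ since the symbol of $T$ is real. This yields a G\r{a}rding-type estimate $\langle \mathcal{W}_0\langle D\rangle^{s}\mathbf{f}_1,\langle D\rangle^s\mathbf{f}_1\rangle\ge c\|\mathbf{f}_1\|_{\H^s}^2-C\|\mathbf{f}_1\|_{\H^{s-1}}^2$. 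Combining this with injectivity (from $\H^0$ observability), and arguing by compactness and induction on $s$, we get $\|\mathcal{W}_0^{-1}\|_{\H^s\to\H^s}\lesssim_s1$. Then $\mathcal{W}-\mathcal{W}_0$ is estimated in operator norm on $\H^s$ by $\|(\eta,\psi)\|_{C\H^{s+5/2}}$ plus a tame term, using the difference of the conjugations and \eqref{mREightEst}, \eqref{PhaseEstimate}, \eqref{BBInverseBound}, \eqref{L2inv}. A Neumann series then inverts $\mathcal{W}$ for $\delta_s$ small. This gives $\mathbf{f}_1\in\H^s$, and applying Lemma \ref{t:CauchyZero} to $\mathbf{f}$ and $\mathbf{h}$ gives the $C([0,T];\H^s)$ estimate.

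For the second part, the estimates at level $s+5$ come from the same argument with $s$ replaced by $s+5$. This requires a bound on $(\eta,\psi)$ in $\H^{s+15/2}$, which we absorb by applying the previous step only with the tame structure and bounding coefficients in lower norms; alternatively, we apply the first part at $s+5$ directly if $s_1$ is large. Time regularity is read off from the equations $\p_t\mathbf{h}=-(\mathbf{P}'(\mathbf{u})-\p_t)\mathbf{h}+\text{diag}\{0,\chi_\omega\}\mathbf{f}+\mathbf{q}$ and its adjoint analogue for $\mathbf{f}$. The spatial part of $\mathbf{P}'(\mathbf{u})$ has order $5/2$ in the $\H$ scale (the entry $\sigma E_4\p_x^4$ maps $H^{s+3/2}\to H^{s-5/2}$), so $\p_t$ costs $5/2$ derivatives. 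Differentiating once more in time uses $\p_t(\eta,\psi)\in C\H^{s}$ and $\p_t\mathbf{q}$.

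The main obstacle is the $\H^s$-invertibility of the Gramian $\mathcal{W}_0$. It is the commutator/G\r{a}rding step at the constant-coefficient level, where smoothness of the cutoff is essential and the low-frequency part must be treated with care. The perturbative step is routine but requires bookkeeping of the loss in $\|(\eta,\psi)\|$.
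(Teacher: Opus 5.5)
There is a genuine gap in your perturbative step. You call it routine, but it is where the whole difficulty of the quasilinear problem sits.

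You claim $\|\mathcal{W}-\mathcal{W}_0\|_{\mathcal{L}(\H^s)}$ is small "using the difference of the conjugations" and \eqref{mREightEst}, \eqref{PhaseEstimate}, \eqref{BBInverseBound}, \eqref{L2inv}. Those are boundedness (tame) estimates for the conjugating maps. They say nothing about smallness of differences, and the differences are in fact not small in $\mathcal{L}(H^s)$:
\begin{itemize}
\item $\mB-I$ sends $e^{ikx}$ to $e^{ikx}(e^{ik\beta}-1)$.
\item The phase factor $e^{i|j|^{1/2}\gamma}$ in $A^{(1)}$ is not close to $1$ uniformly in $j$.
\item The constant $m$ in $T$ depends on $\eta$. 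So the reduced propagators $e^{-i\tau T}$ for $m\neq 1$ and $m=1$ differ by a Fourier multiplier of norm close to $2$ on every $H^s$, however close $m$ is to $1$.
\end{itemize}
Consequently, write $\mathcal{W}-\mathcal{W}_0=(\mathcal{S}-\mathcal{S}_0)^*\text{diag}\{0,\chi_\omega\}\mathcal{S}+\mathcal{S}_0^*\text{diag}\{0,\chi_\omega\}(\mathcal{S}-\mathcal{S}_0)$ and estimate term by term. This gives smallness only as a map $\H^{s+\sigma}\to\H^s$ for some $\sigma>0$, because $\mathbf{P}'(\mathbf{u})-\mathbf{P}'(0)$ contains $\sigma(E_4(\eta)-1)\p_x^4$ and so is of top order. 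A Neumann series that loses derivatives at each step does not converge in any fixed space. The full Gramian might still be operator-norm close to $\mathcal{W}_0$ through a cancellation from the time integral: oscillating phases averaging out against a smooth cutoff, e.g.\ a Schur test on $\hat\chi(j-k)\int_0^T e^{it(\mathfrak{t}(j)-\mathfrak{t}(k))}\,dt$ and its perturbations. That is a substantial statement which your proposal neither formulates nor proves.

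The paper avoids comparing with $\mathbf{u}=0$ altogether. It works in the reduced variables with the full control map $S_1:\widetilde{\mathbf f}_1\mapsto\widetilde{\mathbf h}(0,\cdot)$. This map is already an isomorphism of $L^2\times L^2$ by Lemma \ref{t:ControlH0}, i.e.\ by coercivity of the full Gramian from Lemma \ref{t:ObserveZero}. It then applies $|D|^s$ to the reduced equations. Since $\mL_8$ has constant coefficients, it commutes with $|D|^s$ exactly. Only the commutators with the order-zero pieces $K$ and $|D|^{3/2}\mR_8^*|D|^{-3/2}$ appear, each costing one derivative. This yields $\|\widetilde{\mathbf f}_1\|_{H^s}\lesssim\|\widetilde{\mathbf h}_{in}\|_{H^s}+\|\widetilde{\mathbf f}_1\|_{H^{s-1}}$, and induction on $s$ finishes.

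Your commutator/G\r{a}rding idea is the right mechanism. The fix is to run it on $\mathcal{W}$ itself, after the reduction, rather than on $\mathcal{W}_0$. The $\H^0$ coercivity and injectivity you need are available for $\mathcal{W}$ directly, so no smallness of $\mathcal{W}-\mathcal{W}_0$ is then required.
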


\begin{proof}
Let $\mathbf{f}, \mathbf{h}\in C([0,T]; \H^0)$ be the solution of the control problem constructed in Lemma \ref{t:ControlH0}.
Recall that from the computation in Section \ref{s:Top} and Section \ref{s:Lower}, the operator $\mathbf{P}'(\mathbf{u})$ can be reduced to $\mL_8 + \mR_8$ by
\begin{equation*}
 \mathbf{P}'(\mathbf{u}) =   \mZ\mB\mA PSM\mathcal{T}\mathcal{O}\mU(\mL_8 + \mR_8)\mU^{-1} \mathcal{O}^{-1}\mathcal{T}^{-1}M^{-1}S^{-1}Q^{-1}\mA^{-1}\mB^{-1}\mZ^{-1}.
\end{equation*}
We write
\begin{align*}
&\Phi := \mZ\mB\mA PSM\mathcal{T}\mathcal{O}\mU, \quad \Psi: = (\mZ\mB\mA QSM\mathcal{T}\mathcal{O}\mU)^{-1},\quad \widetilde{\mathbf{h}} := |D|^{\f32}\Psi\mathbf{h}, \\
&\widetilde{\mathbf{h}}_{in}: =|D|^{\f32}\Psi|_{t=0} \mathbf{h}_{in}, \quad \widetilde{\mathbf{h}}_{end}: =|D|^{\f32}\Psi|_{t=T} \mathbf{h}_{end}, \quad \widetilde{\mathbf{q}} := |D|^{\f32}\Phi^{-1}  \mathbf{q}, \\
&\widetilde{\mathbf{f}}: =|D|^{\f32}S^{-2}\Phi^{*}\mathbf{f}, \quad K =|D|^{\f32}  \Phi^{-1} \text{diag}\{0, \chi_\omega \}(\Phi^{*})^{-1}S^2|D|^{-\f32},
\end{align*}
where $\Phi_*$ is the adjoint operator of $\Phi$  with respect to the space-time scalar product $\langle \cdot, \cdot\rangle_{(t,x)} = \int_0^T \langle \cdot, \cdot\rangle_{L^2_x\times L^2_x}dt$. 
We compute
\begin{align*}
\mB^* = \mB^{-1}, \quad \mA^* = \mA^{-1}, \quad P^* = P, \quad S^* = S, 
\quad \mathcal{T}= \mathcal{T}^{-1}, \quad \mathcal{O}^{*} =  \mathcal{O}^{-1}, \quad \mZ^*= \mZ^T,
\end{align*}
\begin{align*}
\mathcal{U}^* =  \begin{pmatrix}
 (A^{(1)})^*   & 0 \\ 
0 &  (A^{(2)})^*
\end{pmatrix},\quad
    M^*= I +  \begin{pmatrix} 
0 &    0\\ 
|D_y|^{-5/2}(\ell_1\cdot) &  \mH |D_y|^{-1}(v_1\cdot)+   |D_y|^{-2}(v_2\cdot) \end{pmatrix},
\end{align*}
\begin{align*}
K=&|D|^{\f32}\mU^{-1}\mathcal{O}^{-1}\mathcal{T}^{-1}M^{-1}S^{-1}P^{-1}\mA^{-1}\mB^{-1}\mZ^{-1}\chi_{\omega}(\mU^{*}\mathcal{O}^{*}\mathcal{T}^{*}M^{*}S^{*}P^{*}\mA^{*}\mB^{*}\mZ^{*})^{-1}S^2|D|^{-\f32}\\
=&|D|^{\f32}\mU^{-1}\mathcal{O}^{-1}\mathcal{T}^{-1}M^{-1}S^{-1}P^{-1}\mA^{-1}\mB^{-1}\mZ^{-1}\chi_{\omega}(\tilde{\mZ}^*)^{-1}\mB\mA P^{-1}S^{-1}(M^*)^{-1}\mathcal{T}\mathcal{O}(\mathcal{U}^*)^{-1}S^2|D|^{-\f32}.
\end{align*}
Then $K$ is an operator of order $0$.
By construction, one can check that $\widetilde{\mathbf{h}}$ and $\widetilde{\mathbf{f}}$ satisfy
\begin{align*}
\begin{cases}
(\mL_8 + |D|^{\f32}\mR_8|D|^{-\f32})\widetilde{\mathbf{h}}=K\widetilde{\mathbf{f}}+\widetilde{\mathbf{q}}\\
\widetilde{\mathbf{h}}(0,\cdot)=\widetilde{\mathbf{h}}_{in}\\
\widetilde{\mathbf{h}}(T,\cdot)=\widetilde{\mathbf{h}}_{end},\\
\end{cases}
\quad
(\mL_8 + |D|^{\f32}\mR^*_8|D|^{\f32})\widetilde{\mathbf{f}}= \mathbf{0}.
\end{align*}

As in \cite{BHM}, we follow an argument used by Dehman-Lebeau  \cite{MR2486082}. Since we can remove the hypothesis that $\mathbf{h}_{end}$ and $\mathbf{q}$ are zero functions by introducing an auxiliary function $\mathbf{w}$, which is the solution of the backward Cauchy problem
\begin{align} \label{Puwq}
\mathbf{P}'(\mathbf{u}) \mathbf{w} = \mathbf{q}, \quad  \mathbf{w}(T,\cdot)=\mathbf{h}_{end},
\end{align}
we therefore assume $\mathbf{h}_{end}=\mathbf{q}= \mathbf{0}$ in the following.

Define the map 
\begin{align*}
S_1: L^2\times L^2 \rightarrow L^2\times L^2, \quad S_1\widetilde{\mathbf{f}}_1=\tilde{\mathbf{h}}(0,\cdot),
\end{align*}
where $\widetilde{\mathbf{f}}$  and  $\tilde{\mathbf{h}}$ are the solutions of the Cauchy problems
\begin{equation}\label{Col-Cauchy}
\begin{cases}
(\mL_8 + |D|^{\f32}\mR^*_8|D|^{-\f32})\widetilde{\mathbf{f}}=\mathbf{0}\\
\widetilde{\mathbf{f}}(T,\cdot)=\widetilde{\mathbf{f}}_1,\\
\end{cases}
\quad
\begin{cases}
(\mL_8 + |D|^{\f32}\mR_8|D|^{-\f32})\widetilde{\mathbf{h}}=K\widetilde{\mathbf{f}}\\
\widetilde{\mathbf{h}}(T,\cdot)= \mathbf{0}.
\end{cases}
\end{equation}
By existence and uniqueness results in Lemma \ref{t:ControlH0}, it follows that $S_1$ is a linear isomorphism. We also consider 
\begin{equation}\label{Col-Cauchy1}
\begin{cases}
(\mL_8 + |D|^{\f32}\mR^*_8|D|^{-\f32})\underline{\widetilde{\mathbf{f}}}= \mathbf{0}\\
\underline{\widetilde{\mathbf{f}}}(T,\cdot)=|D|^s\widetilde{\mathbf{f}}_1,\\
\end{cases}
\quad
\begin{cases}
(\mL_8 +|D|^{\f32}\mR_8|D|^{-\f32})\underline{\widetilde{\mathbf{h}}}=K\underline{\widetilde{\mathbf{f}}}\\
\widetilde{\mathbf{h}}(T,\cdot)= \mathbf{0}.
\end{cases}
\end{equation}
The difference $|D|^s\widetilde{\mathbf{f}}-\underline{\widetilde{\mathbf{f}}}$ satisfies
\begin{align*}
\begin{cases}
(\mL_8 +|D|^{\f32}\mR^*_8|D|^{-\f32})(|D|^s\widetilde{\mathbf{f}}-\underline{\widetilde{\mathbf{f}}})=[|D|^{\f32}\mR^*_8|D|^{-\f32},|D|^s]\widetilde{\mathbf{f}}\\
(|D|^s\widetilde{\mathbf{f}}-\underline{\widetilde{\mathbf{f}}})(T,\cdot)=\mathbf{0}.
\end{cases}
\end{align*}
By Lemma \ref{t:CauchyEight}, and using the fact that $\mR_8^*$ is an operator of order $0$,
\begin{align*}
 \| |D|^s\widetilde{\mathbf{f}}-\underline{\widetilde{\mathbf{f}}}\|_{C([0,T]; L^2\times L^2)}\lesssim_s e^{CT}\|[|D|^{\f32}\mR^*_8|D|^{-\f32},|D|^s]\widetilde{\mathbf{f}}\|_{C([0,T]; L^2\times L^2)} \lesssim_s e^{CT}\|\widetilde{\mathbf{f}} \|_{C([0,T]; H^{s-1}\times H^{s-1})}.  
\end{align*}
The difference $|D|^s\widetilde{\mathbf{h}}-\underline{\widetilde{\mathbf{h}}}$ satisfies
\begin{align*}
\begin{cases}
(\mL_8 + |D|^{\f32}\mR^*_8|D|^{-\f32})(|D|^s\widetilde{\mathbf{h}}-\underline{\widetilde{\mathbf{h}}})=K(|D|^s\widetilde{\mathbf{f}}-\underline{\widetilde{\mathbf{f}}})+[|D|^{\f32}\mR^*_8|D|^{-\f32},|D|^s]\widetilde{\mathbf{h}}+[|D|^s,K]\widetilde{\mathbf{f}}\\
(|D|^s\widetilde{\mathbf{h}}-\underline{\widetilde{\mathbf{h}}})(T,\cdot)= \mathbf{0}.
\end{cases}
\end{align*}
Again using Lemma \ref{t:CauchyEight}, and the fact that the operator $K$ is of order $0$, we have
\begin{align*}
 \| |D|^s\widetilde{\mathbf{h}}-\underline{\widetilde{\mathbf{h}}}\|_{C([0,T]; L^2\times L^2)} 
 \lesssim_s& e^{CT}\Big(\|K(|D|^s\widetilde{\mathbf{f}}-\underline{\widetilde{\mathbf{f}}})\|_{C([0,T];L^2\times L^2)} \\
 +& \|[|D|^{\f32}\mR^*_8|D|^{-\f32},|D|^s]\widetilde{\mathbf{h}}\|_{C([0,T];L^2\times L^2)} +\|[|D|^s,K]\widetilde{\mathbf{f}}\|_{C([0,T];L^2\times L^2)}\Big)\\
 \lesssim_s& e^{CT}\left(\|\widetilde{\mathbf{h}} \|_{H^{s-1}\times H^{s-1}} + \|\widetilde{\mathbf{f}}\|_{C([0,T]; H^{s-1}\times H^{s-1})}\right).  
\end{align*}
Applying Lemma \ref{t:CauchyEight} for \eqref{Col-Cauchy}, we have
\begin{equation}\label{Col-Cauchy-es}
\begin{aligned}
\|\widetilde{\mathbf{f}}\|_{C([0,T]; H^s\times H^s)} \lesssim_s& e^{CT} \|\widetilde{\mathbf{f}}_1\|_{C([0,T]; H^{s}\times H^{s})},\\
\|\widetilde{\mathbf{h}}\|_{C([0,T]; H^s\times H^s)} \lesssim_s&e^{CT} \|\widetilde{\mathbf{f}}\|_{C([0,T]; H^{s}\times H^{s})}\lesssim_s e^{CT} \|\widetilde{\mathbf{f}}_1\|_{C([0,T]; H^{s}\times H^{s})}.
\end{aligned}
\end{equation}
Thus, we have
\begin{align*}
  \| |D|^s\widetilde{\mathbf{f}}-\underline{\widetilde{\mathbf{f}}}\|_{C([0,T]; L^2\times L^2)}, \| |D|^s\widetilde{\mathbf{h}}-\underline{\widetilde{\mathbf{h}}}\|_{C([0,T]; L^2\times L^2)} \lesssim_s e^{CT} \|\widetilde{\mathbf{f}}_1\|_{C([0,T]; H^{s-1}\times H^{s-1})}.
\end{align*}
From the definition of the map $S_1$, we have $\underline{\mathbf{h}}(0,\cdot)=S_1|D|^s\widetilde{\mathbf{f}}_1$ and $S_1\widetilde{\mathbf{f}}_1=\widetilde{\mathbf{h}}_{in}$, then by the above estimate
\begin{align*}
  \|S_1|D|^s\widetilde{\mathbf{f}}_1\|_{L^2\times L^2} \lesssim& \||D|^s\widetilde{\mathbf{h}}(0,\cdot)\|_{L^2\times L^2} +\||D|^s\widetilde{\mathbf{h}}(0,\cdot)-\underline{\mathbf{h}}(0,\cdot)\|_{L^2\times L^2} \\
  \lesssim&
\|\widetilde{\mathbf{h}}_{in}\|_{H^s\times H^s}+\||D|^s\widetilde{\mathbf{h}}-\underline{\mathbf{h}}\|_{C([0,T];L^2\times L^2)}\\
\lesssim_s&\|\widetilde{\mathbf{h}}_{in}\|_{H^s\times H^s}+e^{CT} \|\widetilde{\mathbf{f}}_1\|_{C([0,T]; H^{s-1}\times H^{s-1})}.
\end{align*}
Since $S_1$ is a linear isomorphism, we then have
\begin{align*}
\|\widetilde{\mathbf{f}}_1\|_{C([0,T]; H^{s}\times H^{s})}\lesssim_s \|\widetilde{\mathbf{h}}_{in}\|_{H^s\times H^s}+e^{CT} \|\widetilde{\mathbf{f}}_1\|_{C([0,T]; H^{s-1}\times H^{s-1})}. 
\end{align*}
Note that $\|\widetilde{\mathbf{f}}_1\|_{L^2\times L^2}\lesssim \|\widetilde{\mathbf{h}}_{in}\|_{L^2\times L^2}$, by induction on $s$, we get that
\begin{align*}
\|\widetilde{\mathbf{f}}_1\|_{C([0,T]; H^{s}\times H^{s})}\lesssim_s \|\widetilde{\mathbf{h}}_{in}\|_{H^s\times H^s}.
\end{align*}
By \eqref{Col-Cauchy-es}, we then get
\begin{align*}
\|\widetilde{\mathbf{f}}\|_{C([0,T]; H^s\times H^s)}+\|\widetilde{\mathbf{h}}\|_{C([0,T]; H^s\times H^s)}\lesssim_s \|\widetilde{\mathbf{h}}_{in}\|_{H^s\times H^s}.
\end{align*}
Finally, using the estimates in Section \ref{s:Well}, we obtain that
\begin{align*}
\|\mathbf{f}\|_{C([0,T]; \H^s)}+\|\mathbf{h}\|_{C([0,T]; \H^s)}\lesssim_s \|\mathbf{h}_{in}\|_{\H^s}.
\end{align*}
Adding the auxiliary function $\mathbf{w}$ defined in \eqref{Puwq}, we obtain that
\begin{equation*}
 \|\mathbf{f}\|_{C([0,T]; \H^s)} + \|\mathbf{h}\|_{C([0,T]; \H^s)} \lesssim \|\mathbf{h}_{in}\|_{\H^s} + \|\mathbf{h}_{end}\|_{\H^s} + \|\mathbf{q}\|_{C([0,T]; \H^s)}.
\end{equation*}
The estimates in Lemma \ref{t:ControlHs} are deduced from the fact that $\mathbf{f},\mathbf{h}$ solve the systems $\mathbf{P}'(\mathbf{u}) \mathbf{h} =\text{diag}\{0, \chi_\omega \}\mathbf{f}+ \mathbf{q}$, and $[\mathbf{P}'(\mathbf{u})]^*\mathbf{f}= \mathbf{0}$.
\end{proof}

\section{Controllability of the full hydroelastic waves} \label{s:Full}
In this section, we finish the proof of Theorem \ref{t:MainResult} by combining the results in previous sections and applying the Nash-Moser-H\"ormander Theorem \ref{t:Nash}.

To use the Nash-Moser-H\"ormander theorem, we first define the function spaces $E_s$ and $F_s$.
Let 
\begin{align*}
X_s : = C([0,T]; \H^{s+5})\cap C^1([0,T]; \H^{s+\frac{5}{2}})\cap C^2([0,T]; \H^{s}).
\end{align*}
We define $E_s : = X_s \times X_s$, and
\begin{align*}
F_s : = &\{\mathbf{z}: = (\mathbf{v}, \mathfrak{a}, \mathfrak{b}) = (v_1, v_2, \mathfrak{a}_1, \mathfrak{a}_2, \mathfrak{b}_1, \mathfrak{b}_2): \\
&\mathbf{v}\in C([0,T]; \H^{s+5})\cap C^1([0,T]; \H^{s+\frac{5}{2}}),\quad \mathfrak{a}, \mathfrak{b}\in \H^{s+5} \}.
\end{align*}
By employing the tame estimates from Lemma 7.1 and Lemma 7.2 of \cite{BHM}, we can replace the requirement that
$\| (\eta, \psi)\|_{C([0,T]; \H^{s+\f52})\cap C^1([0,T]; \H^{s})\cap C^2([0,T]; \H^{s-\f52})}$ be small in Lemma \ref{t:ControlHs} with the condition that $\|\mathbf{u}\|_{X_\mathfrak{c}}$ be small for some universal constant $\mathfrak{c}$. Consequently, we obtain the following result.
\begin{proposition}\label{t:Rightinv}
Let $T>0$ and $\omega \subset \T$ be an open set. 
There exists $\delta_*\in (0,1)$ small enough and a positive universal constant $\mathfrak{c}$ such that, if $\|\mathbf{u}\|_{X_\mathfrak{c}}< \delta_*$, then there exists $(\mathbf{h},\boldsymbol{\varphi})\in E_s$ such that
\begin{equation*}
\mathbf{P}'(\mathbf{u}) \mathbf{h} -\text{diag}\{0,\chi_\omega\}\boldsymbol{\varphi}= \mathbf{v}, \quad 
\mathbf{h}|_{t=0} =\mathfrak{a},\quad \mathbf{h}|_{t=T} =\mathfrak{b},
\end{equation*}
and
\begin{align*}
\|(\mathbf{h},\boldsymbol{\varphi})\|_{E_s} \lesssim_s \|(\mathbf{v},\mathfrak{a},\mathfrak{b})\|_{F_s}+\|\mathbf{u}\|_{X_{s+\mathfrak{c}}}\|(\mathbf{v},\mathfrak{a},\mathfrak{b})\|_{F_0}).
\end{align*}    
\end{proposition}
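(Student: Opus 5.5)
The plan is to reduce the proposition to Lemma \ref{t:ControlHs}, with $\mathbf{q}=\mathbf{v}$, $\mathbf{h}_{in}=\mathfrak{a}$, $\mathbf{h}_{end}=\mathfrak{b}$, and then set $\boldsymbol{\varphi}:=\mathbf{f}$. That lemma gives $\mathbf{P}'(\mathbf{u})\mathbf{h}=\text{diag}\{0,\chi_\omega\}\mathbf{f}+\mathbf{v}$ with the correct boundary values, so $(\mathbf{h},\boldsymbol{\varphi})$ solves the required system. Its second (higher regularity) part applies because $(\mathbf{v},\mathfrak{a},\mathfrak{b})\in F_s$ means exactly $\mathfrak{a},\mathfrak{b}\in\H^{s+5}$ and $\mathbf{v}\in C([0,T];\H^{s+5})\cap C^1([0,T];\H^{s+\f52})$. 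It then places $(\mathbf{h},\mathbf{f})\in C([0,T];\H^{s+5})\cap C^1([0,T];\H^{s+\f52})\cap C^2([0,T];\H^s)=X_s$, i.e. $(\mathbf{h},\boldsymbol{\varphi})\in E_s$. However, that lemma gives a non-tame bound whose constant depends on $s$ and on the smallness of $\mathbf{u}$ in the high norm $C([0,T];\H^{s+\f52})\cap\cdots$. So the real content of the proposition is to make every constant in the reduction tame.

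Step one is to revisit each conjugation $\mZ,\mB,\mA,P,Q,S,M,\mathcal{T},\mathcal{O},\mU$ and each remainder $\mR_4,\dots,\mR_8$ and record tame bounds. These have the form
\[
\|\Phi^{\pm1}g\|_{s}\lesssim_s \|g\|_s+\|\mathbf{u}\|_{X_{s+\mathfrak{c}}}\|g\|_{0},\qquad \|\mR_8 g\|_s\lesssim_s \|\mathbf{u}\|_{X_{\mathfrak{c}}}\|g\|_s+\|\mathbf{u}\|_{X_{s+\mathfrak{c}}}\|g\|_0 .
\]
Most of these are already available in this form: \eqref{BBInverseBound}, \eqref{BBInvBoundTwo}, \eqref{L2inv}, \eqref{ZBoundInverse}, \eqref{REightEst}, and the Moser estimates. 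The coefficients $a_i,\beta,\alpha,\gamma,p$ involve $\p_t$ of $\eta,\psi$ (through $B_t$, $\beta_t$, $\gamma_\tau$). This is why the $C^1$ and $C^2$ components of $X$ enter, and it accounts for the loss $\mathfrak{c}$. I would invoke Lemma 7.1 and Lemma 7.2 of \cite{BHM} for compositions with diffeomorphisms, and Lemma \ref{t:Einv} for the Fourier integral operators.

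Step two is to redo the energy estimate of Lemma \ref{t:CauchyEight} tamely. I would commute $|D|^s$ through $\mL_8+\mR_8$ and apply Gronwall, keeping $\|\mathbf{u}\|_{X_{\mathfrak{c}}}$ in the exponent (small, fixed by $\delta_*$), while $\|\mathbf{u}\|_{X_{s+\mathfrak{c}}}$ appears only linearly multiplied by low norms. The $\H^0$ observability and control of Lemma \ref{t:ControlH0} need only $\|\mathbf{u}\|_{X_{\mathfrak{c}}}<\delta_*$, and then they hold with constants independent of $s$.

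Step three is the Dehman--Lebeau induction of Lemma \ref{t:ControlHs}, run with these tame commutator bounds. Here $[|D|^s,K]$ and $[|D|^s,|D|^{\f32}\mR_8^*|D|^{-\f32}]$ are bounded by $\|\mathbf{u}\|_{X_{\mathfrak{c}}}\|\cdot\|_{s-1}+\|\mathbf{u}\|_{X_{s+\mathfrak{c}}}\|\cdot\|_0$. The induction in $s$ then yields the tame inequality for $(\widetilde{\mathbf{h}},\widetilde{\mathbf{f}})$, and pulling back via step one gives it for $(\mathbf{h},\mathbf{f})$ in $C([0,T];\H^{s+5})$. The time derivatives come from the equations $\mathbf{P}'(\mathbf{u})\mathbf{h}=\text{diag}\{0,\chi_\omega\}\mathbf{f}+\mathbf{v}$ and $[\mathbf{P}'(\mathbf{u})]^*\mathbf{f}=0$: since $\mathbf{P}'(\mathbf{u})$ has order $\f52$, solving for $\p_t$ loses $\f52$ derivatives each time, and the tame product estimates give the $C^1$ and $C^2$ parts of $E_s$.

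The main obstacle is step three: keeping the induction tame. The interpolation needed is between $\|\mathbf{u}\|_{X_{s+\mathfrak{c}}}\|g\|_0$ and $\|\mathbf{u}\|_{X_{\mathfrak{c}}}\|g\|_s$. Done naively, the $e^{CT}$ constants and the inverse of $S_1$ would compound at each step, so the proof must carry the precise $\|\mathbf{u}\|$ dependence rather than absorb it into $\lesssim_s$.
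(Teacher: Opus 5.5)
Your proposal is correct and follows the paper's route. The paper obtains Proposition \ref{t:Rightinv} in one sentence: it applies Lemma \ref{t:ControlHs} (with $\mathbf{q}=\mathbf{v}$, $\mathbf{h}_{in}=\mathfrak{a}$, $\mathbf{h}_{end}=\mathfrak{b}$, $\boldsymbol{\varphi}=\mathbf{f}$) and invokes the tame estimates of Lemmas 7.1--7.2 of \cite{BHM} to trade smallness of $\mathbf{u}$ in the high norm for smallness in $X_{\mathfrak{c}}$; your steps one to three just spell out what that appeal involves. The compounding you worry about in step three is harmless: for fixed $s$ the induction has finitely many steps, the constants depend only on $s$ and $\|\mathbf{u}\|_{X_{\mathfrak{c}}}$, and $\|\mathbf{u}\|_{X_{s+\mathfrak{c}}}$ enters only linearly against low norms of the data.
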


We define the smoothing operators $S_j$, $j=0,1,2,...$ as
\begin{align*}
S_ju(x):=\sum_{|k|\leq 2^j}\hat{u}_ke^{ikx},\quad \text{where} \quad u(x)=\sum_{k\in\mathbb{Z}}\hat{u}_ke^{ikx}.
\end{align*}
One easily
verifies that $S_j$ satisfies the assumptions of Theorem \ref{t:Nash} on $E_s$ and $F_s$.

From \eqref{e:hydro}, $\Phi(\mathbf{u},f)$  belongs to $F_s$ when $(\mathbf{u},0,f)\in E_{s+\f52}$. 
From \eqref{t:Lannes} and \eqref{SecondElastic}, we have if $\|\tilde{\mathbf{u}}_2\|_{X_2}\leq\delta_0$, with $\delta_0$ small enough, then
\begin{align*}
\|P''(\mathbf{u})[\tilde{\mathbf{u}}_1, \tilde{\mathbf{u}}_2]\|_{F_s}\lesssim_s\|\tilde{\mathbf{u}}_1\|_{X_{s+4}}\|\tilde{\mathbf{u}}_2\|_{X_{2}}+\|\tilde{\mathbf{u}}_2\|_{X_{s+4}}\|\tilde{\mathbf{u}}_1\|_{X_{2}}+\|\mathbf{u}\|_{X_{s+4}}\|\tilde{\mathbf{u}}_1\|_{X_2}\|\tilde{\mathbf{u}}_2\|_{X_2}.
\end{align*}
We fix $V = \{(\mathbf{u},0, f)\in E_4: \|(\mathbf{u}, 0,f)\|_{E_4}\leq\delta_0\} $ to be a small convex neighborhood of $E_4$, and choose $\delta_1=\delta_*$,
$a_0=2$, $\mu=4$, $a_1=\mathfrak{c}$, $\al=\beta>2\mathfrak{c}$ in Theorem \ref{t:Nash}.  
The right inverse in Proposition \ref{t:Rightinv} satisfies the assumptions of Theorem \ref{t:Nash}. Let $\mathbf{u}_{in}, \mathbf{u}_{end}\in \mH^{\beta+5}$, with $\|\mathbf{u}_{in}\|_{\mH^{\beta+5}}+ \| \mathbf{u}_{end}\|_{\mH^{\beta+5}}$ small enough. 
Let $\mathbf{g} := (\textbf{0}, \mathbf{u}_{in}, \mathbf{u}_{end})$, so that $\mathbf{g}\in F_{\beta}$ and $\|\mathbf{g}\|_{F_{\beta}}\leq\delta$. Since $\mathbf{g}$ does not depend on time, it satisfies \eqref{NineTwelve}.

By Theorem \ref{t:Nash}, there exists a solution $(\mathbf{u},0,f)$ of the system 
\begin{equation*}
\Phi(\mathbf{u},f) = \mathbf{g}, \quad \text{with } \|(\mathbf{u},0,f)\|_{E_{\al}}\leq \|\mathbf{g}\|_{F_{\al}}.
\end{equation*}
The function $\chi_\omega f$ is just the exterior pressure $P_{ext}(t,x)$ that we need in Theorem \ref{t:MainResult}.
It suffices to prove the uniqueness of the solution $\bu$.
Let $\bu_1$, $\bu_2$ be two solutions of \eqref{e:hydro}. We compute
\begin{align*}
\mathbf{P}(\bu_1)-\mathbf{P}(\bu_2)=\int_0^1\mathbf{P}'(\bu_2+\la(\bu_1-\bu_2))d\la[\bu_1-\bu_2]=:\tilde{\mathbf{P}}[\bu_1-\bu_2].
\end{align*}
The difference $\bu_1-\bu_2$ satisfies $\tilde{\mathbf{P}}[\bu_1-\bu_2]= \mathbf{0}$, $(\bu_1-\bu_2)(0,x)= \mathbf{0}$. 
It follows from the uniqueness of the Cauchy problem in Lemma \ref{t:CauchyZero} that $\bu_1=\bu_2$.

\appendix
\section{Function spaces and  estimates} \label{s:Appendix}
\subsection{Definition of functions and some estimates}
Here, we recall the definition of function spaces and some of the estimates that are used in previous sections.

For a periodic function $f(x)\in L^2(\mathbb{T}; \mathbb{C})$, it can be expanded in Fourier series as
\begin{equation*}
f(x) = \frac{1}{\sqrt{2\pi}}\sum_{j\in \mathbb{Z}} \hat{f}(j)e^{ijx}, \quad \hat{f}(j): = \frac{1}{\sqrt{2\pi}}\int_{0}^{2\pi}f(x)e^{-ijx}dx.
\end{equation*}
Most of our analysis in this paper happens at the level of  homogeneous Sobolev spaces $\dot{H}^s(\mathbb{T})$. 
Their norms are given by
\begin{equation*}
    \|f\|_{\dot{H}^s(\mathbb{T})} = \left( \sum_{n\in \mathbb{Z}} |n|^{2s} |\hat{f}(n)|^2\right)^{\frac{1}{2}}. 
\end{equation*}
And we define $\dot{H}^0(\mathbb{T}) =L^2(\mathbb{T})$.
For functions without zero frequency mode on the torus, the homogeneous and the usual inhomogeneous Sobolev spaces are equivalent:
\begin{equation*}
 \|f\|_{\dot{H}^s(\mathbb{T})} \leq \|f\|_{H^s(\mathbb{T})}, \quad \|f\|_{H^s(\mathbb{T})} \leq 2^\frac{s}{2}\|f\|_{\dot{H}^s(\mathbb{T})}, \quad s\geq 0.
\end{equation*}
Let $q: \mathbb{R} \rightarrow \mathbb{T}$ be the quotient map.
We define the Zygmund space of $f$ on the torus as its extension on the real line $\mathbb{R}$.
More precisely, we set
\begin{equation*}
    \|f\|_{C^s_*(\mathbb{T})} = \|q^{*}f\|_{C^s_*(\mathbb{R})},
\end{equation*}
where $q^{*}f(x) = f(q(x))$ is the pullback of $f$ over $q$.

For the estimates in $\dot{H}^s$, we have the product and Moser type estimates.
\begin{lemma}
    Let $s>0$, then $\dot{H}^s \cap L^\infty$ is an algebra, with the estimate
\begin{equation}
\|uv\|_{\dot{H}^s} \lesssim \|u\|_{\dot{H}^s} \|v\|_{L^\infty} + \|u\|_{L^\infty}\|v\|_{\dot{H}^s}.
\end{equation}
When $s>\frac{1}{2}$, by Sobolev embedding,
\begin{equation}
  \|uv\|_{\dot{H}^s} \lesssim \|u\|_{\dot{H}^s} \|v\|_{\dot{H}^s}.  
\end{equation}
 Furthermore, for any smooth function $F$ that vanishes at $0$, then the following Moser type  estimates hold:
 \begin{equation}
\|F(u)\|_{\dot{H}^s}\lesssim_{\|u\|_{L^\infty}} \|u\|_{\dot{H}^s}. \label{MoserTwo}
    \end{equation}
\end{lemma}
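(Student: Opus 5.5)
We prove the three assertions of the lemma in turn: the algebra estimate, its $s>\frac12$ corollary, and the Moser estimate \eqref{MoserTwo}. Throughout we use the Littlewood--Paley decomposition on $\T$: write $u=\sum_{k\ge 0}\Delta_k u$, where $\Delta_k$ projects onto frequencies $|n|\sim 2^k$ (and $\Delta_0$ also keeps $n=0$). On the torus, $\|u\|_{\dot H^s}^2\approx\sum_{k\ge1}2^{2ks}\|\Delta_k u\|_{L^2}^2$, since the zero mode carries no weight in the homogeneous norm.

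For the product estimate we use Bony's decomposition $uv=T_uv+T_vu+R(u,v)$.

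\begin{itemize}
\item For the paraproduct $T_uv=\sum_k S_{k-3}u\,\Delta_k v$, each block has spectrum in an annulus $\sim 2^k$, and $\|S_{k-3}u\|_{L^\infty}\lesssim\|u\|_{L^\infty}$ uniformly in $k$. Almost orthogonality then gives $\|T_uv\|_{\dot H^s}\lesssim\|u\|_{L^\infty}\|v\|_{\dot H^s}$. The term $T_vu$ is symmetric.
\item For the remainder $R(u,v)=\sum_{|k-k'|\le 2}\Delta_ku\,\Delta_{k'}v$, the blocks have spectrum only in balls of radius $\lesssim 2^k$. This is where $s>0$ is needed: summing $2^{js}\sum_{k\ge j-3}\|\Delta_ku\|_{L^\infty}\|\Delta_{k'}v\|_{L^2}$ in $\ell^2_j$ converges, by Young's inequality with the kernel $2^{-(k-j)s}$ for $k\ge j$.
\item The zero-frequency parts of $u$ and $v$ only enter as $L^\infty$ factors, so they cause no trouble.
\end{itemize}

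For $s>\frac12$, we bound $\|u\|_{L^\infty}\le|\hat u(0)|+C\|u\|_{\dot H^s}$ by Cauchy--Schwarz, using $\sum_{n\neq0}|n|^{-2s}<\infty$. This immediately gives the inequality for mean-zero functions, which is the setting in which it is used here, since $\eta$ has zero mean. For functions with a nonzero mean, the constant mode has to be carried in the inhomogeneous norm, since $\dot H^s$ alone does not control it.

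For \eqref{MoserTwo}, we use Meyer's telescoping argument. Write
\[
F(u)=F(S_0u)+\sum_{k\ge0}\big(F(S_{k+1}u)-F(S_ku)\big)=F(S_0u)+\sum_k m_k\,\Delta_{k+1}u,
\qquad m_k=\int_0^1F'\big(S_ku+t\Delta_{k+1}u\big)\,dt .
\]
By the chain rule and Bernstein's inequality, $\|\partial^\alpha m_k\|_{L^\infty}\le C_\alpha\big(\|u\|_{L^\infty}\big)\,2^{k\alpha}$. Hence each term $m_k\Delta_{k+1}u$ is concentrated at frequencies $\lesssim 2^k$, with tails decaying like $2^{-(j-k)N}$ at frequency $2^j$. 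Summing with weight $2^{js}$ then yields $\|F(u)-F(S_0u)\|_{\dot H^s}\lesssim_{\|u\|_{L^\infty}}\|u\|_{\dot H^s}$ for $s>0$, the case of non-integer $N>s$ being handled the same way. The term $F(S_0u)$ is trigonometrically low-degree. Its homogeneous norm is controlled because $F(0)=0$ and $S_0u$ is bounded by $\|u\|_{L^\infty}$ with derivatives of $\Delta_0u$ bounded by its size; any remaining constant mode is invisible in $\dot H^s$.

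The main obstacle is the frequency-localization bookkeeping in Meyer's argument on $\T$, specifically the weight $2^{js}$ summed against the decay of the high-frequency tails of $m_k\Delta_{k+1}u$. We would handle it as on $\R$, by pulling back via the quotient map $q$, exactly as the paper does for Zygmund norms.
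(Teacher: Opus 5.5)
The paper gives no proof of this lemma. It is recalled in the appendix as a standard fact, and the only argument offered is the phrase ``by Sobolev embedding'' for the second inequality. Your argument is the standard proof the paper implicitly relies on, and its core is sound:
\begin{enumerate}
\item Bony's decomposition for the product estimate;
\item the bound $\|u\|_{L^\infty}\le|\hat u(0)|+C\|u\|_{\dot H^s}$ for $s>\frac12$;
\item Meyer's telescoping argument for the Moser estimate.
\end{enumerate}
Your mean-zero caveat is necessary, not just convenient. On $\mathbb{T}$ the second inequality is false as stated: $u\equiv1$, $v=\sin x$ gives $\|uv\|_{\dot H^s}=\|\sin x\|_{\dot H^s}>0$ while the right-hand side vanishes. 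By the symmetric example, \emph{both} factors must have zero mean. The paper's appeal to Sobolev embedding overlooks that $\dot H^s(\mathbb{T})$ does not control the constant mode.

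Three points need fixing.

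\emph{The pullback via $q$.} Drop it. The pullback works for the $L^\infty$-based Zygmund norms, but a nonzero periodic function pulled back to $\mathbb{R}$ is not in $L^2(\mathbb{R})$, so its $\dot H^s(\mathbb{R})$ norm is infinite. Nothing of the sort is needed, so the ``main obstacle'' you identify is not really one. Run Littlewood--Paley directly on $\mathbb{T}$ with smooth multipliers $\varphi(2^{-k}n)$. Their kernels are periodizations of Schwartz functions, with $L^1(\mathbb{T})$ norms bounded uniformly in $k$. This gives the uniform $L^\infty$ bounds on $S_k,\Delta_k$ and the Bernstein inequalities you use. Sharp truncations such as the appendix's $S_j$ would not work, since Dirichlet kernels have $L^1$ norms growing like $j$.

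\emph{The term $F(S_0u)$.} It is not a trigonometric polynomial. It is nevertheless harmless, for the following reason. Write $S_0u=c+w_0$, where $c$ is the mean and $w_0$ is supported on $1\le|n|\lesssim1$, and fix an integer $N\ge s$. Then $\|F(S_0u)\|_{\dot H^s}\le\|\partial^N F(S_0u)\|_{L^2}$. By Fa\`a di Bruno, every term of $\partial^N F(S_0u)$ contains a factor $\partial^\beta w_0$ with $\beta\ge1$, and this factor is bounded by $\|w_0\|_{L^2}\lesssim\|u\|_{\dot H^s}$. The remaining factors are bounded in terms of $\|u\|_{L^\infty}$. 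Note that $F(0)=0$ plays no role for the homogeneous norm.

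\emph{Indexing.} With your convention that $\Delta_0$ contains both $n=0$ and $|n|\sim1$, the equivalence $\|u\|_{\dot H^s}^2\approx\sum_{k\ge1}2^{2ks}\|\Delta_ku\|_{L^2}^2$ misses the modes $|n|=1$. You should add the term $\|(I-\pi_0)\Delta_0u\|_{L^2}^2$.
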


We recall the result from Appendix $B$ of \cite{BHM}  for the Sobolev estimates of composition with a spatial diffeomorphism.
\begin{lemma}[\hspace{1sp}\cite{BHM}] \label{t:BaldiLemma}
There exists $\delta \in (0, 1)$ with the following properties.

(1) \textit{Let $s \geq 0$ and $\alpha \in C([0, T], H^{s+2}(\mathbb{T}))$, with $\|\alpha\|_{C([0,T];H^2)} \leq \delta$. Then the operator $\mathcal{B}u(t, x) := u(t, x + \alpha(t, x))$ is a linear and continuous operator $C([0, T], H^s(\mathbb{T})) \to C([0, T], H^s(\mathbb{T}))$, with}
\begin{equation} \label{BtransformBound}
    \|\mathcal{B}u\|_{C([0,T];H^s)} \lesssim_s \|u\|_{C([0,T];H^s)} + \|\alpha\|_{C([0,T];H^{s+2})}\|u\|_{C([0,T];L^2)}. 
\end{equation}

(2) \textit{Let $s \geq 0$ and $\alpha \in C([0, T], H^{s+4}(\mathbb{T}))$, with $\|\alpha\|_{C([0,T];H^4)} \leq \delta$. Then the inverse operator $\mathcal{B}^{-1}$, defined by $\mathcal{B}^{-1}u(t, y) := u(t, y + \widetilde{\alpha}(t, y))$, maps $C([0, T], H^s(\mathbb{T}))$ into itself, with}
\[
    \|\mathcal{B}^{-1}u\|_{C([0,T];H^s)} \lesssim_s \|u\|_{C([0,T];H^s)} + \|\alpha\|_{C([0,T];H^{s+4})}\|u\|_{C([0,T];L^2)}, \qquad \forall u \in C([0, T], H^s(\mathbb{T})) .
\]
\end{lemma}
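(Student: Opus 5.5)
This is a composition estimate for the diffeomorphisms $\mathcal{B}$ and $\mathcal{B}^{-1}$; I follow the argument of Appendix B of \cite{BHM} and work at each fixed $t$, since all bounds are uniform in $t$.

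\textbf{Part (1).} The $C([0,T];\cdot)$ statement then follows by taking the supremum over $t$. Continuity in $t$ follows from continuity of $\alpha$ and $u$ in $t$, together with a density argument using smooth $u$.

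For $s=0$, I change variables $y=x+\alpha(x)$. The Jacobian $1+\alpha_x$ is bounded below by $1/2$, since $\|\alpha_x\|_{L^\infty}\lesssim\|\alpha\|_{H^2}\le\delta$. This gives $\|\mathcal{B}u\|_{L^2}\lesssim\|u\|_{L^2}$.

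For an integer $s=k\ge1$, I use the Fa\`a di Bruno formula:
\[
\partial_x^k(\mathcal{B}u)=\sum \big(\mathcal{B}\,\partial^{j}u\big)\prod_{i}\partial_x^{m_i}(x+\alpha),\qquad \sum_i m_i=k,\ \#\{i\}=j.
\]
The top term is $(\mathcal{B}\partial^k u)(1+\alpha_x)^k$, which is bounded by $\|u\|_{H^k}$ using the $s=0$ case and $\|\alpha_x\|_{L^\infty}\lesssim\delta$.

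In the other terms, the factors $\partial^{m_i}\alpha$ with $m_i\ge2$ are estimated by Gagliardo--Nirenberg interpolation between $\|\alpha\|_{W^{1,\infty}}\lesssim\|\alpha\|_{H^2}$ and $\|\alpha\|_{H^{k+2}}$. The factor $\mathcal{B}\partial^j u$ is estimated by interpolating between $\|u\|_{L^2}$ and $\|u\|_{H^k}$ (through $L^\infty$-type bounds with one extra derivative loss on $\alpha$). The exponents are arranged so that a Young inequality yields the tame form $\|u\|_{H^k}+\|\alpha\|_{H^{k+2}}\|u\|_{L^2}$. The extra two derivatives on $\alpha$ give the room needed to place the factors in $L^\infty$.

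Non-integer $s$ follows by interpolation between consecutive integers. The map is linear in $u$ for fixed $\alpha$, so tame bounds at $k$ and $k+1$ interpolate.

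\textbf{Part (2).} I first construct and estimate $\widetilde\alpha$. It solves $\widetilde\alpha(y)+\alpha(y+\widetilde\alpha(y))=0$ and is obtained as the fixed point of $\widetilde\alpha=-\mathcal{B}_{\widetilde\alpha}\alpha$ in a small $H^2$ ball, which is a contraction for $\delta$ small.

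Applying part (1) with $\widetilde\alpha$ in place of $\alpha$ would give $\|\widetilde\alpha\|_{H^{s}}\lesssim\|\alpha\|_{H^{s}}+\|\widetilde\alpha\|_{H^{s+2}}\|\alpha\|_{L^2}$, which loses derivatives. Instead I differentiate the identity: $\widetilde\alpha_y=-(\alpha_x\circ(\mathrm{id}+\widetilde\alpha))(1+\widetilde\alpha_y)$. Then I prove by induction on integer $s$ the tame bound
\[
\|\widetilde\alpha\|_{H^{s}}\lesssim_s\|\alpha\|_{H^{s+2}}.
\]
The induction uses the Fa\`a di Bruno expansion and the smallness $\|\alpha\|_{H^4}\le\delta$, which absorbs the top-order term into the left-hand side.

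Finally I apply part (1) to $\mathcal{B}^{-1}$ with $\widetilde\alpha$. The hypothesis $\|\widetilde\alpha\|_{H^2}\lesssim\|\alpha\|_{H^4}\le\delta$ is satisfied, and
\[
\|\mathcal{B}^{-1}u\|_{H^s}\lesssim\|u\|_{H^s}+\|\widetilde\alpha\|_{H^{s+2}}\|u\|_{L^2}\lesssim\|u\|_{H^s}+\|\alpha\|_{H^{s+4}}\|u\|_{L^2}.
\]

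\textbf{Main obstacle.} The delicate step is the tame bound for $\widetilde\alpha$. One must track the loss carefully so that the constants stay linear in the high norm and are not polluted by powers of $\|\alpha\|_{H^{s+2}}$. I would handle this by bounding every lower-order product with interpolation and then using smallness in $H^4$. The fallback is to accept the two-derivative loss stated in the lemma, which is exactly what the index $s+4$ permits.
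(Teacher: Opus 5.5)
The paper gives no proof of this lemma; it quotes it from Appendix B of \cite{BHM}. Your integer-order argument for (1) and your construction of $\widetilde\alpha$ follow the standard route, and the integer case does close. With $j$ derivatives on $u$ and $r\ge1$ factors $\partial^{m_i}\alpha$, $m_i\ge2$, placed in $L^\infty$ through $H^{m_i+\frac12+\epsilon}$, the powers of $\|\alpha\|_{H^{k+2}}$ and $\|u\|_{H^k}$ add up to $1-(\tfrac12-\epsilon)r/k<1$, so Young gives the tame form. The genuine gap is the passage to non-integer $s$. Your justification, that $u\mapsto\mathcal{B}u$ is linear for fixed $\alpha$ so the tame bounds at $k$ and $k+1$ interpolate, does not deliver the stated estimate. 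Any interpolation with $\alpha$ frozen must use the $H^{k+1}$ endpoint, whose bound involves $\|\alpha\|_{H^{k+3}}$, more than the $H^{s+2}$ the lemma allows. Moreover, naive operator interpolation only gives
\[
\|\mathcal{B}u\|_{H^{k+\theta}}\lesssim\bigl(1+\|\alpha\|_{H^{k+2}}\bigr)^{1-\theta}\bigl(1+\|\alpha\|_{H^{k+3}}\bigr)^{\theta}\|u\|_{H^{k+\theta}},
\]
which is not tame: a high norm of $\alpha$ now multiplies the high norm of $u$, and the tame structure is exactly what the Nash--Moser step uses. You cannot interpolate in $\alpha$ at the same time, because $\mathcal{B}$ depends nonlinearly on it. For $0\le s\le2$ there is no problem, since at $k=0,1,2$ the bounds hold with constants depending only on $\|\alpha\|_{H^2}$. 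The failure is for non-integer $s>2$.

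There you need a genuinely fractional argument. One option is to apply Fa\`a di Bruno to $\partial_x^k(\mathcal{B}u)$ and estimate the order-$\theta$ Gagliardo seminorm of each product, using a fractional Leibniz rule and the fact that composition with the bi-Lipschitz map $x\mapsto x+\alpha(x)$ preserves that seminorm. Another is to write $\mathcal{B}_\alpha$ to display the dependence on $\alpha$, and smooth $u$ and $\alpha$ at the same frequency $N$:
\[
\mathcal{B}_\alpha u=\mathcal{B}_{\Pi_N\alpha}\Pi_Nu+\mathcal{B}_\alpha(u-\Pi_Nu)+(\mathcal{B}_\alpha-\mathcal{B}_{\Pi_N\alpha})\Pi_Nu,
\]
with $\Pi_N$ the Fourier truncation to $|\xi|\le N$. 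Put the first piece in $H^{k+1}$ and the other two in $H^k$ (the last via the mean value formula), then sum the resulting $K$-functional bounds over dyadic $N$.

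The same defect reaches part (2). Your bound $\|\widetilde\alpha\|_{H^k}\lesssim\|\alpha\|_{H^{k+2}}$ holds only for integer $k$, and for $s\notin\mathbb{N}$ it gives $\|\widetilde\alpha\|_{H^{s+2}}\lesssim\|\alpha\|_{H^{\lceil s\rceil+4}}$, more than the allowed $H^{s+4}$. That piece is easy to repair. After absorbing $(\alpha_x\circ(\mathrm{id}+\widetilde\alpha))\,\partial^k\widetilde\alpha$, the remaining terms contain at most $k-1$ derivatives of $\widetilde\alpha$ and $k$ of $\alpha$. So the induction can be run without loss, giving $\|\widetilde\alpha\|_{H^k}\lesssim_k\|\alpha\|_{H^k}$, and $\lceil s\rceil+2<s+4$ then covers real $s$. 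However, the final application of part (1) to $\mathcal{B}^{-1}$ at non-integer $s$ still needs the fractional argument above.
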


\begin{lemma}[\hspace{1sp}\cite{MR3011291}] \label{t:BaldiCommutator}
(1) Let $m_1, m_2 \in \mathbb{N}$, with $s \geqslant 2$, $m_1, m_2 \geqslant 0$, $m = m_1 + m_2$. Let $f(t, \cdot) \in H^{s+m}(\mathbb{T}, \mathbb{C})$. Then $[f, \mathcal{H}]u = f\mathcal{H}u - \mathcal{H}(fu)$ satisfies
\begin{equation} \label{HilbertCommutator}
 \left\| \partial_x^{m_1} [f, \mathcal{H}] \partial_x^{m_2} u \right\|_{H^s} \leqslant C(s) \bigl( \|f\|_{H^{m+2}}\|u\|_{H^s}  + \|f\|_{H^{m+s+2}}\|u\|_{L^2}  \bigr).   
\end{equation}

\noindent (2) Let $a : \mathbb{T} \to \mathbb{T}$ be a function, and $Au(t, x) = u(a(t), x)$. Then $[A, \mathcal{H}] = 0$.

\noindent (3) There exists a universal constant $\delta \in (0, 1)$ with the following property. Let $m_1, m_2 \in \mathbb{N}$, $s\geq 1$, $m = m_1 + m_2$, $\beta(t, \cdot) \in W^{s+m+1, \infty}(\mathbb{T}, \mathbb{R})$, with $\|\beta\|_{W^{1,\infty}} \leqslant \delta$. Let $\mB h(t, x) = h(t, x + \beta(t, x))$, $h \in H^s(\R \times\mathbb{T}, \mathbb{C})$. Then
\begin{equation} \label{BConjugationDiff}
\left\| \partial_x^{m_1} \bigl( \mB^{-1}\mathcal{H} \mB - \mathcal{H} \bigr) \partial_x^{m_2} h \right\|_{\dot{H}^s} \leqslant C(s, m) \bigl( \|\beta\|_{W^{m+1}} \|h\|_{\dot{H}^s} + \|\beta\|_{\dot{H}^{s+m+1}} \|h\|_{L^2} \bigr).
\end{equation}
\end{lemma}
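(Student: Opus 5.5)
The three parts are proved independently. Part (1) is a Fourier-side computation, part (2) is immediate, and part (3) goes through the integral kernel of $\mathcal{H}$ on $\mathbb{T}$.

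\emph{Part (2).} This is immediate. $\mathcal{H}$ is a Fourier multiplier in the space variable $x$ only, acting at each fixed time. $A$ only reparametrizes time, so $A\mathcal{H}u(t,x) = (\mathcal{H}u)(a(t),x) = \mathcal{H}(Au)(t,x)$.

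\emph{Part (1).} Expand $f=\sum_\ell \hat f(\ell)e^{i\ell x}$ and $u=\sum_j \hat u(j)e^{ijx}$. Since $\mathcal{H}$ has symbol $-i\,\mathrm{sign}(\xi)$, the $k$-th Fourier coefficient of $\partial_x^{m_1}[f,\mathcal{H}]\partial_x^{m_2}u$ is
\begin{equation*}
(ik)^{m_1}\sum_{j} i\,(\mathrm{sign}(k)-\mathrm{sign}(j))\,\hat f(k-j)\,(ij)^{m_2}\hat u(j).
\end{equation*}
The factor $\mathrm{sign}(k)-\mathrm{sign}(j)$ vanishes unless $k$ and $j$ have opposite signs, or one of them is $0$. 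In every surviving case $|k-j| = |k|+|j|$, hence $|k|,|j|\le |k-j|$. Therefore the weight $\langle k\rangle^s|k|^{m_1}|j|^{m_2}$ is bounded by $C\langle k-j\rangle^{s+m}$, and all derivatives and Sobolev weight fall on $f$. Young's inequality $\ell^1 * \ell^2\subset \ell^2$ then gives
\begin{equation*}
\big\|\partial_x^{m_1}[f,\mathcal{H}]\partial_x^{m_2}u\big\|_{H^s}\lesssim \|\langle\xi\rangle^{s+m}\hat f\|_{\ell^1}\|u\|_{L^2}\lesssim \|f\|_{H^{s+m+1}}\|u\|_{L^2}.
\end{equation*}
This is even stronger than \eqref{HilbertCommutator}, so the stated tame bound follows.

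\emph{Part (3).} I would use the kernel representation $\mathcal{H}h(y)=\frac{1}{2\pi}\,\mathrm{p.v.}\int_{\mathbb{T}}\cot\big(\tfrac{y-x}{2}\big)h(x)\,dx$. Write $\mB^{-1}\mathcal{H}\mB h(y)=(\mathcal{H}\mB h)(y+\tilde\beta(y))$, then substitute $z=x+\beta(x)$, equivalently $x=z+\tilde\beta(z)$, in the integral. This gives
\begin{equation*}
\big(\mB^{-1}\mathcal{H}\mB-\mathcal{H}\big)h(y)=\frac{1}{2\pi}\int_{\mathbb{T}}K(y,z)h(z)\,dz,
\end{equation*}
\begin{equation*}
K(y,z)=\cot\Big(\frac{Y-Z}{2}\Big)(1+\tilde\beta_z(z))-\cot\Big(\frac{y-z}{2}\Big),\quad Y=y+\tilde\beta(y),\ Z=z+\tilde\beta(z).
\end{equation*}
The key step is to show that $K$ has no singularity on the diagonal. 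Write $Y-Z=(y-z)\big(1+\int_0^1\tilde\beta_x(z+\theta(y-z))\,d\theta\big)$. Use the fact that $\cot(w/2)-2/w$ is smooth and expand; the $1/(y-z)$ singularities cancel. What remains is $K$ expressed as smooth functions of $\tilde\beta$ and of the divided differences $\int_0^1\partial_x^{k}\tilde\beta(z+\theta(y-z))\,d\theta$. The main obstacle, and the step needing the most care, is to carry this out while counting derivatives: each $\partial_y$ or $\partial_z$ applied to $K$ costs one derivative of $\tilde\beta$, plus one extra derivative from the divided difference. This explains the loss $m+1$.

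With $K$ in hand, the $\partial_x^{m_2}$ acting on $h$ is moved onto $K$ by integrating by parts in $z$, and $\partial_x^{m_1}$ falls on $K$ in $y$. The $H^s$ norm in $y$ is then estimated by tame Moser estimates \eqref{MoserTwo} applied to the composition expressions for $K$, using the smallness $\|\beta\|_{W^{1,\infty}}\le\delta$. This smallness makes the map $x\mapsto x+\beta$ a diffeomorphism and bounds $\tilde\beta$ by $\beta$ as in Lemma \ref{t:BaldiLemma}. Cauchy–Schwarz in $z$ then yields
\begin{equation*}
\|\beta\|_{W^{m+1,\infty}}\|h\|_{\dot H^s}+\|\beta\|_{\dot H^{s+m+1}}\|h\|_{L^2}.
\end{equation*}
Here the first term arises when the $s$ derivatives are redistributed onto $h$ rather than $K$, via an interpolation/tame splitting.
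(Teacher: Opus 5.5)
Parts (1) and (2) are fine. The paper proves none of this lemma; it only cites \cite{MR3011291}. In (1), your observation that $\operatorname{sign}k\neq\operatorname{sign}j$ forces $|k-j|=|k|+|j|$ puts every weight on $\hat f$. It even gives the stronger bound $\|f\|_{H^{s+m+1}}\|u\|_{L^2}$.

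Part (3) has a genuine gap: your derivative count is off by one, and the final ``redistribution'' does not follow from the kernel argument.

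\emph{The diagonal cancellation is second order.} Set $D\tilde\beta(y,z)=\int_0^1\tilde\beta_x(z+\theta(y-z))\,d\theta$. Then the singular part of $K$ is $\frac{2}{y-z}\cdot\frac{\tilde\beta_z(z)-D\tilde\beta}{1+D\tilde\beta}$. The numerator $\tilde\beta_z(z)-D\tilde\beta$ is $O(|y-z|)$ only through $\tilde\beta_{xx}$, so $K$ itself already costs two derivatives of $\tilde\beta$. Indeed $K=-2\partial_z L$ with $L=\log\bigl|\sin(\tfrac{Y-Z}{2})/\sin(\tfrac{y-z}{2})\bigr|$, and it is $L$ that costs one derivative. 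Hence $\partial_y^{s+m_1}\partial_z^{m_2}K$ costs $s+m+2$ derivatives. Cauchy--Schwarz then gives at best $\|\beta\|_{H^{s+m+2}}\|h\|_{L^2}$, not $\|\beta\|_{\dot H^{s+m+1}}\|h\|_{L^2}$.

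\emph{The redistribution step fails too.} In kernel form you can move $\partial_y$ onto $h(z)$ only through $\partial_y=(\partial_y+\partial_z)-\partial_z$ and integration by parts. The top-order term is then, up to sign, $\int\partial_z^{m}K(y,z)\,\partial_z^{s}h(z)\,dz$. Any kernel bound for it costs $m+2$ derivatives of $\beta$. Getting $\|\beta\|_{W^{1,\infty}}\|h\|_{\dot H^s}$ for $m=0$ would require a Calder\'on-type commutator theorem, not Moser estimates.

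\emph{The target inequality is false as printed,} so no argument can end where yours does. Replace $\beta$ by $\epsilon\beta$; then $\mathcal{B}^{-1}\mathcal{H}\mathcal{B}-\mathcal{H}=-\epsilon[\beta,\mathcal{H}]\partial_x+O(\epsilon^2)$. Letting $\epsilon\to0$, \eqref{BConjugationDiff} would imply
$\|[\beta,\mathcal{H}]\partial_x h\|_{\dot H^s}\lesssim\|\beta\|_{W^{m+1,\infty}}\|h\|_{\dot H^s}+\|\beta\|_{\dot H^{s+m+1}}\|h\|_{L^2}$.
Take $m=0$, $s=2$, and:
\begin{itemize}
\item $h=\sum_{-N\le j\le -N/2}e^{ijx}$;
\item $\beta=2\,\mathrm{Re}\,\beta_+$ with $\beta_+=\sum_{M\le n\le M+N}e^{inx}$ and $M=N^{5/4}$.
\end{itemize}
By your own sign computation, $[\beta,\mathcal{H}]\partial_x h=2i\beta_+\partial_x h$. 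This function has about $N/2$ Fourier coefficients of size $\gtrsim N^2$ at frequencies $\approx M$. So the left side is $\gtrsim M^2N^{5/2}=N^5$. The right side is $\lesssim MN\cdot N^{5/2}+M^3\sqrt N\cdot\sqrt N\approx N^{19/4}$.

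\emph{What your method does give.} If you keep the count honest, you get, for instance, $\lesssim\|\beta\|_{W^{s+m+1,\infty}}\|h\|_{H^1}$. For this, work with $L$ and keep one derivative on $h$; since $s\ge1$, this controls by $\|h\|_{H^s}$. Alternatively, with $K$ and Hilbert--Schmidt you get $\lesssim\|\beta\|_{H^{s+m+2}}\|h\|_{L^2}$, with a constant depending on a low norm of $\beta$. The latter suffices for the estimate of $\mathcal{R}_{\mathcal{B}}$ in Section~\ref{s:Top}, where $m=3$ and the norms are $H^5$ and $H^{s+5}$.
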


\subsection{Estimates for the pseudo-differential operators}

Given $\al \in \R$, the operator $|D_x|^\al$ is the Fourier multiplier with symbol $g_\al(\xi)$, where $g_\al:\R \rightarrow \R$ is a $C^\infty$ function such that $g_\al(\xi) \geq 0$ for all $\xi \in \R$, $g_\al(\xi) = |\xi|^\al$ for $|\xi| \geq \f23$, and $g_\al(\xi) = 0$ for $|\xi| \leq \f13$.
For the periodic function $h(x) = \sum_{j\in \Z}h_j e^{ijx}$, one has $|D_x|^\al h(x) = \sum_{j\neq 0}h_j |j|^\al e^{ijx}$, for any $\al \in \R$.
Other differential operators on the periodic functions are defined similarly.

For a Fourier multiplier $\Lambda$ and any multiplication operator $h \mapsto a h$, we have the following symbolic composition formula.
\begin{equation}  \label{SymbolExpansion}
\Lambda (a u) \sim \sum_{n=0}^\infty \frac{1}{i^n n!} \, (\p_x^n a)(x)   \mathrm{Op}(\p^n_\xi g) u,
\end{equation}
where Op$(\p^n_\xi g)$ is the Fourier multiplier with symbol $\p^n_\xi g(\xi)$. 

Consider the periodic Fourier integral operator of amplitude $a$ and phase function $\phi$, 
\begin{equation} \label{def A FIO with amplitude}
Au(x) = \sum_{k \in \Z} \hat u_k \, a(x,k) \, e^{i \phi(x,k)}, \qquad 
\phi(x,k) = kx + |k|^{1/2} \b(x).
\end{equation}
We have the following result on the composition formula.

\begin{lemma}[\hspace{1sp}\cite{MR3356988}] \label{t:Composition} 
Let $A$ be the operator \eqref{def A FIO with amplitude}, 
with $\|\b\|_{W^{1,\infty}} \leq 1/4$ and $\| \b \|_{H^2} \leq 1/2$. 
Let 
\[
r,m,s_0 \in \R, \quad 
m \geq 0, \quad
s_0 > 1/2, \quad 
N \in \N, \quad 
N \geq 2(m + r + 1) + s_0. 
\] 
Then, we have the composition formula.
\begin{align*}
&|D_x|^r A u = \sum_{\a=0}^{N-1} B_\a u + R_N u, \\   
B_\a u(x) 
= \binom{r}{\a} \,  \sum_{k \in \Z\backslash \{ 0\}} &|k|^{r-\a} \, (- i \, \text{sgn} \, k)^\a \, \hat u_k \,  e^{ikx} \, \p_x^\a \big\{ a(x,k) e^{i f(k) \b(x)} \big\},
\end{align*}
namely 
\[
B_\a u = F_\a |D_x|^{r-\a} \mH^\a u,
\quad 
F_\a v(x) := 
\binom{r}{\a} \, 
\sum_{k \in \Z} \hat v_k \, e^{ikx} \, \p_x^\a \big\{ a(x,k) e^{i f(k) \b(x)} \big\},
\]
and 
$\binom{r}{\a} \, := \frac{r (r-1) \ldots (r-\a+1)}{\a!}$. 
For every $s \geq s_0$, the remainder satisfies
\begin{equation} \label{final tame estimate}
\| R_N |D_x|^m u \|_{H^s} 
\leq 
C(s) \Big\{ \mK_{2(m+r+s_0+1)} \, \| u \|_{H^s}
+ \mK_{s + N + m + 2} \, \| u \|_{H^{s_0}} \Big\},
\end{equation}
where $\mK_\mu := | a |_{\mu} + | a |_{1}  \| \b \|_{H^{\mu + 1}}$ for $\mu \geq 0$ 
and $| a |_\mu := \sup_{k \in \Z} \| a(\cdot,k) \|_{H^\mu}$. 
Moreover,
\[
\mH |D_x|^r A u = \sum_{\a=0}^{N-1} B_\a \mH u + \tilde R_N u,
\]
where $\tilde R_N$ satisfies the same estimate \eqref{final tame estimate} as $R_N$.
\end{lemma}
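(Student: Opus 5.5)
The plan is to compute $|D_x|^r A u$ directly on the Fourier side and Taylor expand the multiplier. For each fixed $k$, write the amplitude together with the oscillating factor as
\[
b(x,k):=a(x,k)e^{i|k|^{1/2}\b(x)},\qquad b(x,k)=\sum_{j\in\Z}\hat b_j(k)e^{ijx},
\]
so that $Au=\sum_k \hat u_k\sum_j \hat b_j(k)e^{i(k+j)x}$ and
\[
|D_x|^rAu=\sum_k\hat u_k\sum_j |k+j|^r\hat b_j(k)e^{i(k+j)x}.
\]
Here $|\cdot|^r$ is understood through the smooth cutoff symbol $g_r$ of the appendix. The terms with $k=0$ contribute a smooth rank-one piece, which I put into $R_N$ directly.

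For $k\neq0$, I split the $j$-sum into the regions $|j|\le|k|/2$ and $|j|>|k|/2$. In the first region, $|k+j|^r=|k|^r(1+j/k)^r$, and I expand by Taylor's formula with integral remainder:
\[
|k+j|^r=\sum_{\a=0}^{N-1}\binom{r}{\a}|k|^{r-\a}(\mathrm{sgn}\,k)^\a j^\a+\rho_N(k,j),\qquad |\rho_N(k,j)|\lesssim_N |j|^N|k|^{r-N}.
\]
Since $j^\a\hat b_j(k)$ is the $j$-th Fourier coefficient of $(-i\p_x)^\a b(\cdot,k)$, resumming the main terms over all $j$ gives exactly
\[
\binom{r}{\a}|k|^{r-\a}(-i\,\mathrm{sgn}\,k)^\a e^{ikx}\p_x^\a\{a(x,k)e^{i|k|^{1/2}\b(x)}\},
\]
which is $B_\a u$. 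Adding and subtracting these main terms on $|j|>|k|/2$ leaves a remainder $R_N$ of two kinds: the Taylor remainder $\rho_N$ on $|j|\le|k|/2$, and on $|j|>|k|/2$ the terms $|k+j|^r\hat b_j(k)$ and $|k|^{r-\a}j^\a\hat b_j(k)$. On the far region I bound both kinds by $\langle j\rangle^{|r|+N}|\hat b_j(k)|$, because there $|k|\lesssim|j|$.

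The key (and hardest) step is the tame bound \eqref{final tame estimate} for $R_N|D_x|^m$. The difficulty is that $x$-derivatives of $b$ are not bounded uniformly in $k$: each derivative falling on $e^{i|k|^{1/2}\b}$ produces a factor $|k|^{1/2}\b_x$. Hence
\[
\|\p_x^\mu b(\cdot,k)\|_{L^2}\lesssim |k|^{\mu/2}\mK_\mu ,
\]
using $\|\b\|_{W^{1,\infty}}\le 1/4$, the Fa\`a di Bruno formula and Moser/interpolation estimates to get the tame form $|a|_\mu+|a|_1\|\b\|_{H^{\mu+1}}$. Inserting this into the near-region remainder gives a factor of size
\[
|k|^{r-N}\,|k|^{N/2}\,|k|^{m}=|k|^{r+m-N/2}.
\]
The hypothesis $N\ge2(m+r+1)+s_0$ makes this at most $|k|^{-1-s_0/2}$, which is summable in $k$ against $\|u\|_{H^{s_0}}$.

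To reach $H^s$, I distribute $\langle k+j\rangle^s\lesssim\langle k\rangle^s+\langle j\rangle^s$. When $\langle k\rangle^s$ falls on $u$, this gives the term $\mK_{2(m+r+s_0+1)}\|u\|_{H^s}$. When $\langle j\rangle^s$ falls on $b$, this gives $\mK_{s+N+m+2}\|u\|_{H^{s_0}}$, using Cauchy–Schwarz in $j$ with the decay $|\hat b_j|\lesssim\langle j\rangle^{-\mu}\|b\|_{H^\mu}$. The far region is handled the same way: $|k|\lesssim|j|$ lets every power of $|k|$ be traded for decay of $\hat b_j$, at the price of the same number of derivatives on $b$.

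Finally, the statement with $\mH$ follows from the same computation, since $\mH$ is the multiplier $-i\,\mathrm{sgn}$. On $|j|\le|k|/2$ we have $\mathrm{sgn}(k+j)=\mathrm{sgn}(k)$, so the expansion produces $B_\a\mH u$ exactly. The discrepancy is supported in $|j|>|k|/2$, so it is absorbed into $\tilde R_N$ by the far-region estimate above.
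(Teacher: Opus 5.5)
The paper gives no proof of this lemma; it is quoted from \cite{MR3356988}, so your argument has to stand on its own. The algebraic part is correct. On $|j|\le|k|/2$, expanding $|k|^r(1+j/k)^r$ reproduces $B_\a$ exactly, and the $\mH$ version follows because $\mathrm{sgn}(k+j)=\mathrm{sgn}\,k$ there.

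\textbf{The gap is the tame bound on the far region $|j|>|k|/2$.} There $\langle k+j\rangle\lesssim\langle j\rangle$. After inserting $|k|^m$, $|k+j|^r$ and the subtracted terms $|k|^{r-\a}j^\a$ with $\a\le N-1$, the kernel carries a weight of size about $\langle j\rangle^{s+m+N}$. You must therefore control $\langle j\rangle^{s+m+N-1}|\hat b_j(k)|$ in $\ell^2_j$, uniformly in $k$, by $\mK_{s+N+m+2}$; the sum in $k$ then costs $\|u\|_{H^{s_0}}$.

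Your only information on these coefficients is
$|\hat b_j(k)|\le |j|^{-\mu}\|\p_x^\mu b(\cdot,k)\|_{L^2}\lesssim |j|^{-\mu}|k|^{\mu/2}\mK_\mu$.
On $|j|>|k|/2$ this gives only $|j|^{-\mu/2}\mK_\mu$. So trading powers of $|k|$ for decay in $j$ is not ``at the price of the same number of derivatives on $b$'': each derivative returns a factor $|k|^{1/2}$, as you observed yourself. Carried out honestly, your argument needs $\mu$ of order $2(s+m+N)$. It proves $\|R_N|D_x|^m u\|_{H^s}\lesssim \mK_{2(m+r+s_0+1)}\|u\|_{H^s}+\mK_{2(s+N+m)+O(1)}\|u\|_{H^{s_0}}$, which is not the stated estimate.

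Two smaller repairs are needed in the near region. First, use $\langle k+j\rangle\le 2\langle k\rangle$ rather than letting $\langle j\rangle^s$ fall on $b$. Second, use $|j|^N\le |j|^{N_0}|k|^{N-N_0}$ with $N_0=2(m+r+1)+s_0$, so the index $2(m+r+s_0+1)$ does not grow with $N$.

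\textbf{The missing ingredient is a $k$-uniform non-stationary phase bound for the high modes.} Write $\hat b_j(k)=\frac{1}{2\pi}\int_\T a(x,k)e^{i\Phi(x)}\,dx$ with $\Phi(x)=|k|^{1/2}\b(x)-jx$.
\begin{itemize}
\item For $k\neq 0$ and $|j|>|k|/2$ one has $|j|\ge |k|^{1/2}/2$. With $\|\b\|_{W^{1,\infty}}\le 1/4$ this gives $|\Phi'|\ge |j|/2$.
\item Integrate by parts $\mu$ times with $(i\Phi')^{-1}\p_x$. Each derivative of $1/\Phi'$ produces factors $|k|^{1/2}\p_x^{q+1}\b/\Phi'$. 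These are $O(|\p_x^{q+1}\b|)$, since $|k|^{1/2}\lesssim|j|$.
\item Gagliardo--Nirenberg with $\|\b_x\|_{L^\infty}\le 1/4$ then yields $|\hat b_j(k)|\lesssim_\mu |j|^{-\mu}\big(|a|_\mu+|a|_1\|\b\|_{H^{\mu+1}}\big)=|j|^{-\mu}\mK_\mu$, uniformly in $k$.
\end{itemize}
This is exactly where the $\|\b\|_{H^{\mu+1}}$ in the definition of $\mK_\mu$ comes from. Taking $\mu$ of order $s+m+N+1$ gives the term $\mK_{s+N+m+2}\|u\|_{H^{s_0}}$. Your $|k|^{\mu/2}$ derivative bound is the right tool only in the near region, where stationary points can occur. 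That is why the doubled index $2(m+r+s_0+1)$ appears only in the $\|u\|_{H^s}$ term.
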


We assume in addition that the amplitude $a$ is of order zero in $k$ and is a perturbation of $1$: $a(x,k) = 1 + b(x,k)\,$.
Denote $|b|_s := \sup_{k \in \Z} \| b(\cdot\,,k) \|_{H^s}$.
Then we have the following bound for the Fourier integral operator $A$.
\begin{lemma}[\hspace{1sp}\cite{MR3356988}] \label{t:Einv} 
There exist a universal constant $\delta > 0$ with the following properties. 
Let $\b \in H^3(\T)$ and $b(\cdot\,,k) \in H^3(\T)$ for all $k \in \Z$. 
If $\| \b \|_{H^3} + |b|_3 \leq \delta$, then $A$ is invertible from $L^2(\T)$ onto itself, with 
\[
\| A u \|_{L^2} + \| A\inv u \|_{L^2} + \|A^* u\|_{L^2} +  \|(A^*)^{-1} u\|_{L^2} \leq C \, \| u \|_{L^2} \,,
\]
where $C>0$ is a universal constant, and $A^*$ is the adjoint operator of $A$. 
If in addition, $\a \geq 1$ is an integer, $\b \in H^{\a+2}(\T)$ and $b(\cdot\,,k) \in H^{\a+2}(\T)$ for all $k \in \Z$, then 
\begin{equation*}
\| A u \|_{H^\a} + \| A\inv u \|_{H^\a} + \| A^* u \|_{H^\a} + \| (A^*)\inv u \|_{H^\a}
\leq C(\a) \, \Big( \| u \|_{H^\a} + (|b|_{\a+2} + \| \b\|_{H^{\a+2}}) \| u \|_{H^1} \Big), 
\end{equation*}
where $C(\a)>0$ depends only on $\a$.
\end{lemma}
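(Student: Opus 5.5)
The plan is to treat $A$ as an $L^2$-perturbation of the identity. I would prove boundedness and a uniform lower bound through $A^*A$, obtain surjectivity from a continuity (index) argument, and then get the $H^\a$ bounds by commuting derivatives through $A$ and $A\inv$.

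\textbf{Step 1: $L^2$ bounds via $A^*A$.} In the Fourier basis, $A^*A$ has matrix entries
\[
(A^*A)_{jk}=\frac{1}{2\pi}\int_\T \overline{a(x,j)}\,a(x,k)\,e^{i\Phi_{jk}(x)}\,dx,\qquad \Phi_{jk}(x)=(k-j)x+\bigl(|k|^{1/2}-|j|^{1/2}\bigr)\b(x).
\]
Diagonal entries: here $\Phi_{jj}=0$, so $(A^*A)_{jj}=\frac1{2\pi}\int|1+b(x,j)|^2dx=1+O(|b|_0)$.

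Off-diagonal entries ($j\ne k$): since $\bigl||k|^{1/2}-|j|^{1/2}\bigr|\le|k-j|$ and $\|\b\|_{W^{1,\infty}}\lesssim\|\b\|_{H^3}\le\delta$, we have $|\p_x\Phi_{jk}|\ge\frac12|k-j|$. Two integrations by parts against $e^{i\Phi_{jk}}$ then give $|(A^*A)_{jk}|\lesssim \delta\,|k-j|^{-2}$. This uses two derivatives of $a$ and of $\b$, hence the $H^3$ hypothesis. The contribution of the constant amplitude $1$ with $\b=0$ vanishes, so every remaining term carries a factor $\delta$.

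Schur's test then gives $\|A^*A-I\|_{\mathcal L(L^2)}\le C\delta$. Hence $\|Au\|_{L^2}\le C\|u\|_{L^2}$ and $\|Au\|_{L^2}\ge\frac12\|u\|_{L^2}$ for $\delta$ small. Boundedness of $A^*$ follows by duality.

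\textbf{Step 2: surjectivity.} Consider the family $A_t$ with amplitude $1+tb$ and phase $kx+t|k|^{1/2}\b$, for $t\in[0,1]$. By the same matrix estimates, $t\mapsto A_t$ is norm-continuous. Each $A_t$ satisfies the uniform lower bound of Step 1, so each $A_t$ is semi-Fredholm with zero kernel. Since $A_0=I$, the index is $0$ along the whole family, and $A_1=A$ is invertible with $\|A\inv\|\le2$. Consequently $(A^*)\inv=(A\inv)^*$ is bounded as well.

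\textbf{Step 3: $H^\a$ bounds, the main obstacle.} Differentiating gives $\p_x A u = A\p_x u + A' u$, where $A'$ has amplitude $a_x+i|k|^{1/2}\b'a$. Thus $A'$ has order $1/2$, with coefficients controlled by $|b|_{2}+\|\b\|_{H^2}$. Iterating, $\p_x^\a A u$ is $A\p_x^\a u$ plus FIOs of order $\le\a-\frac12$. The term where all derivatives fall on the amplitude or phase is bounded tamely, by $(|b|_{\a+2}+\|\b\|_{H^{\a+2}})\|u\|_{H^1}$. The difficulty is keeping the estimate tame and exact in the regularity count despite the half-order loss $|k|^{1/2}\b'$. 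I would handle this by first applying Step 1's $L^2$ bound, with the extra factor $|k|^{1/2}$ absorbed into $|D|^{1/2}u$, and then interpolating between $H^1$ and $H^\a$.

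For the inverse, I would write
\[
\p_x^\a A\inv u=A\inv\bigl(\p_x^\a u-[\p_x^\a,A]A\inv u\bigr).
\]
The commutator has order $\a-\frac12$, so induction on $\a$ in half-integer steps, starting from the $L^2$ bound, closes the estimate; smallness of $\delta$ absorbs the top-order terms. The adjoints $A^*$ and $(A^*)\inv$ are handled the same way: $A^*$ is again a sum of FIO-type operators with conjugate phase, to which the commutator identity applies verbatim. Alternatively, one can transfer the bounds by duality after conjugating by $\langle D\rangle^\a$.
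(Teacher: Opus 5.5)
The paper gives no proof of this lemma; it quotes it from \cite{MR3356988}, so your argument has to stand on its own. Step 1 is fine. For $j\neq k$, two integrations by parts leave only terms carrying a derivative of the amplitude product or of $\beta'$, so $|(A^*A)_{jk}|\lesssim\delta|k-j|^{-2}$. Schur's test then gives $\|A^*A-I\|_{\mathcal{L}(L^2)}\lesssim\delta$, hence boundedness of $A$ and $A^*$, injectivity of $A$, and closed range.

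\textbf{Step 2 fails.} The map $t\mapsto A_t$ is not norm-continuous, and your matrix estimates cannot show that it is. The diagonal entries of $(A_t-A_s)^*(A_t-A_s)$ are $\frac{1}{2\pi}\int|(1+tb)e^{i(t-s)|k|^{1/2}\beta}-(1+sb)|^2\,dx$, and these are not small uniformly in $k$, however close $s$ is to $t$. Concretely, take $b=0$ and $\beta\equiv c\neq0$. Then $A_t$ is the Fourier multiplier $e^{itc|k|^{1/2}}$, and $\|A_t-A_s\|_{\mathcal{L}(L^2)}=\sup_k|e^{i(t-s)c|k|^{1/2}}-1|=2$ for every $t\neq s$.

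What you actually have is strong continuity plus the uniform lower bound $\|A_tu\|\geq\frac12\|u\|$, and that does not propagate surjectivity. For example, the right translations $(W_tf)(x)=f(x-t)\mathbf{1}_{x>t}$ on $L^2(0,\infty)$ are a strongly continuous family of isometries joining $I$ to a non-surjective operator. So surjectivity of $A$ is unproved. With it, the existence and bounds of $A^{-1}$ and $(A^*)^{-1}$ are unproved too, including the inverse half of Step 3.

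\textbf{What is missing} is an independent proof that $\ker A^*=\{0\}$, for instance $\|AA^*-I\|_{\mathcal{L}(L^2)}\lesssim\delta$. This is the harder half of the lemma. The kernel of $AA^*$ is $\frac{1}{2\pi}\sum_k a(x,k)\overline{a(y,k)}e^{ik(x-y)+i|k|^{1/2}(\beta(x)-\beta(y))}$, not a single oscillatory integral in $x$, so you must exploit the slow variation of $k\mapsto|k|^{1/2}$.

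You can keep your homotopy if you replace norm continuity by a Duhamel comparison with a unitary group:
\begin{itemize}
\item Let $G=\frac12(\beta|D|^{1/2}+|D|^{1/2}\beta)$, which is self-adjoint for smooth real $\beta$. Set $U(t)=e^{itG}$ and $E_s:=\partial_sA_s-iGA_s$. Then $U(t_0-t)A_t=A_{t_0}+\int_{t_0}^tU(t_0-s)E_s\,ds$.
\item For smooth $\beta,b$, the order-$\frac12$ parts $i\beta A_s|D|^{1/2}$ cancel. Lemma \ref{t:Composition}, together with your Step 1 bound for general amplitudes, then shows $E_s$ is bounded on $L^2$.
\item Step 1 gives $\|A_t^{-1}\|\leq 2$ whenever $A_t$ is invertible. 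So invertibility at $t_0$ passes to all $|t-t_0|<(2\sup_s\|E_s\|)^{-1}$, and finitely many steps reach $t=1$.
\item The $H^3$ case follows by approximating $\beta,b$ and passing to weak limits, since the bound $\|A^{-1}\|\le2$ depends only on the $H^3$ smallness.
\end{itemize}
Once $L^2$-invertibility is secured, your commutator identity for $\partial_x^\alpha A^{-1}$ is a reasonable route to the $H^\alpha$ bounds.
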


\subsection{Nash-Moser-H\"ormander theorem}
In this section, we recall the Nash-Moser-H\"ormander theorem proved in \cite{MR3711883}.
Let $(E_a)_{a\geq 0}$ be a decreasing family of Banach spaces with continuous injections $E_b \hookrightarrow E_a$,
\begin{equation}
    \|u\|_a \leq \|u\|_b \quad \text{for } a \leq b. 
\end{equation}
Set $E_\infty = \cap_{a \geq 0} E_a$ with the weakest topology making the injections $E_\infty \hookrightarrow E_a$ continuous. 
Assume that $S_j : E_0 \to E_\infty$ for $j = 0, 1, \dots$ are linear operators such that, with constants $C$ bounded when $a$ and $b$ are bounded, and independent of $j$,
\begin{align}
    \|S_j u\|_a &\leq C \|u\|_a && \text{for all } a;  \\
    \|S_j u\|_b &\leq C 2^{j(b-a)} \|S_j u\|_a && \text{if } a < b;  \\
    \|u - S_j u\|_b &\leq C 2^{-j(a-b)} \|u - S_j u\|_a && \text{if } a > b;  \\
    \|(S_{j+1} - S_j)u\|_b &\leq C 2^{j(b-a)} \|(S_{j+1} - S_j)u\|_a && \text{for all } a, b. 
\end{align}
Set
\begin{equation}
    R_0 u := S_1 u, \qquad R_j u := (S_{j+1} - S_j)u, \quad j \geq 1. 
\end{equation}
Thus,
\begin{equation}
    \|R_j u\|_b \leq C 2^{j(b-a)} \|R_j u\|_a \quad \text{for all } a, b. 
\end{equation}

We also assume that
\begin{equation}
    \|u\|_a^2 \leq C \sum_{j=0}^\infty \|R_j u\|_a^2 \quad \forall a \geq 0, 
\end{equation}
with $C$ bounded for $a$ bounded.

Now let us suppose that we have another family $F_a$ of decreasing Banach spaces with smoothing operators having the same properties as above. We use the same notation also for the smoothing operators.

\begin{theorem}[\hspace{1sp}\cite{MR3711883}] \label{t:Nash}
 Let $a_1, a_2, \alpha, \beta, a_0, \mu$ be real numbers with
\begin{equation*}
    0 \leq a_0 \leq \mu \leq a_1, \qquad a_1 + \frac{\beta}{2} < \alpha < a_1 + \beta, \qquad 2\alpha < a_1 + a_2. 
\end{equation*}
Let $V$ be a convex neighborhood of $0$ in $E_\mu$. Let $\Phi$ be a map from $V$ to $F_0$ such that $\Phi : V \cap E_{a+\mu} \to F_a$ is of class $C^2$ for all $a \in [0, a_2 - \mu]$, with
\begin{equation} \label{NineTen}
    \|\Phi''(u)[v, w]\|_a \leq C \big(\|v\|_{a+\mu}\|w\|_{a_0} + \|v\|_{a_0}\|w\|_{a+\mu} + \|u\|_{a+\mu}\|v\|_{a_0}\|w\|_{a_0}\big) 
\end{equation}
for all $u \in V \cap E_{a+\mu}$, $v, w \in E_{a+\mu}$. Also assume that $\Phi'(v)$, for $v \in E_\infty \cap V$ belonging to some ball $\|v\|_{a_1} \leq \delta_1$, has a right inverse $\Psi(v)$ mapping $F_\infty$ to $E_{a_2}$, and that
\begin{equation} \label{NineEleven}
    \|\Psi(v)g\|_a \leq C(\|g\|_{a+\beta-\alpha} + \|g\|_0 \|v\|_{a+\beta}) \quad \forall a \in [a_1, a_2]. 
\end{equation}
For all $A > 0$ there exist $\delta, C_1 > 0$ such that, for every $g \in F_\beta$ satisfying
\begin{equation} \label{NineTwelve}
    \sum_{j=0}^\infty \|R_j g\|_\beta^2 \leq A \|g\|_\beta^2, \quad \|g\|_\beta \leq \delta, 
\end{equation}
there exists $u \in E_\alpha$, with $\|u\|_\alpha \leq C_1 \|g\|_\beta$, solving $\Phi(u) = \Phi(0) + g$.

Moreover, let $c > 0$ and assume that \eqref{NineTen} holds for all $a \in [0, a_2 + c - \mu]$, $\Psi(v)$ maps $F_\infty$ to $E_{a_2+c}$, and \eqref{NineEleven} holds for all $a \in [a_1, a_2 + c]$. If $g$ satisfies \eqref{NineTwelve} and, in addition, $g \in F_{\beta+c}$ with
\begin{equation}
    \sum_{j=0}^\infty \|R_j g\|_{\beta+c}^2 \leq A_c \|g\|_{\beta+c}^2 
\end{equation}
for some $A_c$, then the solution $u$ belongs to $E_{\alpha+c}$, with $\|u\|_{\alpha+c} \leq C_{1,c} \|g\|_{\beta+c}$.
\end{theorem}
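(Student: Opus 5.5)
The statement above is Theorem \ref{t:Nash}, which is quoted from \cite{MR3711883}. If one wanted to reprove it rather than cite it, I would follow H\"ormander's modified Newton scheme with smoothing, refined by the Baldi--Haus $\ell^2$ summation to reach the sharp range $a_1+\beta/2<\alpha<a_1+\beta$.

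\textbf{The iteration.} Set $u_0=0$ and $u_{j+1}=u_j+\dot u_j$, where
\begin{equation*}
\dot u_j=\Psi(v_j)g_j,\qquad v_j=S_ju_j .
\end{equation*}
The linearization is taken at the smoothed point $v_j\in E_\infty$, where the right inverse exists. The error at step $j$ splits as $e_j=e_j'+e_j''$, with
\begin{equation*}
e_j'=\Phi(u_{j+1})-\Phi(u_j)-\Phi'(u_j)\dot u_j,\qquad e_j''=\bigl(\Phi'(u_j)-\Phi'(v_j)\bigr)\dot u_j .
\end{equation*}
Here $e_j'$ is the quadratic (Taylor) error and $e_j''$ is the substitution error. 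Write $E_j=\sum_{k\le j}e_k$, and define
\begin{equation*}
g_0=S_0g,\qquad g_j=(S_j-S_{j-1})g-(S_j-S_{j-1})E_{j-1}-S_je_{j-1}.
\end{equation*}
Telescoping then gives
\begin{equation*}
\Phi(u_{j+1})-\Phi(0)=S_jg+(I-S_j)E_{j-1}+e_j .
\end{equation*}
So once $u_j\to u$ in $E_\alpha$ and $e_j\to0$, the limit satisfies $\Phi(u)=\Phi(0)+g$.

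\textbf{The induction.} The hypothesis would be, for $a$ in a range $[0,\tilde a]$,
\begin{equation*}
\|\dot u_k\|_a\le K\,\delta_k\,2^{k(a-\alpha)},
\end{equation*}
where $\delta_k$ is a sequence built from $\|R_kg\|_\beta$ together with a geometric tail, with $\sum_k\delta_k^2\lesssim\|g\|_\beta^2$. Assumption \eqref{NineTwelve} is what makes this sequence square-summable rather than merely bounded by $\|g\|_\beta$.

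From the hypothesis one derives the following bounds.
\begin{itemize}
\item Bounds on $u_j$ and $v_j$ in $E_a$ follow from the smoothing properties of $S_j$.
\item Bounds on $e_j'$ and $e_j''$ in $F_a$ follow from \eqref{NineTen} and Taylor's formula. The substitution error uses $\|(I-S_j)u_j\|_{a_0+\mu}$, which gains $2^{-j(\alpha-a_0-\mu)}$.
\item Bounds on $g_j$ then follow from its definition.
\item Finally, \eqref{NineEleven} gives $\|\dot u_j\|_a\le C\bigl(\|g_j\|_{a+\beta-\alpha}+\|g_j\|_0\|v_j\|_{a+\beta}\bigr)$, which is used to recover the hypothesis at step $j+1$.
\end{itemize}
The estimate is needed only at the two endpoints $a=0$ and $a=\tilde a\ge a_2$; intermediate values follow by interpolation through the smoothing inequalities. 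Smallness of $\|g\|_\beta\le\delta$ keeps every $v_j$ in $V$ and in the ball $\|v\|_{a_1}\le\delta_1$.

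\textbf{Main obstacle.} The hard step is closing the induction in the quadratic error term without losing summability at the sharp exponents. The products $\|\dot u_k\|_{a_0}\|\dot u_k\|_{a+\mu}$ must be bounded by $\delta_k$-weighted geometric factors, which requires $2\alpha<a_1+a_2$ and $\alpha>a_1+\beta/2$. I would first try Cauchy--Schwarz in $k$ against the $\ell^2$ sequence $\delta_k$, as in \cite{MR3711883}. If the exponents do not close, I would sacrifice a small $\varepsilon$ in $\alpha$.

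\textbf{Convergence and higher regularity.} With the induction closed, $\sum_k\dot u_k$ converges in $E_{\alpha'}$ for $\alpha'<\alpha$, and the $\ell^2$ bound gives convergence in $E_\alpha$ with $\|u\|_\alpha\le C_1\|g\|_\beta$. For the higher-regularity statement, I would rerun the same induction on the shifted scale $a\mapsto a+c$, using the extended hypotheses to propagate $\|\dot u_k\|_{a+c}\lesssim\delta_k^{(c)}2^{k(a+c-\alpha-c)}$, where $\delta_k^{(c)}$ is built from $\|R_kg\|_{\beta+c}$. This yields $\|u\|_{\alpha+c}\le C_{1,c}\|g\|_{\beta+c}$.
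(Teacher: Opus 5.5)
The paper gives no proof of this theorem; it quotes it from \cite{MR3711883}. Your skeleton is the skeleton of that proof: H\"ormander's scheme linearized at the smoothed points $v_j=S_ju_j$, the Taylor and substitution errors $e_j'$, $e_j''$, and an induction weighted by an $\ell^2$ sequence $\delta_k$ coming from \eqref{NineTwelve}.

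The genuine gap is your final step, which is exactly where \cite{MR3711883} improves on H\"ormander. From $\|\dot u_k\|_\alpha\le K\delta_k$ with $(\delta_k)\in\ell^2$, nothing forces $\sum_k\dot u_k$ to converge in $E_\alpha$, because in a general Banach scale the triangle inequality needs $\ell^1$. Such bounds alone only give $u\in E_{\alpha'}$ for $\alpha'<\alpha$. Your fallback of giving up $\varepsilon$ in $\alpha$ would forfeit precisely the sharp conclusion.

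The missing ingredient is the structural hypothesis $\|w\|_a^2\le C\sum_j\|R_jw\|_a^2$ on the scale $(E_a)$, which your outline never uses, combined with almost-orthogonality of the increments. The argument runs as follows.
\begin{enumerate}
\item Use $\|R_jw\|_b\le C2^{j(b-a)}\|R_jw\|_a$, $\|R_jw\|_a\le C\|w\|_a$, and your bounds at the two ends $a_1<\alpha<a_2$ of the induction range.
\item This gives $\|R_j\dot u_k\|_\alpha\lesssim\delta_k2^{-(j-k)(a_2-\alpha)}$ for $k\le j$, and $\|R_j\dot u_k\|_\alpha\lesssim\delta_k2^{-(k-j)(\alpha-a_1)}$ for $k>j$.
\item Hence $\|R_j(u_n-u_m)\|_\alpha\lesssim\sum_{m\le k<n}2^{-\varepsilon|j-k|}\delta_k$, with $\varepsilon=\min\{a_2-\alpha,\alpha-a_1\}>0$.
\item Young's inequality for this discrete convolution, together with the orthogonality hypothesis applied to $u_n-u_m\in E_{a_2}$, gives $\|u_n-u_m\|_\alpha^2\lesssim\sum_{m\le k<n}\delta_k^2$.
\end{enumerate}
This yields convergence in $E_\alpha$ and $\|u\|_\alpha\lesssim\|g\|_\beta$. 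The same argument, with bounds at $a_1$ and $a_2+c$, is what gives $u\in E_{\alpha+c}$.

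Three further points need repair.

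First, with $g_j$ as you define it, the telescoping identity fails. $\Phi(u_{j+1})-\Phi(0)$ carries an extra term $-\sum_{i=1}^{j}(S_i-S_{i-1})e_{i-1}$, which does not vanish in the limit. You need $E_{j-2}$ in place of $E_{j-1}$, i.e.\ $g_j$ determined by $\sum_{k\le j}g_k+S_jE_{j-1}=S_jg$.

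Second, \eqref{NineEleven} is available only for $a\in[a_1,a_2]$, so a bound at $a=0$ with rate $2^{-k\alpha}$ cannot be propagated. The induction lives on $[a_1,a_2]$, and low norms are controlled by monotonicity: $\|\dot u_k\|_{a_0}\le\|\dot u_k\|_{a_1}\lesssim\delta_k2^{k(a_1-\alpha)}$. This is why the hypothesis reads $\alpha>a_1+\beta/2$. The quadratic error has size $\|\dot u_j\|_{b+\mu}\|\dot u_j\|_{a_1}$ in $F_b$, it must beat the target rate $2^{j(a-\alpha)}$ at $b=a+\beta-\alpha$, and $\mu\le a_1$.

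Third, the regularity statement does not assume $\|g\|_{\beta+c}$ small, so a literal rerun of the induction on a shifted scale does not close. Instead, one proves for the same iterates that $\|\dot u_k\|_a\le K_c\delta_k^{(c)}2^{k(a-\alpha-c)}$ for $a\in[a_1,a_2+c]$, by a second induction. Thanks to the tame form of \eqref{NineTen} and \eqref{NineEleven}, each term has a single high-norm factor, multiplied by low-norm factors already made small by the first induction. The estimates are therefore linear in $K_c$ with a small coefficient, and the induction closes.
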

\textbf{Acknowledgments.}
Jiaqi Yang is supported by National Natural Science Foundation of China under Grant: 12471225.
\bibliography{Hydro}
\bibliographystyle{plain}
	
\end{document}